\def\Z {\mathbb{Z}}
\def\R {\mathbb{R}}
\def\im{\mathrm{i}}
\def\id{\mathrm{id}}
\def\h {\mathrm{H}}
\def\trivlin{\mathbf{I}}
\def\quand{\quad\text{ and }\quad}
\def\quomma{\quad\text{, }\quad}
\def\quith{\quad\text{ with }\quad}
\def\ev{\mathrm{ev}}
\def\hc#1{\mathrm{h}_{#1}}
\def\stackerf#1#2{\stackrel{\text{\erf{#1}}}{#2}}
\def\nobr{~\hspace{-0.26em}}
\def\maps{\nobr:\nobr}
\def\df{\nobr := \nobr}
\def\eq{\nobr = \nobr}
\def\pr{{\mathsf{pr}}}
\let\Oldin\in\renewcommand{\in}{\nobr\Oldin\nobr}
\let\Oldtimes\times\renewcommand{\times}{\nobr\Oldtimes}
\let\Oldotimes\otimes\renewcommand{\otimes}{\nobr\Oldotimes}
\newlength{\widthtmp}
\def\length#1{\settowidth{\widthtmp}{#1}\the\widthtmp}
\renewcommand{\varepsilon}{\epsilon}
\def\bigset#1#2{\left\lbrace\;\begin{minipage}[c]{#1}\begin{center}#2\end{center}\end{minipage}\;\right\rbrace}
\def\erf#1{(\ref{#1})}
\newlength{\myl}
\newcommand\sheaf[1]{\unitlength 0.1mm
  \settowidth{\myl}{$#1$}
  \addtolength{\myl}{-0.8mm}
  \begin{picture}(0,0)(0,0)
  \put(3,6){\text{\uline{\hspace{\myl}}}}
  \end{picture}#1\hspace{-0.15mm}}
\newcommand{\ueinssheaf}{\sheaf\ueins}
\newcommand{\ueins}{{\mathrm{U}}(1)}
\newcommand{\spin}[1]{{\mathrm{Spin}}\brackets{#1}}
\newcommand{\so}[1]{{\mathrm{SO}}\brackets{#1}}
\def\diff{\mathcal{D}\!i\!f\!\!f}
\def\triv#1{\mathcal{T}\!riv(#1)}
\def\trivcon#1{\mathcal{T}\!riv^{\nabla\!}(#1)}
\def\brackets#1{\IfStrEq{#1}{-}{}{(#1)}}
\def\subindex#1{\IfStrEq{#1}{-}{}{_{#1}}}
\def\buntech#1#2{\mathcal{B}\hspace{-0.01em}un_{\hspace{-0.1em}#1}^{#2}}
\def\grbtech#1{\mathcal{G}\hspace{-0.06em}r\hspace{-0.06em}b_{\hspace{-0.07em}{#1}}}
\def\grb#1#2{\grbtech#1\brackets{#2}}
\def\grbcon#1#2{\grbtech{#1}^{\nabla\!}\brackets{#2}}
\def\ugrb#1{\grb{\,}{#1}}
\newcommand{\alxydim}[2]{\begin{aligned}\xymatrix#1{#2}\end{aligned}}
\renewcommand{\to}{\nobr\!\xymatrix@R=0cm@C=1.4em{\ar[r] &}\nobr}
\renewcommand{\mapsto}{\!\xymatrix@R=0cm@C=1.4em{\ar@{|->}[r] &}\!}
\renewcommand{\Rightarrow}{\!\xymatrix@R=0cm@C=1.4em{\ar@{=>}[r] &}\!}
\renewcommand{\Leftarrow}{\!\xymatrix@R=0cm@C=1.4em{\ar@{<=}[r] &}\!}
\newcommand{\incl}{\!\xymatrix@R=0cm@C=1.4em{\ar@{^(->}[r] &}\!}
\renewcommand\Leftrightarrow{\!\xymatrix@R=0cm@C=1.4em{\ar@{<=>}[r] &}\!}
\newcounter{denseversion}
\newcounter{authorcounter}
\newcounter{adresscounter}
\def\title#1{\gdef\@title{#1}}
\def\@title{}
\def\subtitle#1{\gdef\@subtitle{#1}}
\def\@subtitle{}
\def\authortagsused{0}
\def\adresstag#1{\if!#1!\else$^{\;#1\;}$\fi}
\renewcommand{\author}[2][]{
  \stepcounter{authorcounter}
  \if!#1!\else\gdef\authortagsused{1}\fi
  \ifnum\value{authorcounter}=1
    \def\@authorstringa{#2\adresstag{#1}}
    \def\@authorstringb{#2}
    \def\@authorstringc{#2\adresstag{#1}}
  \else
    \g@addto@macro\@authorstringa{\ and #2\adresstag{#1}}
    \g@addto@macro\@authorstringb{\ and #2}
    \g@addto@macro\@authorstringc{\\#2\adresstag{#1}}
  \fi}
\def\@author{\ifnum\value{denseversion}=0\@authorstringa\else\@authorstringb\fi}
\def\@adressstringa{}
\def\@adressstringb{}
\newcommand{\adress}[2][]{
  \stepcounter{adresscounter}
  \ifnum\value{adresscounter}=1
    \g@addto@macro\@adressstringa{\ifnum\authortagsused=0\def\br{\\}\else\def\br{, }\fi\adresstag{#1}#2}
    \g@addto@macro\@adressstringb{\def\br{\\}\adresstag{#1}\parbox[t]{14cm}{#2}}
  \else
    \g@addto@macro\@adressstringa{\\[\bigskipamount]\adresstag{#1}#2}
    \g@addto@macro\@adressstringb{\\[\medskipamount]\adresstag{#1}\parbox[t]{14cm}{#2}}
  \fi}
\def\@adress{\ifnum\value{denseversion}=0\@adressstringa\else\@adressstringb\fi}
\def\preprint#1{\gdef\@preprint{#1}}
\def\@preprint{}
\def\keywords#1{\gdef\@keywords{#1}}
\def\@keywords{}
\def\msc#1{\gdef\@msc{#1}}
\def\@msc{}
\def\email#1{
   \gdef\@email{#1}
   \g@addto@macro\@authorstringc{ {\it (#1)}}}
\def\@email{}
\def\dedication#1{\gdef\@dedication{#1}}
\def\@dedication{}
\def\mybaselinestretch#1{\gdef\@mybaselinestretch{#1}}
\def\@mybaselinestretch{}
\def\refname{References}
\renewcommand{\baselinestretch}{\@mybaselinestretch}
\def\denseversion{
  \setcounter{denseversion}{1}
  \newgeometry{left=3cm,right=3cm,top=3cm}
  \mybaselinestretch{1.1}
  \renewcommand{\baselinestretch}{\@mybaselinestretch}
  \normalfont
  \fancyfoot[C]{\itshape{\hspace{2.5cm}--$\,\,$\thepage$\,\,$--}}}
\newlength{\myparskip}
\newlength{\myproofparskip}
\renewcommand{\emph}[1]{\def\reserved@a{it}\ifx\f@shape\reserved@a\uline{#1}\else\textit{#1}\fi}
\newcommand{\mytableofcontents}{
   \ifnum\value{denseversion}=0
     \tableofcontents
   \else
     \renewcommand{\baselinestretch}{0.8}
     \normalfont
     \tableofcontents
     \renewcommand{\baselinestretch}{\@mybaselinestretch}
     \normalfont
   \fi}
\newlength{\zeilenlaenge}
\def\putindent#1{
  \settowidth{\zeilenlaenge}{#1}
  \ifnum\zeilenlaenge>\textwidth
    #1
  \else
    \noindent #1
  \fi
}
\def\href#1#2{#2}
\def\kohyp{
  \usepackage{hyperref}
  \hypersetup{
    linktocpage = true,
    pdftitle = {\@title},
    pdfauthor = {\@author},
    pdfkeywords = {\@keywords},    
    bookmarksopen = true,
    bookmarksopenlevel = 1
  }}  
\def\showkeywords{\begin{flushleft}\footnotesize\textbf{Keywords}: \@keywords\end{flushleft}}
\def\showmsc{\begin{flushleft}\footnotesize\textbf{MSC 2010}: \@msc\end{flushleft}}
\newcounter{mythm}[subsection]
\newcounter{mainthm}
\def\setsecnumdepth#1{
  \setcounter{secnumdepth}{#1}
  \setcounter{mythm}{0}
  \ifnum \c@secnumdepth >0
    \ifnum \c@secnumdepth >1
      \def\themythm{\thesubsection.\arabic{mythm}}
      \numberwithin{equation}{subsection}
      \renewcommand\theequation{\thesubsection.\arabic{equation}}
    \else
      \def\themythm{\thesection.\arabic{mythm}}
      \numberwithin{equation}{section}
      \renewcommand\theequation{\thesection.\arabic{equation}}
    \fi
  \else
    \def\themythm{\arabic{mythm}}
  \fi}
\newenvironment{mythmenv}{\strut\ \setlength{\parskip}{\myproofparskip}}{\setlength{\parskip}{\myparskip}}
\newlength{\mythmskip}
\newlength{\mythmtopskip}
\newtheoremstyle{mythmstylea}{\mythmtopskip}{\mythmskip}{\it}{}{\bf}{.}{0em}{}
\newtheoremstyle{mythmstyleb}{\mythmtopskip}{\mythmskip}{}{}{\bf}{.}{0em}{}
\theoremstyle{mythmstylea}
\newtheorem{mytheorem}[mythm]{Theorem}
\newtheorem{mydefinition}[mythm]{Definition}
\newtheorem{mycorollary}[mythm]{Corollary}
\newtheorem{myproposition}[mythm]{Proposition}
\newtheorem{mylemma}[mythm]{Lemma}
\newtheorem{mymaintheorem}[mainthm]{Theorem}
\newtheorem{mymaincorollary}[mainthm]{Corollary}
\newtheorem{mymainproposition}[mainthm]{Proposition}
\newtheorem{mymaindefinition}[mainthm]{Definition}
\theoremstyle{mythmstyleb}
\newtheorem{myremark}[mythm]{Remark}
\newtheorem{myexample}[mythm]{Example}
\newtheorem{myexercise}[mythm]{Exercise}
\newenvironment{theorem}[1][]{\begin{mytheorem}[#1]\begin{mythmenv}}{\end{mythmenv}\end{mytheorem}}
\newenvironment{definition}[1][]{\begin{mydefinition}[#1]\begin{mythmenv}}{\end{mythmenv}\end{mydefinition}}
\newenvironment{corollary}[1][]{\begin{mycorollary}[#1]\begin{mythmenv}}{\end{mythmenv}\end{mycorollary}}
\newenvironment{proposition}[1][]{\begin{myproposition}[#1]\begin{mythmenv}}{\end{mythmenv}\end{myproposition}}
\newenvironment{lemma}[1][]{\begin{mylemma}[#1]\begin{mythmenv}}{\end{mythmenv}\end{mylemma}}
\newenvironment{remark}[1][]{\begin{myremark}[#1]\begin{mythmenv}}{\end{mythmenv}\end{myremark}}
\newenvironment{example}[1][]{\begin{myexample}[#1]\begin{mythmenv}}{\end{mythmenv}\end{myexample}}
\renewenvironment{proof}[1][Proof]{\noindent #1. \begin{mythmenv}}{\hfill$\square$\end{mythmenv}\medskip}
\def\mytitle{}
\def\zmptitle{
  \begin{tabular}{cc}
    \begin{minipage}[c]{0.4\textwidth}
      \begin{flushleft}
        \includegraphics[width=110pt]{../../tex/zmp}
      \end{flushleft}  
    \end{minipage}&
    \begin{minipage}[c]{0.55\textwidth}
      \begin{flushright}
      {\small\sf\@preprint}
      \end{flushright}
    \end{minipage}
  \end{tabular}
  \vskip 2cm}
\def\maketitle{
  \setlength{\parskip}{\myparskip}  
  \newpage
  \noindent
  \mytitle
  \begin{center}
    \LARGE\@title\\
    \if!\@subtitle!\else \smallskip\LARGE\@subtitle\\\fi
    \bigskip
    \if!\@author!\else\bigskip\large\@author\\\fi
    \ifnum\value{denseversion}=0
      \if!\@adress!\else     \bigskip\normalsize\@adress\\\fi
      \if!\@email!\else\ifnum\value{authorcounter}=1\bigskip\normalsize\textit{\@email}\\\else\fi\fi
    \else
    \fi
    \if!\@dedication!\else \bigskip\normalsize{\@dedication}\\\fi
  \end{center}
  \ifnum\value{denseversion}=0\vskip 1.5cm\else\vskip0.5cm\fi
  \thispagestyle{empty}}
\def\kobiburl#1{
   \IfBeginWith
     {#1}
     {http://arxiv.org/abs/}
     {\kobibarxiv{#1}}
     {\kobiblink{#1}}}
\def\kobibarxiv#1{\href{#1}{\texttt{[arxiv:\StrGobbleLeft{#1}{21}]}}}
\def\kobiblink#1{Available as: \href{#1}{\texttt{\StrSubstitute{#1}{_}{\underline{\;\;}}}}}
\newcommand{\etalchar}[1]{$^{#1}$}
\def\kobib#1{
  \begin{raggedright}
  \ifnum\value{denseversion}=0\else\small\fi

  \end{raggedright}
  \ifnum\value{denseversion}=0\else
      \noindent
      \if!\@authorstringc!\else
        \ifnum\authortagsused=0\ifnum\value{authorcounter}>1\normalsize\@authorstringc\\[\medskipamount]\else\fi\else\normalsize\@authorstringc\\[\medskipamount]\fi
      \fi
      \if!\@adress!\else\normalsize\@adress\\{}\fi
      \ifnum\authortagsused=0
        \ifnum\value{authorcounter}=1
          \if!\@email!\else\linebreak\normalsize\textit{\@email}\\{}\fi
        \else
        \fi
      \else
      \fi
  \fi
  }
\newenvironment{commentfigure}{}
\newenvironment{sidewayscommentfigure}{\begin{minipage}}{\end{minipage}}
\def\showcomments{ -- Comments suppressed}
\newif\if@fewtab\@fewtabtrue{
  \count255=\time\divide\count255 by 60
  \xdef\hourmin{\number\count255}
  \multiply\count255 by-60\advance\count255 by\time
  \xdef\hourmin{\hourmin:\ifnum\count255<10 0\fi\the\count255}}
\def\ps@draft{
  \let\@mkboth\@gobbletwo
  \def\@oddfoot{
    \hbox to 7 cm{\tiny \versionno\hfil}
    \hskip -7cm\hfil\rm\thepage\hfil{\tiny\draftdate}}
  \def\@oddhead{}
  \def\@evenhead{}
  \let\@evenfoot\@oddfoot}
\def\draftdate{\number\month/\number\day/\number\year\ \ \ \hourmin }
\newcommand\version[1]{
  \typeout{}\typeout{#1}\typeout{}
  \vskip-1.7cm \centerline{\fbox{{\normalsize\tt DRAFT -- #1 -- 
  \draftdate\showcomments}}} \vskip0.92cm}
\def\draft#1{
  \def\versionno{#1}
  \pagestyle{draft}\thispagestyle{draft}
  \gdef\@ntitle{\version\versionno \@title}
  \global\def\draftcontrol{1}}
\global\def\draftcontrol{0}
\def\p{P}
\def\ev{\mathrm{ev}}
\def\hc#1{\mathrm{h}_{#1}}
\def\pcomp{\nobr\star\nobr}
\def\prev#1{\overline{#1}}
\def\un{\mathscr{R}}
\def\tr{\mathscr{T}}
\def\fusbun#1#2{\mathcal{F}\!us\buntech#1{}(#2)}
\def\ufusbun#1{\mathcal{F}\!us\buntech{}{}(#1)}
\def\struc#1#2{#1\text{-}\mathcal{L}\!i\!f\!t(#2)}
\def\quot#1{``#1''}
\title{\Large Spin structures on loop spaces that characterize string manifolds}
\author{Konrad Waldorf}
\email{konrad.waldorf@mathematik.uni-regensburg.de}
\keywords{}
\def\lspin#1{L\spin #1}
\def\lspinhat#1{\widetilde{L\spin #1}}
\def\inf#1{\EuScript{#1}}
\def\trivfus#1{\mathcal{F}\!u\!s\!\triv{#1}}
\def\fusstruc#1#2{#1\text{-}\mathcal{F}\!u\!s\mathcal{L}i\!f\!t(#2)}
\def\lop{\cup\,}
\def\generator{\gamma_{can}}
\def\wzwmodel{\inf L}
\def\diffr{\delta}
\def\gbas{\mathcal{G}_{bas}}
\def\adjust#1{\!\!\!\begin{tabular}{c}$#1$\end{tabular}\!\!\!}
\begin{document}


\maketitle

\begin{abstract}
Classically, a spin structure on the loop space of a  manifold is  a lift of the structure group of the looped  frame bundle from the loop group  to its universal central extension. Heuristically, the loop space of a  manifold is spin if and only if the manifold itself is a string manifold, against which it is well-known that only the if-part is   true in general. In this article we develop a new version of spin structures on loop spaces that exists if and only if the manifold is string, as desired. This new version consists of a classical spin structure plus a certain fusion product related to loops of frames in the manifold. We use the lifting gerbe theory of Carey-Murray, recent results of Stolz-Teichner on loop spaces,   and some own results about string geometry and Brylinski-McLaughlin transgression.

\end{abstract}

\mytableofcontents


\setsecnumdepth{1}

\section{Introduction}

\label{sec:motivation}

The Witten genus has been introduced by Witten using supersymmetric sigma models with target space a spin manifold $M$, and $S^1$-equivariant Dirac operators on the free loop space $LM$ \cite{witten2}. Although the Witten genus is  well-defined  (a power series in the Pontryagin numbers of $M$) the approach via loop space geometry still lacks a rigorous understanding.

Dirac operators can be considered on manifolds with a  spin structure, i.e. with a lift of the structure group of the frame bundle to its universal covering group. The frame bundle of the loop space of an $n$-dimensional spin manifold $M$ is an $\lspin n$-bundle; Killingback defined \cite{killingback1} a spin structure on $LM$ to be a lift of its structure group to the universal central extension
\begin{equation}
\label{eq:centralextension0}
1 \to \ueins \to \lspinhat n \to \lspin n \to 1\text{.}
\end{equation} 
Killingback showed that the existence of spin structures on $LM$ is obstructed by a class $\lambda_{LM}\in \h^3(LM,\Z)$. This class is often called the \emph{string class of $M$}; however, in view of the following discussion we better call it  the \emph{spin class of $LM$}.

Killingback showed\footnote{In fact, Killingback defers the proof to a paper \quot{in preparation} that I was not able to find. Since that time, the statement seems to be \quot{well-known}, and I do not  know what the earliest reference for a proof is.  Just to be sure, one proof is given in \cite[Section 7]{Nikolausa}; see Theorem \ref{th:liftingtransgression} in the present paper.  } that the class $\lambda_{LM}$ is the transgression of the first fractional Pontryagin class of $M$, $\frac{1}{2}p_1(M) \in \h^4(M,\Z)$, i.e. the image of $\frac{1}{2}p_1(M)$ under the homomorphism
\begin{equation*}
\tau\maps \h^4(M,\Z) \to \h^3(LM,\Z):x \mapsto \int_{S^1} \ev^{*}x \text{,}
\end{equation*}
where $\ev: S^1 \times LM \to M$ is the evaluation map.
Further results about the relation between the classes $\lambda_{LM}$ and $\frac{1}{2}p_1(M)$ have been obtained by Pilch-Warner \cite{Pilch1988} (see below),  Carey-Murray \cite{carey7} (for the based loop space), and McLaughlin \cite{mclaughlin1} (for simply-connected manifolds).

Spin manifolds with vanishing first fractional Pontryagin class,  $\frac{1}{2}p_1(M)=0$, are called \emph{string manifolds}. Given the relation
\begin{equation*}
\textstyle\tau(\frac{1}{2}p_1(M)) = \lambda_{LM}
\end{equation*}
it is clear that the loop space of a string manifold in  spin. In this article we are concerned with the converse proposition: is a manifold string when its loop space is spin? 
We recall three  seminal results concerning this question.
With methods of algebraic topology,  McLaughlin showed the following.

\begin{theorem}[{{\cite{mclaughlin1}}}]
Suppose $M$ is a 2-connected spin manifold of dimension greater than $5$. Then, $LM$ is spin if and only if $M$ is string. 
\end{theorem}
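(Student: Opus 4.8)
The plan is to reduce everything to the injectivity of the transgression $\tau$ on $\h^4(M,\Z)$. From the discussion preceding the statement, $LM$ carries a spin structure if and only if the spin class $\lambda_{LM}\in\h^3(LM,\Z)$ vanishes, $M$ is string if and only if $\tfrac12 p_1(M)\in\h^4(M,\Z)$ vanishes, and $\lambda_{LM}=\tau(\tfrac12 p_1(M))$ (Killingback; see Theorem~\ref{th:liftingtransgression}). The ``if''-part is then immediate: $\tfrac12 p_1(M)=0$ gives $\lambda_{LM}=\tau(0)=0$, so $LM$ is spin. For the ``only if''-part I must deduce $\tfrac12 p_1(M)=0$ from $\tau(\tfrac12 p_1(M))=0$, so it suffices to prove that $\tau\maps\h^4(M,\Z)\to\h^3(LM,\Z)$ is injective under the stated hypotheses.

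To control $\tau$ I would compare it with the cohomology suspension of the based loop space. Let $\Omega M$ denote the based loops and $i\maps\Omega M\to LM$ the inclusion. Restricting $\ev\maps S^1\times LM\to M$ to $S^1\times\Omega M$ yields a map that is constant on $(\{*\}\times\Omega M)\cup(S^1\times\{*\})$ and therefore factors through the reduced suspension $\Sigma\Omega M\to M$, which is the counit of the loop--suspension adjunction. Since $\tau(x)=\int_{S^1}\ev^*x$ is fibre integration over $S^1$, this identifies $i^*\tau=\pm\,\sigma$, where $\sigma\maps\h^4(M,\Z)\to\h^3(\Omega M,\Z)$ is the cohomology suspension. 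Hence injectivity of $\sigma$ forces injectivity of $\tau$, and it is enough to show $\sigma$ is injective in degree four.

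For this I would run the Serre spectral sequence $E_2^{p,q}=\h^p(M;\h^q(\Omega M))$ of the path fibration $\Omega M\to PM\to M$, whose total space $PM$ is contractible, so every entry of positive total degree dies at $E_\infty$. The suspension is, up to sign, inverse to the transgression $d_4\maps E_4^{0,3}\to E_4^{4,0}$. Here $2$-connectivity of $M$ is decisive: it gives $\h^1(M)=\h^2(M)=0$ and makes $\Omega M$ simply connected, so the intervening entries on the total-degree-$3$ and total-degree-$4$ diagonals vanish. A short bookkeeping then shows that no differential other than $d_4$ meets the corners $(0,3)$ and $(4,0)$; thus $E_4^{0,3}=\h^3(\Omega M,\Z)$ and $E_4^{4,0}=\h^4(M,\Z)$, and since both corners must die, $d_4$ is forced to be an isomorphism. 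In particular $\sigma$, and with it $\tau$, is injective in degree four, completing the ``only if''-part. (Equivalently, one invokes the suspension theorem: for a $k$-connected space the cohomology suspension is an isomorphism through degree $2k$, and here $k=2$, $4=2k$.)

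I expect the genuinely delicate point to be the comparison $i^*\tau=\pm\sigma$: one must check that fibre integration over the free-loop evaluation really restricts on based loops to the classical suspension, keeping track of the basepoint conventions and the sign, and verify that no part of the relevant image is lost under $i^*$. The cohomological heart of the argument uses only that $M$ is $2$-connected --- this is what puts the suspension isomorphism in the critical degree four and, incidentally, rules out decomposable classes in $\h^4(M,\Z)$ on which $\sigma$ would vanish automatically. The dimension bound $\dim M>5$ is not needed for this cohomological injectivity; it rather secures the underlying $\lspin n$ frame-bundle set-up and Killingback's identification $\lambda_{LM}=\tau(\tfrac12 p_1(M))$ on which the reduction rests.
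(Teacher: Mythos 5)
Your proof is correct, but there is nothing in the paper to compare it against: this statement is quoted as background from McLaughlin \cite{mclaughlin1}, and the paper gives no proof of it (its own contribution, Theorem \ref{th:main}, concerns the case where simple connectivity fails, via fusion products). Your argument is the standard algebraic-topology one, and in substance the argument of the cited source: reduce to injectivity of $\tau$ on $\h^4(M,\Z)$ using the identification $\lambda_{LM}=\tau(\tfrac{1}{2}p_1(M))$ together with the fact that $\lambda_{LM}$ is the full obstruction; identify $i^{*}\circ\tau$ with the cohomology suspension $\sigma$ via the counit $\Sigma\Omega M \to M$ (fibre integration over $S^1$ inverts the suspension isomorphism, so this \quot{delicate point} is indeed standard and signs are irrelevant for injectivity); and then force $d_4\maps E_4^{0,3}\to E_4^{4,0}$ to be an isomorphism in the Serre spectral sequence of the contractible path fibration. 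Your bookkeeping is right: $2$-connectedness of $M$ (hence simple connectivity of $\Omega M$) kills every entry strictly between the corners in total degrees $3$ and $4$, so $E_4^{0,3}=\h^3(\Omega M,\Z)$, $E_4^{4,0}=\h^4(M,\Z)$, and $d_4$ is an isomorphism, giving injectivity of $\sigma$ and hence of $\tau$ in degree $4$. Your reading of the hypotheses is also consistent with the paper's framework: the dimension restriction is what makes $\spin n$ simple, connected and simply connected, so that the universal central extension of $\lspin n$ and the identification $\lambda_{LM}=\tau(\tfrac{1}{2}p_1(M))$ (cf.\ Theorem \ref{th:liftingtransgression}) are available, while the cohomological injectivity uses only $2$-connectedness.
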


Similarly, but with more advanced methods using Hochschild cohomology, Kuribayashi and Yamaguchi\- proved that the assumption of 2-connectedness may be replaced by a  condition that admits  non-trivial $\pi_2$. 

\begin{theorem}[{{\cite{Kuribayashi1998}}}]
Let $M$ be a simply-connected smooth manifold. Suppose $M$ is 4-dimensional, or $M$ has the structure of  a compact homogenous  space, or $M$ is a product of such. Then, $LM$ is spin if and only if $M$ is string. 
\end{theorem}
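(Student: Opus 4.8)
Since $M$ is string exactly when $\frac{1}{2}p_1(M)=0$ and $LM$ is spin exactly when $\lambda_{LM}=\tau(\frac{1}{2}p_1(M))=0$, and since the passage from $\frac{1}{2}p_1(M)=0$ to $\tau(\frac{1}{2}p_1(M))=0$ is automatic, the whole statement is equivalent to the assertion that $\tau\maps\h^4(M,\Z)\to\h^3(LM,\Z)$ does not annihilate $\frac{1}{2}p_1(M)$ unless that class already vanishes. The plan is therefore to analyse the kernel of $\tau$ on $\h^4(M,\Z)$, first rationally and then integrally.

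First I would show that $\tau\otimes\Q$ is injective on $\h^4(M,\Q)$ for \emph{every} simply-connected $M$. The essential input is the behaviour of $\tau$ on cup products. Writing $\ev^* x=1\times\ev_0^* x+\sigma\times\tau(x)$ in $\h^*(S^1\times LM,\Q)$, where $\sigma$ generates $\h^1(S^1,\Q)$ and $\ev_0\maps LM\to M$ is evaluation at the basepoint, and using $\sigma^2=0$, one obtains the Leibniz rule
\begin{equation*}
\tau(x\cup y)=\ev_0^* x\cup\tau(y)+(-1)^{|x|}\tau(x)\cup\ev_0^* y.
\end{equation*}
For simply-connected $M$, $\h^4(M,\Q)$ is spanned by its indecomposables together with the image of $\h^2\otimes\h^2$ under cup product. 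Passing to the Sullivan model $\Lambda V\otimes\Lambda\bar V$ of $LM$, in which $\bar V$ is a degree-lowered copy of $V$ and $\tau$ is the canonical degree $-1$ derivation $v\mapsto\bar v$, one checks that $\tau$ sends a degree-four indecomposable generator to a non-exact degree-three generator $\bar v$, and that the Leibniz rule sends a product $u\cup v$ of degree-two classes to $\ev_0^* u\cup\tau(v)+\tau(u)\cup\ev_0^* v$, whose only linear relations come precisely from the relations among the products $u\cup v$ in $\h^4(M,\Q)$. The two families of classes lie in different monomial types of the model and do not interact, so $\tau\otimes\Q$ is injective on $\h^4(M,\Q)$. (This uses the free loop space essentially: on based loops the decomposable part of $\tau$ vanishes, since $\ev_0^*$ dies there.)

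By the previous step the kernel of $\tau$ on $\h^4(M,\Z)$ is contained in the torsion subgroup, so what remains is to exclude that $\frac{1}{2}p_1(M)$ is a nonzero torsion class annihilated by $\tau$; this is where the three hypotheses enter. If $M$ is a simply-connected $4$-manifold, then $\h^4(M,\Z)$ is torsion-free (isomorphic to $\Z$ in the closed case), so rational injectivity already finishes the argument. If $M$ is a compact simply-connected homogeneous space, I would compute $\h^3(LM,\mathbb{F}_p)$ and the mod-$p$ reduction of $\tau$ from the Hochschild model of the free loop space, which is tractable because the cochain algebra of $G/H$ is formal with explicit cohomology ring; showing that $\frac{1}{2}p_1(M)$ stays outside the kernel of $\tau$ modulo every prime rules out a torsion kernel. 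Finally, if $M=M_1\times M_2$, then $LM\cong LM_1\times LM_2$, $\frac{1}{2}p_1(M)=\frac{1}{2}p_1(M_1)\times 1+1\times\frac{1}{2}p_1(M_2)$, and under Künneth the two summands of $\tau(\frac{1}{2}p_1(M))$ land in different factors; hence the statement for $M_1$ and $M_2$ implies it for the product.

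The rational step and the $4$-manifold and product cases are essentially formal, so the real obstacle is the homogeneous case: one must show that $\frac{1}{2}p_1(M)$ survives transgression after reduction modulo \emph{every} prime. This forces a genuinely mod-$p$ understanding of $\h^3(LM,\mathbb{F}_p)$ and of $\tau$, for which the Hochschild-cohomology computation---exploiting the formality and explicit cohomology of $G/H$---is the natural and apparently indispensable tool. Pinning down the possible $p$-torsion in $\h^4(G/H,\Z)$ and verifying that $\frac{1}{2}p_1$ is neither carried by it nor killed by $\tau$ is the crux of the whole theorem.
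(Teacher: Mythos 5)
First, a point about the comparison you were asked to be judged against: this theorem is not proved in the paper at all. It appears in the introduction as an imported result, with the proof deferred entirely to the citation \cite{Kuribayashi1998}, and the paper only records that Kuribayashi and Yamaguchi use \quot{more advanced methods using Hochschild cohomology}. So there is no in-paper argument to match your proposal against; the honest comparison is with that reference, and your plan for the hard case coincides in spirit with what the paper attributes to it.

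On the mathematics of your proposal: the reduction of the theorem to the statement that $\tau$ does not annihilate a nonzero $\frac{1}{2}p_1(M)$ is correct, and your rational step is sound and verifiable --- in the Vigu\'e-Poirrier--Sullivan model of $LM$ the differential preserves word length in the barred generators, and a direct check in word length one shows that $\tau\otimes\Q$ is injective on $\h^4(M,\Q)$ for simply-connected $M$; the $4$-manifold case (where $\h^4(M,\Z)$ is torsion-free) and the product case (restrict $\tau(\frac{1}{2}p_1(M))$ to $LM_1\times\{\text{const}\}$ and $\{\text{const}\}\times LM_2$) then follow. But there are two genuine gaps. First, the homogeneous case is not proved: \quot{I would compute $\h^3(LM,\mathbb{F}_p)$ from the Hochschild model} is a plan, not an argument, and as you yourself admit this case is the crux --- it is precisely the content of \cite{Kuribayashi1998}, so your proposal is incomplete exactly where the theorem is nontrivial. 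Second, the logic by which mod-$p$ computations are supposed to exclude integral torsion in $\ker\tau$ is flawed as stated: injectivity of the mod-$p$ reduction of $\tau$ for \emph{every} prime $p$ does not rule out a nonzero torsion class in the kernel of the integral $\tau$, because a nonzero torsion class can have vanishing mod-$p$ reduction (if $x=py$ with $y$ of order $p^2$, then $x\neq 0$ but $x$ maps to $0$ in $\h^4(M,\Z/p\Z)$). To convert mod-$p$ statements into the integral statement you need $\Z/p^k$-coefficient or Bockstein refinements, or a proof that $\h^4(M,\Z)$ of the relevant homogeneous spaces has no torsion at the primes in question --- none of which is supplied.
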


In contrast to the previous two results,  Pilch and Warner have shown in pioneering work that the assumption of simply-connectedness cannot not be dropped  (at least not when instead of the frame bundle  a general principal $\spin n$-bundle $P$ is considered).

\begin{theorem}[{{\cite{Pilch1988}}}]
\label{th:pilchwarner}
There exists a non-simply connected smooth manifold $M$ and a principal $\spin n$-bundle $P$ over $M$ such that $\frac{1}{2}p_1(P)\neq 0$ and $LP$ is spin. \end{theorem}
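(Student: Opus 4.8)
The plan is to reduce the existence statement to a single cohomological computation and then exhibit an explicit lens space as the manifold $M$.

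\textbf{Step 1 (Reduction to transgression).} First I would observe that, exactly as for the looped frame bundle in Theorem \ref{th:liftingtransgression}, the looped bundle $LP \to LM$ is a principal $\lspin n$-bundle, and a spin structure on it is a lift of its structure group along the central extension \erf{eq:centralextension0}. By the lifting gerbe theory of Carey--Murray together with the lifting-transgression relation, such a lift exists if and only if the obstruction class $\tau(\tfrac{1}{2}p_1(P)) \in \h^3(LM,\Z)$ vanishes, where $\tfrac{1}{2}p_1(P)$ is the pullback of the generator of $\h^4(B\mathrm{Spin},\Z) \cong \Z$ along a classifying map. Thus the theorem follows once I produce a non-simply connected $M$ and a class $x \in \h^4(M,\Z)$ with $x \neq 0$ but $\tau(x) = 0$, realised as $x = \tfrac{1}{2}p_1(P)$ for some bundle $P$.

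\textbf{Step 2 (The example and its realisation).} I would take $M$ to be the $5$-dimensional lens space $L \df S^5/(\Z/k)$ for some $k \geq 2$, so that $\pi_1(M) \cong \Z/k$, $M$ is non-simply connected, and $\h^4(M,\Z) \cong \Z/k$. Let $f\maps M \to B(\Z/k)$ classify the universal cover. Since $B(\Z/k)$ is obtained from $M$ by attaching cells of dimension $\geq 6$, the map $f^*\maps \h^4(B(\Z/k),\Z) \cong \Z/k \to \h^4(M,\Z) \cong \Z/k$ is an isomorphism; let $x \df f^*\bar x \neq 0$ be the image of a generator $\bar x$. To realise $x$ by an actual bundle I would use that $\mathrm{Spin}$ is $2$-connected with $\pi_3(\mathrm{Spin}) \cong \Z$ and $\pi_4(\mathrm{Spin}) = \pi_5(\mathrm{Spin}) = \pi_6(\mathrm{Spin}) = 0$, so the map $B\mathrm{Spin} \to K(\Z,4)$ classifying $\tfrac{1}{2}p_1$ has highly connected homotopy fibre; as $\dim M = 5$ this makes $[M,B\mathrm{Spin}] \to \h^4(M,\Z)$ a bijection. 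Hence there is a stable spin bundle, i.e.\ a principal $\spin n$-bundle $P$ for $n$ large, with $\tfrac{1}{2}p_1(P) = x \neq 0$, so $P$ is not string.

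\textbf{Step 3 (Vanishing of the transgression).} The crux is to check $\tau(x) = 0$. The evaluation maps are compatible, $f \circ \ev = \ev \circ (\id \times Lf)$, and integration over $S^1$ is natural in maps fixing the $S^1$-factor, so $\tau$ is natural and $\tau(x) = \tau(f^*\bar x) = (Lf)^*\tau_{B(\Z/k)}(\bar x)$. Now for the discrete abelian group $\Z/k$ the free loop space splits, $L B(\Z/k) \simeq \coprod_{g\in\Z/k} B(\Z/k)$, indexed by conjugacy classes, every centraliser being all of $\Z/k$. Since $\h^3(B(\Z/k),\Z) = 0$, this forces $\h^3(LB(\Z/k),\Z) = 0$, whence $\tau_{B(\Z/k)}(\bar x) = 0$ and therefore $\tau(x) = 0$. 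By Step 1, $LP$ is then spin, which proves the theorem.

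The step I expect to be the main obstacle is Step 3: pinning down the homotopy type of $LB(\Z/k)$, equivalently controlling $\h^3$ of the free loop space of a $K(\pi,1)$, and checking the naturality of $\tau$ carefully enough that the vanishing computed on $B(\Z/k)$ transports back to $M$. The realisation in Step 2 is routine obstruction theory given the connectivity of $B\mathrm{Spin} \to K(\Z,4)$, but one must make sure that a genuinely \emph{torsion} class is realised as $\tfrac{1}{2}p_1$; the dimension bound $\dim M = 5$ is exactly what guarantees this. (If one also wants $M$ itself to be spin, it suffices to take $k$ odd.)
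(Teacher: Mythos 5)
There is nothing in the paper to compare your argument against: Theorem \ref{th:pilchwarner} is quoted from Pilch--Warner \cite{Pilch1988} and is never proved in this article; it serves only to motivate the introduction of fusion products. Judged on its own, your proof is correct, and it meshes well with the paper's machinery. Step 1 is exactly Killingback's theorem in the form the paper supplies it: combining Murray's lifting theory (Theorem \ref{th:lifting}) with Theorem \ref{th:liftingtransgression} and Proposition \ref{prop:transgression2gerbesign}, after choosing a connection on $P$ and taking $\mathcal{G}=\gbas$, the obstruction to lifting $LP$ along \erf{eq:centralextension0} is $\pm\tau(\mathrm{cc}(\mathbb{CS}_P(\gbas)))=\pm\tau(\tfrac{1}{2}p_1(P))$, where the identity $\mathrm{cc}(\mathbb{CS}_P(\mathcal{G}))=\tfrac{1}{2}p_1(P)$ for arbitrary principal $\spin n$-bundles is in \cite{carey4}. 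Step 2 is right, and your dimension count is the essential point: $\mathrm{String}$ being $6$-connected makes $B\mathrm{Spin}\to K(\Z,4)$ an $8$-connected map, so $[M,B\mathrm{Spin}]\to\h^4(M,\Z)$ is bijective for the $5$-dimensional lens space, every torsion class is realized, and compactness lets the classifying map factor through $B\spin N$ for finite $N$. In Step 3 you do not even need to compare smooth and continuous loop spaces up to homotopy: the square relating $\ev$ on smooth loops in $M$ to $\ev$ on continuous loops in $B(\Z/k)$ commutes strictly, the slant product with $[S^1]$ is natural, and $\h^3(LB(\Z/k),\Z)=0$ by your decomposition of $LB(\Z/k)$ into a disjoint union of copies of $B(\Z/k)$ (the Borel construction of the trivial conjugation action); hence $\tau$ vanishes identically on $\h^4(M,\Z)\cong\Z/k$ and $LP$ is spin. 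Compared with the original Pilch--Warner argument, which proceeds through explicit orbifold and loop-group-extension computations, your route isolates the underlying mechanism --- $\h^4$-classes pulled back from $B\pi_1(M)$ are torsion and are annihilated by transgression because the free loop space of $B\pi$ has vanishing $\h^3$ for $\pi$ finite cyclic --- in purely homotopy-theoretic terms, which is precisely what allows it to be grafted onto the lifting-gerbe formalism of this paper.
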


The situation that the loop space of a spin manifold $M$ is spin while $M$ is not string occurs evidently  if the transgression homomorphism $\tau$ is not injective. Loosely speaking,  the passage to the loop space \emph{loses information}. The question is how this lost information can be restored on the loop space side. On a geometrical level, this means to add additional structure to spin structures on loop spaces. And just to come back to the Dirac operators on the loop space: such operators could then be required to \emph{preserve} this additional structure.

Witten proposes that spin structures on the loop space have to be \emph{equivariant} with respect to the rotation action of $S^1$ on $LM$, and correspondingly considers $S^1$-equivariant Dirac operators  \cite{witten2}. This leads to  $S^1$-equivariant index theory on loop spaces; see, e.g.,  \cite{Alvarez1987,Alvarez1987a}. The addition of $S^1$-equivariance, or more general, equivariance under the group $\diff^{+}(S^1)$ of orientation-preserving diffeomorphisms of $S^1$, indeed eliminates the  counterexample of Theorem \ref{th:pilchwarner}, as proved in \cite{Pilch1988}. In general, however, it is, to my best knowledge, not known whether a manifold is string if and only if its loop space has a $\diff^{+}(S^1)$-equivariant spin structure.

One of the problems with $\diff^{+}(S^1)$-equivariance is that  $\diff^{+}(S^1)$ is connected and hence acts separately on each connected component of $LM$. Assuming for a moment that $M$ is connected, these components are labelled by the fundamental group $\pi_1(M)$, so that the obstruction  against lifting a spin structure on $LM$ to a $\diff^{+}(S^1)$-equivariant spin structure splits into $|\pi_1(M)|$ many unrelated obstructions. 

In this article we introduce a new additional structure for spin structures on loop spaces that in particular establishes a relation between  the separate spin structures on different connected components of $LM$.  This new additional structure is called \emph{fusion product}, and the corresponding spin structures are called \emph{fusion spin structures}; see Definition \ref{def:fusionspinstructure}. For the convenience of the reader let me summarize the main idea of a fusion product. 

 In a more general context, fusion products  are defined for $\ueins$-bundles over loop spaces \cite{waldorf10}. A fusion product defines a relation between the fibres of the bundle over the three loops that emerge from three paths connecting a common initial point with a common end point. In particular, since these three loops may be elements of different connected components of the loop space, the existence of a fusion product cannot be explored separately over each connected component. 

In order to apply the general concept of a fusion product to the present situation, we make two observations. The first is that the central extension $\lspinhat n$, considered as a $\ueins$-bundle over $\lspin n$, carries a canonical fusion product. In fact, we prove in Theorem \ref{th:fusionextension} that these canonical fusion products exist for a large class of central extensions of loop groups, including the universal central extension of a compact, connected, simple and simply-connected Lie group. 

The second observation is that a spin structure on the loop space $LM$ can equivalently be understood as a $\ueins$-bundle $\inf S$ over $LFM$, the loop space of the total space of the frame bundle of $M$, together with a certain  action of $\lspinhat n$. Now, our new additional structure for spin structures on loop spaces is a fusion product on $\inf S$, subject to the condition that the $\lspinhat n$-action on $\inf S$ is compatible with the two fusion products on $\lspinhat n$ and $\inf S$; see Definition \ref{def:fusionspinstructure}. 

The main result of this article is the following theorem.

\begin{theorem}
\label{th:main}
Let $M$ be a spin manifold of dimension $n=3$ or $n>4$. Then, $M$ is string if and only if $LM$ is fusion spin. 
\end{theorem}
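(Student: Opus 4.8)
The plan is to reduce Theorem \ref{th:main} to the defining characterization that $M$ is string precisely when $\frac{1}{2}p_1(M) = 0 \in \h^4(M,\Z)$, by setting up a dictionary between fusion spin structures on $LM$ and string structures on $M$. The bridge is the transgression--regression machinery for fusion bundles over loop spaces: transgression sends geometric data on a manifold $X$ to fusion $\ueins$-bundles over $LX$, and the inverse functor, regression $\un$, sends a fusion $\ueins$-bundle over $LX$ back to a bundle gerbe over $X$. I would apply this with $X = FM$, the total space of the frame bundle.

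First I would unpack a fusion spin structure (Definition \ref{def:fusionspinstructure}) as a fusion $\ueins$-bundle $\inf S$ over $L(FM)$ equipped with an $\lspinhat n$-action that is compatible with the fusion product on $\inf S$ and the canonical fusion product on $\lspinhat n$ provided by Theorem \ref{th:fusionextension}. Regarding $\inf S$ as a fusion bundle over the loop space of the manifold $FM$, applying regression produces a bundle gerbe $\un(\inf S)$ over $FM$. Applying the same regression to $\lspinhat n$, viewed as a fusion bundle over $\lspin n$, returns the basic gerbe $\gbas$ over $\spin n$; consequently the compatibility of the $\lspinhat n$-action with the two fusion products regresses to an isomorphism over $FM^{[2]} \cong FM \times \spin n$ relating $\gbas$ and $\un(\inf S)$, together with a coherence condition over $FM^{[3]}$. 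This is precisely the bundle-gerbe model of a string structure on $M$. Conversely, transgression turns a string structure on $M$ into a fusion $\ueins$-bundle over $L(FM)$ carrying exactly this equivariance, i.e. a fusion spin structure. Since regression and transgression are mutually inverse on the relevant categories of fusion bundles, I obtain a bijection between isomorphism classes of fusion spin structures on $LM$ and isomorphism classes of string structures on $M$.

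The theorem then follows immediately. A string structure on $M$ exists if and only if the obstruction $\frac{1}{2}p_1(M)$ vanishes, i.e. if and only if $M$ is string; by the correspondence of the previous paragraph this holds if and only if $LM$ admits a fusion spin structure, that is, $LM$ is fusion spin. The dimension hypothesis $n = 3$ or $n > 4$ enters exactly at this point: it guarantees that $\spin n$ is simple and simply-connected, so that the universal central extension $\lspinhat n$ carries the canonical fusion product of Theorem \ref{th:fusionextension} and the basic gerbe $\gbas$ is available with the properties needed for regression. The excluded cases $n \in \{1,2,4\}$ are precisely those in which $\spin n$ fails to be simple.

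The step I expect to be the main obstacle is making this equivariant regression--transgression correspondence fully precise. One must verify that the $\lspinhat n$-action together with its fusion compatibility regresses exactly to the isomorphism and coherence data defining a string structure over $FM$, with no information lost or added; in other words, that regression is an equivalence not merely of fusion $\ueins$-bundles but of the entire package of fusion-equivariant data built on the group structure of $\lspinhat n$ and the principal $\spin n$-action on $FM$. This requires tracking how the multiplication of $\lspinhat n$ and the action on $FM$ behave under transgression, and reconciling the resulting object with the bundle-gerbe model of string structures that trivializes the pullback of $\gbas$. Should one want the refined geometric statement rather than the bare existence claim, the additional bookkeeping of connections and the thin/superficial conditions on the fusion bundles would enter here as well.
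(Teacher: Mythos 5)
Your overall route --- transgress string structures to produce fusion spin structures, regress fusion spin structures to produce string data --- is the same as the paper's, but the proposal has two genuine gaps, and the step you defer as ``the main obstacle'' is exactly the point where the paper does something different from what you plan.

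First, your claimed bijection between isomorphism classes of fusion spin structures on $LM$ and string structures on $M$ is false, and the paper says so explicitly in the introduction: fusion spin structures are \emph{not} good enough to achieve a bijection; that refinement requires an additional equivariance under thin homotopies and is deferred to later work. What the paper actually has is one-sided: Theorem \ref{th:transgressionregression} gives $\un_x \circ \tr \cong$ (forget connections), but there is no statement that $\tr \circ \un_x \cong \id$ on fusion bundles, so ``regression and transgression are mutually inverse on the relevant categories'' is not available. Fortunately the theorem only needs existence in both directions, so this overclaim can be excised --- but then each direction must be carried out separately and completely.

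Second, in the converse direction you plan to show that the regressed data is ``precisely the bundle-gerbe model of a string structure,'' i.e.\ that the fusion-compatible $\lspinhat n$-action regresses to the isomorphism $\mathcal{A}$ \emph{and} the coherence transformation $\sigma$ over $FM^{[3]}$ of Definition \ref{deftriv}, and you acknowledge you do not know how to verify this. The paper never does this: regression is a weak functor, honest only after passing to $\hc 1$, and reconstructing the coherence 2-morphism is precisely what is avoided. Instead, the paper regresses only the bundle $T$ and the morphism $\kappa$, obtaining a bundle gerbe $\mathcal{S} = \un_p(T,\lambda_T)$ over $FM$ and an isomorphism $\mathcal{A} \maps \diffr^{*}\gbas \otimes \pr_2^{*}\mathcal{S} \to \pr_1^{*}\mathcal{S}$ over $FM^{[2]}$ (using Corollary \ref{co:fusionuniversal}, Theorem \ref{th:transgressionregression} and Lemma \ref{lem:regprop}), and then invokes the purely cohomological Lemma \ref{lem:stringclass}: the mere existence of such an $\mathcal{A}$, with no coherence condition whatsoever, already forces $-\mathrm{dd}(\mathcal{S})$ to be a string class, whence $M$ is string by Proposition \ref{th:stringclasses}. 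This shortcut is what makes the converse provable. Finally, in the forward direction you omit that transgression is only defined for gerbes \emph{with connection}: a string structure must first be equipped with a string connection (Theorem \ref{th:stringconnection}) before it can be transgressed, and one then needs Theorem \ref{th:liftingtransgression} together with Proposition \ref{prop:fusionpreserving} to identify the transgressed Chern--Simons 2-gerbe with the spin lifting gerbe $\mathcal{S}_{LM}$ \emph{including} its internal fusion product, so that the transgressed trivialization really is a fusion spin structure via Corollary \ref{co:fusionspinlifting}.
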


This paper is organized as follows. The subsequent Section \ref{sec:orientations} is complementary and concerns the   problem of characterizing \emph{spin manifolds} by \emph{orientations} of  loop spaces, as well as its solution by Stolz and Teichner. This problem is analogous to the one addressed in the present article but in \quot{one degree lower}. The content of Section \ref{sec:orientations} is not used in the main text, but might be useful for getting familiar with the topic.

Section \ref{sec:fusionspinstructures} gives the definition of our new notion of fusion spin structures (Definition \ref{def:fusionspinstructure}), and introduces and reviews the necessary material. In particular, we reveal the canonical fusion product on the central extension $\lspinhat n$ using  an explicit model of $\lspinhat n$ motivated by conformal field theory and introduced by Mickelsson \cite{mickelsson1}.

The next two sections prepare the machinery for the proof of our main result. Section \ref{sec:generaltheory} is concerned with the loop space side. There we recast fusion spin structures in  the context of lifting bundle gerbes. The theory of lifting bundle gerbes has been invented by Murray \cite{Murray} for ordinary central extensions of Lie groups. The main result of Section \ref{sec:generaltheory} is an extension of this theory to \emph{fusion} extensions, i.e. central extensions of loop groups by $\ueins$ whose underlying principal bundle  carries a fusion product.   It includes a new additional structure for bundle gerbes over loop spaces called an \emph{internal fusion product} (Definition \ref{def:internalfusionproduct}). Lifting bundle gerbes with internal fusion products  serve as a bundle gerbe-theoretic setting for fusion spin structures.

In Section \ref{sec:transgressionstringstructures} we provide a similar, gerbe-theoretical setting for string structures, on the basis of my paper \cite{waldorf8}. That is, we regard string structures as trivializations of a certain  bundle 2-gerbe, the Chern-Simons 2-gerbe \cite{carey4}. As the main point in Section \ref{sec:transgressionstringstructures} we introduce a  categorical version of the transgression homomorphism $\tau$, which takes a bundle 2-gerbe over $M$ to a bundle gerbe over $LM$ with internal fusion product. We prove that the transgression of the Chern-Simons 2-gerbe gives the spin lifting bundle gerbe with its internal fusion product (Proposition \ref{prop:fusionpreserving}). This geometrical transgression procedure establishes  the relation between string structures on $M$ and fusion spin structures on $LM$. In Section \ref{sec:conclusion} we assemble the pieces and prove Theorem \ref{th:main}.

I remark that we 
\emph{do not discuss} the relation between the \emph{set} of string structures on  $M$ and the \emph{set} of fusion spin structures on $LM$. Although (according to Theorem \ref{th:main}) one set is empty if and only if the other is empty, fusion spin structures are still not good enough to achieve a \emph{bijection} between the two sets. Such bijection is the subject of ongoing research, and will additionally employ a certain equivariance under thin homotopies of loops. 
As a consequence, we can -- at the moment -- not provide any new insights to the theory of Dirac operators on the loop space.

\paragraph{Acknowledgements.} This work is supported by the Deutsche Forschungsgemeinschaft (DFG) within the scientific network \quot{String Geometry} (project code 594335). I thank the Department of Mathematics at Hamburg University for kind hospitality and support during the summer term. Many thanks to André Henriques, Thomas Nikolaus, Ulrich Pennig, Peter Teichner,  Chris Schommer-Pries, Urs Schreiber, and Christoph Wockel  for helpful questions, comments, and discussions.

\section{Spin Manifolds and Loop Space Orientations}

\label{sec:orientations}

Analogous to the question how spin structures on loop spaces can characterize string manifolds is the question how \emph{orientations} of loop spaces can characterize \emph{spin manifolds}.  This question has been considered early by Atiyah \cite{atiyah2} and recently solved by Stolz and Teichner \cite{stolz3}.

The following canonical double covering $\inf O_{LM}$ of $LM$ is considered as the \emph{orientation bundle} of the loop space. The monodromy in the central extension
\begin{equation*}
1 \to \Z_2 \to \spin n\to \so n \to 1
\end{equation*}
is a smooth map $m\maps L\so n \to \Z_2$. Taking free loops in the oriented frame bundle $FM$ of $M$ produces an $L\so n$-bundle $LFM$ over $LM$, and $\inf O_{LM}$ is obtained by extending the structure group of $LFM$ along $m$, i.e.
\begin{equation*}
\inf O_{LM} \df LFM \times_{L\so n} \Z_2\text{.}
\end{equation*}
Accordingly, an \emph{orientation} of $LM$ is defined to be a section of $\inf O_{LM}$.

On the level of cohomology classes, the class in $\h^1(LM,\Z_2)$ of $\inf O_{LM}$ is the transgression of the second Stiefel-Whitney class $w_2(M) \in \h^2(M,\Z_2)$, i.e. the image of $w_2(M)$ under the $\Z_2$-reduced transgression homomorphism
\begin{equation*}
\tau \maps  \h^2(M,\Z_2) \to \h^{1}(LM,\Z_2)\text{.}
\end{equation*}
In particular, $LM$ is orientable if $M$ is spin.

Atiyah remarked \cite{atiyah2} that for a simply-connected manifold $M$ the converse statement is true, so that then $M$ is spin if and only if $LM$ is orientable. This statement was later proved in detail by McLaughlin \cite[Proposition 2.1]{mclaughlin1}.

The problem of characterizing non-simply connected spin manifolds was solved by Stolz and Teichner \cite{stolz3}. Using methods of spin geometry they recognized a crucial additional structure on the orientation bundle $\inf O_{LM}$, a so-called \emph{fusion product}. Accordingly, among all orientations of $LM$, there is a subclass consisting of \emph{fusion-preserving orientations}. Stolz and Teichner showed:

\begin{theorem}[{{\cite[Theorem 9]{stolz3}}}]
Let $M$ be an oriented Riemannian manifold. Then, there is a bijection
\begin{equation*}
\bigset{9em}{Fusion-preserving orientations of $LM$}
\cong
\bigset{7em}{Equivalence classes of spin structures on $M$}\text{.}
\end{equation*}
In particular, $M$ is spin if and only if $LM$ is fusion orientable.
\end{theorem}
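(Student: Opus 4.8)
The plan is to follow the transgression--regression strategy that also governs the main theorem of this paper, applied \quot{one degree lower}: here spin structures on $M$ take the role of string structures, and fusion-preserving orientations of $LM$ the role of fusion spin structures. First I would reformulate each side so that the comparison becomes transparent. On the manifold, a spin structure is a lift of the structure group of the oriented frame bundle $FM$ along the double cover $\spin n \to \so n$; up to equivalence such lifts are, when they exist, a torsor over $\h^1(M,\Z_2)$, and the obstruction to their existence is $w_2(M) \in \h^2(M,\Z_2)$. On the loop space, an orientation is a section of $\inf O_{LM} \df LFM \times_{L\so n} \Z_2$, equivalently an $L\so n$-equivariant map $LFM \to \Z_2$ twisted by the monodromy $m\maps L\so n \to \Z_2$; and the fusion product singled out by Stolz and Teichner arises by transgressing the homomorphism property of $m$, relating the fibres of $\inf O_{LM}$ over the three loops obtained by gluing a triple of paths in $M$ with common endpoints.

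Second, I would build the transgression map from spin structures to orientations. Given a lift $\widetilde{FM} \to FM$ along $\spin n \to \so n$, each loop of frames $\gamma \in LFM$ lifts to a \emph{path} in $\widetilde{FM}$, and the $\Z_2$-ambiguity between the two endpoints of this path defines a value $s(\gamma) \in \Z_2$. This assignment is equivariant with respect to $m$ and hence defines a section of $\inf O_{LM}$, i.e. an orientation. The key point is to check that this section is \emph{fusion-preserving}: since the lift exists globally on $FM$, the endpoint comparison is compatible with the gluing of paths, so the signs attached to the three loops associated with a triple of paths multiply correctly under the transgressed fusion product. I would also verify that equivalent spin structures transgress to the same orientation, so that the map descends to equivalence classes.

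The heart of the argument --- and the step I expect to be the main obstacle --- is the \emph{inverse}, the regression of a fusion-preserving orientation back to a spin structure on $M$. The difficulty is that an ordinary orientation only detects the transgressed class $\tau(w_2(M)) \in \h^1(LM,\Z_2)$, and $\tau$ loses information precisely on the non-simply connected part (the phenomenon behind Theorem \ref{th:pilchwarner}); the fusion product is what restores it. Concretely, I would use a fusion-preserving section to assign to each point $x \in FM$ a $\Z_2$-torsor, by evaluating the orientation on small loops through $x$ built from pairs of paths, and then invoke the associativity of the fusion product to show that this assignment is independent of the chosen paths and descends to well-defined transition data for a $\spin n$-lift of $FM$. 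Establishing that this reconstruction is smooth, $\spin n$-equivariant, and genuinely inverse to transgression is the technical core; it is the $\Z_2$-level instance of the general regression theorem for fusion bundles over loop spaces.

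Finally, I would assemble the bijection. Having an inverse pair identifies the two sets directly; alternatively one argues that transgression is a morphism of torsors over the isomorphism $\h^1(M,\Z_2) \cong \{\text{fusion-preserving locally constant } \Z_2\text{-functions on } LM\}$ furnished by regression (holonomy in one direction, path-gluing reconstruction in the other), so that matching base points forces the two sets to be simultaneously empty. This yields both the displayed bijection and the \quot{in particular} statement that $M$ is spin if and only if $LM$ is fusion orientable.
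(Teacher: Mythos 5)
First, a point of order: the paper contains no proof of this statement. It is quoted, with attribution, from Stolz--Teichner \cite[Theorem 9]{stolz3}, and the surrounding Section \ref{sec:orientations} is explicitly described as background that is not used in the main text. So there is no in-paper argument to compare yours against line by line; the meaningful comparison is with the paper's proof of its own main result, Theorem \ref{th:main}, of which this statement is the analogue \quot{one degree lower}. Measured against that, your strategy is exactly the right one, and it is the one the paper's machinery is built for: identify spin structures on $M$ with trivializations of the $\Z_2$-lifting gerbe $\mathcal{G}_{FM}$ of Example \ref{ex:spinliftingM} (Murray's Theorem \ref{th:lifting}), identify $\inf O_{LM}$ with the transgression $\tr_{\mathcal{G}_{FM}}$ together with its canonical fusion product (the example in Section \ref{sec:fusion}), and let transgression--regression convert fusion-preserving sections into trivializations and back. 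Your transgression direction --- holonomy of the double cover $\widetilde{FM}\to FM$, equivariant under the monodromy $m$, fusion-preserving because the contributions of the middle path cancel in pairs --- is correct in outline; note also that smoothness is harmless here, since a smooth map into the discrete group $\Z_2$ is locally constant, which is precisely why no connections or thin-homotopy data are needed at this level (in contrast to the $\ueins$ case treated in \cite{waldorf10,waldorf11}).

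The genuine shortfall is at the point you yourself identify as the core: the regression step is asserted, not performed. \quot{Evaluating the orientation on small loops through $x$ built from pairs of paths} is the right mechanism, but to have a proof you must actually carry it out: fix a base frame $p_0\in FM$; define the fibre over $x\in FM$ as pairs $(\alpha,\epsilon)$ with $\alpha$ a path from $p_0$ to $x$ and $\epsilon\in\Z_2$, modulo $(\alpha,\epsilon)\sim\bigl(\beta,\epsilon\cdot s(\alpha\lop\beta)\bigr)$, where transitivity of this relation is exactly the fusion property of the section $s$; then verify local triviality, verify that the restriction of the resulting double cover of $FM$ to each fibre of $FM\to M$ is the nontrivial cover $\spin n\to\so n$ (this is where the $m$-equivariance of the section enters, and it is what makes the cover a spin structure rather than an arbitrary double cover of $FM$), and verify that this construction is a two-sided inverse to holonomy. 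Appealing to \quot{the $\Z_2$-level instance of the general regression theorem} is a citation to \cite{waldorf9}, not an argument, and you should also pin down which fusion product on $\inf O_{LM}$ is meant (the transgressed one of $\tr_{\mathcal{G}_{FM}}$, described concretely via triples of lifted paths with common endpoints), so that the fusion-preservation of holonomy and the well-definedness in regression refer to the same structure. Finally, your closing \quot{torsor} argument is not an independent alternative: a morphism of torsors over an isomorphism of groups yields a bijection only when both sides are nonempty, and the transfer of nonemptiness in the direction \quot{$LM$ fusion orientable $\Rightarrow$ $M$ spin} is available only once the regression construction exists --- so that step cannot be bypassed.
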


The notion of a fusion product is also crucial for the new version of spin structures that we introduce in this article, and is explained in the following section.

\setsecnumdepth 1

\section{Fusion Spin Structures}

\label{sec:fusionspinstructures}
\label{sec:spinstructures}
\label{sec:spinliftinggerbe}

In this section we explain our new version of spin structures on loop spaces, which we call \emph{fusion spin structure}, see Definition \ref{def:fusionspinstructure}. We consider a spin manifold $M$ of dimension $n=3$ or $n > 4$, in which case the group $\spin n$ is simple, connected, simply-connected and compact. We  fix a generator $\generator\in\h^3(\spin n,\Z) \cong \Z$. The loop group $\lspin n$ has a  universal central extension \cite{pressley1}
\begin{equation}
\label{eq:centralextension}
\alxydim{}{1 \ar[r] & \ueins \ar[r] & 
\begin{minipage}[b]{4em}
$\lspinhat n$\vspace{0.4em}
\end{minipage} \ar[r]^-{p} & \lspin n \ar[r] & 1\text{.}}
\end{equation}
This universal extension is determined up to a sign, which we fix by requiring the following identity for its first Chern class (when considered as a principal  $\ueins$-bundle over $L\spin n$):
\begin{equation}
\label{eq:signconvention}
\tau(\generator) + \mathrm{c}_1(\lspinhat n) = 0 \in \h^2(L\spin n,\Z)\text{.}
\end{equation}

 We start by reviewing the classical notion of spin structures on loop spaces. 
We denote by $FM$  the spin-oriented frame bundle of $M$, which is a $\spin n $-principal bundle over $M$. Since $\spin n$ is connected, $LFM$ is a principal $\lspin n$-bundle over $LM$, see \cite[Lemma 5.1]{waldorf13} and \cite[Proposition 1.9]{spera1}. 

\begin{definition}[{{\cite{killingback1}}}]
A \emph{spin structure on $LM$} is a lift of the structure group of the looped  frame bundle $LFM$ from $\lspin n$ to the central extension $\lspinhat n$.
\end{definition}

Thus, a spin structure on $LM$ is a pair $(\inf S,\sigma)$ of a principal $\lspinhat n$-bundle $\inf S$ over $LM$ together with a smooth map $\sigma\maps \inf S \to LFM$ such that the diagram
\begin{equation*}
\alxydim{@R=0.8cm}{\inf S \times \lspinhat n \ar[dd]_{\sigma \times p} \ar[r] & \inf S \ar[dd]_{\sigma} \ar[dr] \\ &&LM \\ LFM \times \lspin n \ar[r] & LFM \ar[ur] }
\end{equation*}
is commutative. A morphism between spin structures $(\inf S_1,\sigma_1)$ and $(\inf S_2,\sigma_2)$ is a bundle morphism $\varphi\maps\inf S_1 \to \inf S_2$ such that $\sigma_1=\sigma_2 \circ \varphi$. 
The following lemma is a general fact in lifting theory, which we need later.
\begin{lemma}
\label{lem:ueinsbundle}
Suppose $(\inf S,\sigma)$ is a spin structure on $LM$.
Then,   $\sigma\maps  \inf S \to LFM$ together with the $\ueins$-action on $\inf S$ induced along the group homomorphism
\begin{equation*}
\alxydim{}{\ueins \ar[r]^-{i} &  \ueins \ar@{^(->}[r] & \begin{minipage}[b]{4em}
$\lspinhat n$\text{,}\vspace{0.25em}
\end{minipage}}
\end{equation*}
where $i$ denotes the inversion of the abelian group $\ueins$,
is a principal $\ueins$-bundle over $LM$.  
\end{lemma}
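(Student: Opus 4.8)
The plan is to verify directly that the smooth surjection $\sigma\maps\inf S \to LFM$, equipped with the given $\ueins$-action, satisfies the defining axioms of a principal $\ueins$-bundle; the base is $LFM$ with $\sigma$ as projection, and the reference to $LM$ in the statement is through the further bundle projection $LFM \to LM$. Write $\iota\maps\ueins\to\lspinhat n$ for the inclusion of $\ker(p)$ coming from \erf{eq:centralextension}, so that the action under consideration is $s \cdot z \df s \cdot \iota(i(z))$. Four things must be checked: that this action preserves the fibres of $\sigma$, that it is free, that it is transitive on each fibre, and that $\sigma$ is locally trivial.

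The first three are formal. Fibre preservation is immediate from equivariance of $\sigma$ together with $p \circ \iota = \mathrm{const}_e$: one has $\sigma(s \cdot z) = \sigma(s) \cdot p(\iota(i(z))) = \sigma(s)$. Freeness follows because the $\lspinhat n$-action on $\inf S$ is free and $\iota \circ i$ is injective. For transitivity, suppose $\sigma(s_1) = \sigma(s_2)$; since the projection $\inf S \to LM$ factors through $\sigma$, the points $s_1, s_2$ lie in a common fibre of the principal $\lspinhat n$-bundle $\inf S \to LM$, so $s_2 = s_1 \cdot \hat g$ for a unique $\hat g \in \lspinhat n$. Applying $\sigma$ gives $\sigma(s_1) = \sigma(s_1) \cdot p(\hat g)$, and freeness of the $\lspin n$-action on $LFM$ forces $p(\hat g) = e$, hence $\hat g = \iota(w)$ for some $w \in \ueins$ and $s_2 = s_1 \cdot i(w)$. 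Thus the $\sigma$-fibres are precisely the $\ueins$-orbits, acted on simply transitively.

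The essential step is local triviality. Over a sufficiently small open $U \subseteq LM$ I would choose trivializations $\inf S|_U \cong U \times \lspinhat n$ and $LFM|_U \cong U \times \lspin n$, which exist because both $\inf S \to LM$ and $LFM \to LM$ are locally trivial principal bundles. In these coordinates $\sigma$ covers $\id_U$ and is $\lspinhat n$-equivariant, so it has the form $(u,\hat g) \mapsto (u, \psi(u)\, p(\hat g))$ for a smooth map $\psi\maps U \to \lspin n$ determined by $\sigma(u,e)$. Replacing the chosen trivialization of $LFM|_U$ by its left translate through $\psi$ absorbs this factor and turns $\sigma$ into $\id_U \times p$. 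Since $p\maps\lspinhat n \to \lspin n$ is itself the principal $\ueins$-bundle \erf{eq:centralextension}, a local section $\mu\maps V \to \lspinhat n$ of $p$ over an open $V \subseteq \lspin n$ pulls back to a local section $(u,g) \mapsto (u,\mu(g))$ of $\sigma$ over $U \times V$, exhibiting $\sigma$ as locally $\ueins$-trivial and compatible with the principal action.

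I expect the only real obstacle to be keeping the local-triviality argument honest in the correct smooth category: $\lspin n$ and $\lspinhat n$ are infinite-dimensional, so I must invoke that $LFM \to LM$ is genuinely locally trivial (as cited) and, crucially, that the central extension \erf{eq:centralextension} admits local sections there. The inversion $i$ built into the definition of the $\ueins$-action is cosmetic for this lemma: it merely reverses the direction of the free transitive action and leaves all four axioms intact, its purpose being to pin down the sign in the convention \erf{eq:signconvention} rather than to affect the bundle structure proved here.
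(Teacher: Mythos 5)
Your proof is correct, but there is nothing in the paper to compare it against line by line: the paper states Lemma \ref{lem:ueinsbundle} without proof, calling it \quot{a general fact in lifting theory}, and its general form (a $\hat G$-lift of a principal $G$-bundle $P$ yields a principal $A$-bundle over $P$ via the $A$-action twisted by the inversion $i$) is asserted equally without proof in Section \ref{sec:liftinggerbesbasics}. Your direct verification -- fibre preservation via $p\circ\iota=\mathrm{const}_e$, freeness from freeness of the $\lspinhat n$-action, simple transitivity on $\sigma$-fibres from the uniqueness of $\hat g$ with $s_2=s_1\hat g$ together with freeness of the $\lspin n$-action on $LFM$, and local triviality by straightening $\sigma$ into $\id_U\times p$ and pulling back local sections of $p$ -- is exactly the standard argument the paper is implicitly invoking, and it is complete modulo the two inputs you correctly flag: local triviality of $LFM\to LM$ (cited in the paper from \cite{waldorf13,spera1}) and the existence of local sections of the central extension \erf{eq:centralextension}, which holds because $\lspinhat n\to\lspin n$ is by construction a principal $\ueins$-bundle. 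You also handled well the one genuine wrinkle in the statement: the base of the resulting $\ueins$-bundle is $LFM$ (with projection $\sigma$), not $LM$ as the lemma's wording suggests; this is how the lemma is used in Definition \ref{def:fusionspinstructure} and Example \ref{ex:spinliftinggerbe}, so \quot{over $LM$} is best read as a slip. One small correction to your closing remark: the inversion $i$ is indeed irrelevant to the principal-bundle axioms, but its purpose is not the sign convention \erf{eq:signconvention} for the central extension; rather, it is what makes the induced map $\kappa(\hat g\otimes\hat p)\df\hat p\cdot\hat g^{-1}$ of Theorem \ref{th:lifting} a well-defined $\ueins$-equivariant morphism on the tensor product, i.e.\ it is needed for the correspondence between spin structures and trivializations of the lifting gerbe.
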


An important role in this article is played by so-called \emph{fusion} in loop spaces. In order to explain fusion in a  general context, let $X$ be a connected smooth manifold. By $PX$ we denote the set of paths in $X$ with \quot{sitting instants}, i.e., smooth maps $\gamma\maps [0,1] \to X$ that are locally constant near the endpoints.  We denote by $PX^{[k]}$ the $k$-fold fibre product of $PX$ over the evaluation map $\ev\maps PX \to X \times X$, i.e. the set of $k$-tuples of paths with a common initial point and a common end point. We have a map
\begin{equation*}
\lop\maps  \p X^{[2]} \to L X\maps  (\gamma_1,\gamma_2) \mapsto \prev{\gamma_2} \pcomp \gamma_1\text{,}
\end{equation*}
where $\pcomp$ denotes the path concatenation, and $\prev{\gamma}$ denotes the reversed path. This map is well-defined (it produces smooth loops) due to the sitting instants of the paths.

\begin{definition}[{{\cite[Definition 2.1.3]{waldorf10}}}]
\label{def:fusionproduct}
Let $A$ be an abelian Lie group, and let $\inf E$ be a Fréchet principal $A$-bundle over $LX$.
A \emph{fusion product} on $\inf E$ assigns to each triple $(\gamma_1,\gamma_2,\gamma_3) \in PX^{[3]}$  a smooth, $A$-equivariant map
\begin{equation*}
\lambda_{\gamma_1,\gamma_2,\gamma_3}\maps  \inf E_{\gamma_1 \lop \gamma_2}  \otimes \inf E_{\gamma_2\lop\gamma_3} \to \inf E_{\gamma_1 \lop \gamma_3}\text{,}
\end{equation*} 
such that the following two conditions are satisfied:
\begin{enumerate}[(i)]

\item 
Associativity: for $(\gamma_1,\gamma_2,\gamma_3,\gamma_4) \in PX^{[4]}$ and $q_{ij} \in \inf E_{\gamma_i \lop \gamma_j}$, 
\begin{equation*}
\lambda_{\gamma_1,\gamma_3,\gamma_4}(\lambda_{\gamma_1,\gamma_2,\gamma_3}(q_{12} \otimes q_{23}) \otimes q_{34})= \lambda_{\gamma_1,\gamma_2,\gamma_4}(q_{12} \otimes \lambda_{\gamma_2,\gamma_3,\gamma_4}(q_{23} \otimes q_{34}) )\text{.}
\end{equation*}

\item
Smoothness: if $U$ is a smooth manifold and $c\maps U \to PX^{[3]}$ is a map for which the three induced maps $e_{ij} \df \lop \circ \pr_{ij} \circ c\maps  U \to LX$ are all smooth, then
\begin{equation*}
\lambda_c\maps  e_{12}^{*}\inf E \otimes e_{23}^{*}\inf E \to e_{13}^{*}\inf E
\end{equation*}
is a smooth morphism between bundles over $U$.
\end{enumerate}
\end{definition}

I remark that in my papers \cite{waldorf9,waldorf10,waldorf11} I have treated the smoothness of fusion products in the convenient setting of diffeological spaces; above definition is equivalent but completely avoids diffeological spaces.

\begin{definition}
\label{def:fusionextension}
Let $G$ be a Lie group and let $A$ be an abelian Lie group. A \emph{fusion extension of $LG$ by $A$} is a central extension
\begin{equation*}
1 \to A \to \inf G \to LG \to 1
\end{equation*}
of Fréchet Lie groups, together with a  multiplicative fusion product $\lambda$ on the principal $A$-bundle $\inf G$. 
\end{definition}

Here, a \emph{multiplicative} fusion product is one such that
\begin{equation}
\label{eq:multfus}
\lambda_{\gamma_1,\gamma_2,\gamma_3}(q_{12} \otimes q_{23}) \cdot \lambda_{\gamma_1',\gamma_2',\gamma_3'}(q_{12}' \otimes q_{23}') = \lambda_{\gamma_1\gamma_1',\gamma_2\gamma_2',\gamma_3\gamma_3'}(q^{}_{12}q_{12}' \otimes q^{}_{23}q_{23}')
\end{equation}
for all elements $q_{ij}\in \inf G_{\gamma_i\lop \gamma_j}$ and $q'_{ij}\in \inf G_{\gamma_i' \lop \gamma_j'}$ and all $(\gamma_1,\gamma_2,\gamma_3),(\gamma_1',\gamma_2',\gamma_3') \in PG^{[3]}$. 

Fusion extensions are relevant for the present article because of the following fact:
\begin{theorem}
\label{th:fusionextensionobserve}
The universal central extension $\lspinhat n$ is a fusion extension (in a canonical way). 
\end{theorem}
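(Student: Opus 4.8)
The plan is to realise the principal $\ueins$-bundle $\lspinhat n$ over $\lspin n$ as the transgression of a bundle gerbe on $\spin n$, and to read off the canonical fusion product from the geometry of transgression. Since $\spin n$ (for $n=3$ or $n>4$) is compact, connected, simple and simply-connected, this is the instance $G=\spin n$ of the general Theorem \ref{th:fusionextension}, but I would argue directly as follows. Let $\gbas$ be the basic bundle gerbe on $\spin n$, i.e.\ the (up to isomorphism unique) bundle gerbe whose Dixmier--Douady class is the chosen generator $\generator$. Transgression sends $\gbas$ to a principal $\ueins$-bundle $\tau(\gbas)$ over $\lspin n$ whose fibre over a loop $\ell$ is the $\ueins$-torsor of isomorphism classes of trivialisations of $\ell^{*}\gbas$, and its first Chern class is $\mathrm{c}_1(\tau(\gbas))=\tau(\generator)$.

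First I would construct the fusion product on $\tau(\gbas)$. Given $(\gamma_1,\gamma_2,\gamma_3)\in P\spin n^{[3]}$, the three loops $\gamma_1\lop\gamma_2$, $\gamma_2\lop\gamma_3$ and $\gamma_1\lop\gamma_3$ are built pairwise from the common path segments $\gamma_1,\gamma_2,\gamma_3$. A trivialisation over $\gamma_1\lop\gamma_2$ and one over $\gamma_2\lop\gamma_3$ restrict to trivialisations of $\gbas$ along the shared segment $\gamma_2$; cutting these out and gluing the remaining halves yields a trivialisation over $\gamma_1\lop\gamma_3$, and this gluing defines the $\ueins$-equivariant map $\lambda_{\gamma_1,\gamma_2,\gamma_3}$. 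Associativity is immediate because gluing along $\gamma_2$ and along $\gamma_3$ commute, and smoothness in the sense of Definition \ref{def:fusionproduct}(ii) is precisely the smooth dependence of trivialisations on the loop supplied by the transgression machinery of \cite{waldorf10,waldorf11}. To obtain the central-extension structure together with the multiplicativity \eqref{eq:multfus}, I would invoke the (essentially unique) multiplicative structure $\mathcal{M}$ on $\gbas$ \cite{carey4,waldorf5}: transgressing $\mathcal{M}$ produces the group multiplication that turns $\tau(\gbas)$ into a central extension of $\lspin n$, and the compatibility of $\mathcal{M}$ with restriction to path segments translates exactly into \eqref{eq:multfus}.

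The last and decisive step is to identify $\tau(\gbas)$ with $\lspinhat n$ as central extensions, not merely as $\ueins$-bundles. By the sign convention \eqref{eq:signconvention} the universal extension satisfies $\mathrm{c}_1(\lspinhat n)=-\tau(\generator)$, so $\tau(\gbas)$ matches the \emph{opposite} central extension (the one with inverted $\ueins$-action, cf.\ Lemma \ref{lem:ueinsbundle}); dualising transports the fusion product onto $\lspinhat n$ itself. I expect this identification to be the main obstacle: one must verify that transgressing the multiplicative basic gerbe reproduces not only the topological type in $\h^2(\lspin n,\Z)\cong\Z$ but the actual group law of the Pressley--Segal universal extension, and that all sign and duality conventions are consistent. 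A more computational alternative, which sidesteps the classification, is to take Mickelsson's explicit conformal-field-theoretic model of $\lspinhat n$ \cite{mickelsson1}, write $\lambda$ down in those coordinates, and check conditions (i), (ii) and \eqref{eq:multfus} by hand; there the difficulty migrates to verifying smoothness and the relevant cocycle identities in the Fr\'echet category.
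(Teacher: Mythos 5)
Your route is, in substance, the paper's own second proof of this theorem: transgress the basic multiplicative bundle gerbe and identify the result with $\lspinhat n$. This is Theorem \ref{th:fusionextension} specialized to $G=\spin n$ and recorded as Corollary \ref{co:fusionuniversal}; your closing \quot{computational alternative} via Mickelsson's model is precisely the paper's first proof (the explicit fusion product \erf{eq:explicitfusionproduct}), and Proposition \ref{prop:equivext} is the paper's comparison of the two. The one step of yours that fails is the sign bookkeeping at the identification stage. You assert $\mathrm{c}_1(\tau(\gbas))=\tau(\generator)$ and on that basis insert a dualization (\quot{$\tau(\gbas)$ matches the opposite central extension}). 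But for the transgression functor with exactly your fibre description \erf{eq:lgfibre} -- fibres are isomorphism classes of connection-preserving trivializations -- the paper's relation \erf{eq:transgressionclasses} gives $\mathrm{c}_1(\tr_{\gbas})=-\tau(\mathrm{dd}(\gbas))=-\tau(\generator)$, which by the sign convention \erf{eq:signconvention} equals $\mathrm{c}_1(\lspinhat n)$. Hence $\tr_{\gbas}$ \emph{is} the universal central extension on the nose, and no dualization is needed; the convention \erf{eq:signconvention} was fixed precisely to make this identification direct. Run with the correct sign, your dualization step would instead identify $\lspinhat n$ with the dual of $\tr_{\gbas}$, i.e.\ with the extension of Chern class $+\tau(\generator)$, which is the wrong one. (Your conclusion would still survive, since the dual of a fusion extension is again a fusion extension, but the identification you state is off by a dualization relative to the paper.)

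Two smaller points. The \quot{main obstacle} you anticipate -- that matching Chern classes might not match group laws -- is not an obstacle here: $L\spin n$ is connected and simply connected (as $\spin n$ is simply connected and $\pi_2$ of a compact Lie group vanishes), so a central extension of $L\spin n$ by $\ueins$ is determined up to isomorphism by its topological type; this is the uniqueness behind the paper's statement that the extension \erf{eq:centralextension} \quot{is determined up to a sign}, and it is all that Corollary \ref{co:fusionuniversal} uses. Also, transgression in this framework is only defined for gerbes \emph{with connection}, so you must transgress $\gbas$ equipped with its canonical connection and connection-preserving multiplicative structure, as Theorem \ref{th:fusionextension} requires; the multiplicativity \erf{eq:multfus} of the fusion product is then obtained, as in the paper, from the facts that $\tr_{\mathcal{M}}$ and the canonical trivialization \erf{eq:cantriv} are fusion-preserving, rather than from an informal \quot{compatibility with restriction to path segments}.
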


We will give two proofs of this theorem. The first proof is given in the next paragraphs: we construct an explicit model $\wzwmodel$ for the central extension $\lspinhat n$ and exhibit its fusion product. The second proof appears in  Section \ref{sec:fusionextensions} as Corollary \ref{co:fusionuniversal}; there we construct another  model that is less explicit but explains  better and from a more general context why this fusion product is present. There we  also show that the two constructions are canonically isomorphic (Proposition \ref{prop:equivext}).

Our explicit model $\wzwmodel$ is motivated by conformal field theory and has been introduced by Mickelsson \cite{mickelsson1}. It exists for any 2-connected Lie group $G$, such as $\spin n$.
We consider pairs $(\phi,z)$ where $\phi\maps D^2 \to G$ is a smooth map and $z\in \ueins$. For technical reasons, we require that $\phi$ is radially constant near the boundary, i.e.  there exists  $\varepsilon>0$ such that $\phi(r \mathrm{e}^{2\pi\im\varphi})=\phi(\mathrm{e}^{2\pi\im\varphi})$ for all  $1-\varepsilon<r\leq 1$. 
On the set of pairs we impose the following equivalence relation:
\begin{equation*}
(\phi,z) \sim (\phi',z') 
\quad\Leftrightarrow\quad
\partial\phi = \partial\phi'
\quad\text{ and }\quad
z = z' \cdot \mathrm{e}^{2\pi\im S_{\mathrm{WZ}}(\Phi)}\text{.}
\end{equation*}
Here, $\partial\phi$ denotes the restriction of $\phi$ to the boundary, and  $\Phi\maps  S^2 \to \spin n$ is the  map defined on the northern hemisphere by $\phi$ (with the orientation-preserving identification) and on the southern hemisphere by $\phi'$ (with the orientation-reversing identification). Due to the above technical assumptions, this gives a smooth map.
The symbol  $S_{\mathrm{WZ}}$ stands for the \emph{Wess-Zumino term}, which is defined as follows. Because $G$ is 2-connected, the map $\Phi$ can be extended to a smooth map $\tilde\Phi\maps  D^3 \to G$ defined on the solid ball. Then,
\begin{equation}
\label{eq:H}
S_{\mathrm{WZ}}(\Phi) \df \int_{D^3} \tilde\Phi^{*}H
\quith 
H\df\textstyle\frac{1}{6} \left \langle \theta \wedge [\theta \wedge \theta]  \right \rangle \in \Omega^3(G)\text{.}
\end{equation}
Here, $\theta \in \Omega^1(G,\mathfrak{g})$ is the left-invariant Maurer-Cartan from on $G$. The bilinear form $\left \langle -,-  \right \rangle$ is  normalized such that the closed 3-form $H$ represents the fixed generator $\generator \in \h^3(\spin n,\Z)$. Now, the total space of the principal $\ueins$-bundle $\wzwmodel$ of our model is the set of equivalence classes of above pairs:
\begin{equation*}
\wzwmodel \df \left \lbrace (\phi,z) \right \rbrace \;/\;\sim\text{.}
\end{equation*} 
The bundle projection sends $(\phi,z)$  to $\partial\phi\in LG$, and the $\ueins$-action is given by multiplication in the $\ueins$-component.

The group structure on $\wzwmodel$ turning it into a central extension is given by the  \emph{Mickelsson product}   \cite{mickelsson1}:
\begin{equation*}
\wzwmodel \times \wzwmodel \to \wzwmodel\maps  ((\phi_1, z_1), (\phi_2, z_2)) \mapsto (\phi_1\phi_2, z_1z_2 \cdot \exp \left ( -2\pi\im \int_{D^2} (\phi_1,\phi_2)^{*}\rho \right )  )  \text{,}
\end{equation*}
where $\rho$ is defined by
\begin{equation}
\label{eq:rho}
\textstyle
\rho \df  \frac{1}{2}\left \langle  \pr_1^{*}\theta \wedge \pr_2^{*}\bar\theta  \right \rangle \in \Omega^2(G \times G)\text{.}
\end{equation}
The two differential forms $H$ and $\rho$, which are the only parameters of the construction, satisfy the identities
\begin{equation*}
H_{g_1g_2} = H_{g_1}  + H_{g_2}- \mathrm{d}\rho_{g_1,g_2}
\quand
\rho_{g_1,g_2}  + \rho_{g_1g_2,g_3}  =  \rho_{g_2,g_3} +  \rho_{g_1,g_2g_3}
\end{equation*}
for all $g_1,g_2,g_3\in G$. 
The first identity assures that the Mickelsson product is well-defined on equivalence classes, and the second implies its associativity.

Now we come to the fusion product. For $(\gamma_1,\gamma_2,\gamma_3) \in PG^{[3]}$, we define
\begin{equation}
\label{eq:explicitfusionproduct}
\begin{array}{rcccl}
\lambda_{\gamma_1,\gamma_2,\gamma_3} &\maps&  \wzwmodel_{\gamma_1\lop\gamma_2} \otimes \wzwmodel_{\gamma_2\lop\gamma_3} &\to& \wzwmodel_{\gamma_1\lop\gamma_3}
\\ &&(\phi_{12},z_{12}) \otimes (\phi_{23},z_{23}) &\mapsto& (\phi_{13},z_{12}  z_{23}\cdot \mathrm{e}^{2\pi\im S_{\mathrm{WZ}}(\Psi)})\text{,} 
\end{array}
\end{equation}
where $\phi_{13}\maps  D^2 \to G$ is an arbitrarily chosen smooth map with  $\partial\phi_{13}=\gamma_1 \lop \gamma_3$, and $\Psi\maps  S^2 \to G$ is obtained by trisecting $S^2$ along the longitudes $0$, $\frac{2\pi}{3}$ and $\frac{4\pi}{3}$, and  prescribing $\Psi$ on each sector with the maps $\phi_{12}$, $\phi_{23}$ (with orientation-reversing identification) and $\phi_{13}$ (with orientation-preserving identification). This map $\Psi$ is  smooth due to the sitting instants of the paths and the requirement that the maps $\phi_{ij}$ are radially constant.

That definition \erf{eq:explicitfusionproduct} of the fusion product on $\wzwmodel$ is independent of the choice of $\phi_{13}$ follows from the identity $ S_{\mathrm{WZ}}(\Psi) =  S_{\mathrm{WZ}}(\Psi')S_{\mathrm{WZ}}(\Phi_{13})$ for Wess-Zumino terms, where $\Psi'$ is obtained as described above but using a different map $\phi_{13}'$ instead of $\phi_{13}$, and $\Phi_{13}$ is obtained in the way described earlier from $\phi_{13}$ and $\phi_{13}'$. 
Definition \erf{eq:explicitfusionproduct} is also well-defined under the equivalence relation $\sim$ due to a similar identity for Wess-Zumino terms.  
Finally, it is associative in the sense of Definition \ref{def:fusionproduct}.

Now that we have explained that $\lspinhat n$ is a fusion extension, we proceed with introducing our new version of a spin structure on $LM$. 

\begin{definition}
\label{def:fusionspinstructure}
A \emph{fusion spin structure} on $LM$ is a spin structure $(\inf S, \sigma)$ together with a fusion product $\lambda_{\inf S}$ on the associated principal $\ueins$-bundle $\sigma\maps  \inf S \to LFM$ of Lemma \ref{lem:ueinsbundle}, such that the $\lspinhat n$-action on $\inf S$ is fusion-preserving:
\begin{equation*}
\lambda_{\inf S}(q_{12}\cdot \beta_{12} \otimes q_{23} \cdot \beta_{23}) = \lambda_{\inf S}(q_{12} \otimes q_{23}) \cdot \lambda(\beta_{12} \otimes \beta_{23})\text{,}
\end{equation*}
where $\lambda$ is the fusion product of the fusion extension $\lspinhat n$, and
 $q_{12},q_{23} \in \inf S$, $\beta_{12},\beta_{23} \in \lspinhat n$ are supposed to be such that the fusion products are defined. 
\end{definition}

More explicitly, the condition for the elements  means that there exist paths $(\alpha_1,\alpha_2,\alpha_3) \in PFM^{[3]}$ and $(\gamma_1,\gamma_2,\gamma_3) \in P\spin n^{[3]}$ such that $\sigma(q_{ij}) = \alpha_i \lop \alpha_j$ and $p(\beta_{ij})=\gamma_i \lop \gamma_j$.

\setsecnumdepth 1

\section{Lifting Gerbes for Fusion Extensions}

\label{sec:generaltheory}

The main objective of this section is to  embed the definition of a fusion spin structure into a more general theory. The results we derive in this setting will be used in the proof of the main result. 

\setsecnumdepth 2

\subsection{Lifting Gerbes}

\label{sec:liftinggerbes}

\label{sec:liftinggerbesbasics}

We briefly review the theory of lifting bundle gerbes for the convenience of the reader, following \cite{Murray}. 
The setup is a central extension
\begin{equation}
\label{ce}
\alxydim{}{1 \ar[r] & A \ar[r] & \adjust{\hat G} \ar[r]^{t} & G \ar[r] & 1}
\end{equation} 
of (possibly Fréchet)  Lie  groups, and a  principal $G$-bundle $P$ over a (possibly Fréchet) manifold $X$. 

A \emph{$\hat G$-lift of $P$} is a principal $\hat G$-bundle $\hat P$ over $X$ together with a bundle map $f\maps  \hat P \to P$ satisfying $f(\hat p \cdot\hat g) = f(\hat p) \cdot t(\hat g)$ for all $\hat p \in \hat P$ and $\hat g\in \hat G$. $\hat G$-lifts of $P$ form a category $\struc {\hat G}P$. The existence of $\hat G$-lifts is obstructed by a class  $\xi_P \in \h^2(X,\sheaf A) $ that is obtained by locally lifting a \v Cech cocycle for $P$ and then measuring the error in the cocycle condition over triple overlaps.

\begin{example}
\label{ex:spin1}
If $P = LFM$ is the looping of the frame bundle of a spin manifold, and  $\hat G = \lspinhat n$ is the universal central extension  of $G=L\spin n$, then the spin structures on $LM$  of Section \ref{sec:fusionspinstructures} form precisely the category $\struc {\lspinhat n}{LFM}$. The obstruction class $\xi_{LFM}\in \h^2(LM,\ueinssheaf)$ can be identified with a class in $\h^3(LM,\Z)$; this is the spin class  $\lambda_{LM}$ of $LM$ that was mentioned in Section \ref{sec:motivation}.
\end{example}

Associated to the given bundle $P$ is the following bundle gerbe $\mathcal{G}_P$ over $X$, called the \emph{lifting bundle gerbe} \cite{Murray}. Its surjective submersion is the bundle projection $\pi\maps  P \to X$. Over the two-fold fibre product $P^{[2]} \df P \times_X P$, the lifting bundle gerbe has the  principal $A$-bundle $Q \df \diffr^{*}\hat G$, obtained by regarding $\hat G$ as a principal $A$-bundle over $G$, and pulling it back along the \quot{difference map} 
\begin{equation}
\label{eq:differencemap}
\diffr\maps P^{[2]} \to G
\quith
p \cdot \diffr(p,p') = p'\text{.}
\end{equation}
Finally, the multiplication of $\hat G$ defines a bundle gerbe product, i.e., a bundle isomorphism
\begin{equation}
\label{bgprod}
\mu\maps  \pr_{12}^{*}Q \otimes \pr_{23}^{*}Q \to \pr_{13}^{*}Q\maps  (p_1,p_2,g_{12}),(p_2,p_3,g_{23}) \mapsto (p_1,p_3,g_{12}g_{23})
\end{equation}
over $P^{[3]}$ that is associative over $P^{[4]}$. Here, $\pr_{ij}\maps  P^{[3]} \to P^{[2]}$ denote the projections to the indexed components. The Dixmier-Douady class of the lifting gerbe $\mathcal{G}_P$ is the obstruction class $\xi_P$ \cite{Murray}. 

\begin{example}
\label{ex:spinliftingM}
If $P=FM$ is the oriented frame bundle of an oriented Riemannian manifold, and the central extension is
\begin{equation*}
1 \to \Z_2 \to \spin n\to \so n \to 1\text{,}
\end{equation*}
the associated lifting gerbe $\mathcal{G}_{FM}$ is the \emph{spin lifting gerbe} of $M$. Its Dixmier-Douady class $\xi_{FM} \in \h^2(M,\Z_2)$ is the second Stiefel-Whitney class $w_2$.
\end{example}

A \emph{trivialization} of a bundle gerbe $\mathcal{G}$ is an isomorphism $\mathcal{T}\maps  \mathcal{G} \to \mathcal{I}$, where $\mathcal{I}$ denotes the trivial bundle gerbe \cite{waldorf1}. Trivializations form a category that we denote by $\triv {\mathcal{G}}$.
In case of the lifting bundle gerbe $\mathcal{G}_P$, a trivialization is a principal $A$-bundle $T$ over $P$ together with a bundle isomorphism
\begin{equation}
\label{triviso}
\kappa\maps  Q \otimes \pr_2^{*}T \to \pr_1^{*}T
\end{equation}
over $P^{[2]}$ satisfying a  compatibility condition with the bundle gerbe product $\mu$, namely
\begin{equation}
\label{eq:actioncondition}
\kappa(q_{12} \otimes \kappa(q_{23} \otimes t)) = \kappa(\mu(q_{12} \otimes q_{23}) \otimes t)
\end{equation}
for all $(p_1,p_2,p_3) \in P^{[3]}$ and all $t \in T_{p_3}$, $q_{12} \in Q_{p_1,p_2}$, and $q_{23}\in Q_{p_2,p_3}$.

Suppose $(T,\kappa)$ is such a trivialization of $\mathcal{G}_P$. 
Then, $\hat P \df T$ with the projection $T \to P \to  X$ and  the $\hat G$-action $\hat p \cdot \hat g \df \kappa(\hat g^{-1} \otimes \hat p)$ is a principal $\hat G$-bundle over $X$, and  together with the bundle projection $f\maps  T \to P$ it is a $\hat G$-lift of $P$.
Conversely, suppose $f\maps \hat P \to P$ is  a $\hat G$-lift of $P$. Then $T \df \hat P$ equipped with the map $f\maps  T \to P$ and the $A$-action induced by
\begin{equation*}
\alxydim{}{A \ar[r]^{i} & A \ar@{^(->}[r] & \hat G\text{,}}
\end{equation*} 
where $i$ is the inversion of the group $A$,  is a principal $A$-bundle over $P$, and together with the bundle morphism $\kappa$ defined by $\kappa(\hat g \otimes  \hat p) \df \hat p \cdot \hat g^{-1}$ a trivialization of $\mathcal{G}_P$.
The main theorem of lifting bundle theory states that these two constructions are inverse to each other:

\begin{theorem}[{{\cite{Murray}}}]
\label{th:lifting}
Let $P$ be a principal $G$-bundle over $X$. Then,  above  constructions constitute an equivalence of categories,
\begin{equation*}
\triv {\mathcal{G}_P} \cong \struc {\hat G}P\text{.}
\end{equation*}
\end{theorem}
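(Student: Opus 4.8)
The plan is to promote the two object-level constructions already described in the text -- from a trivialization $(T,\kappa)$ to a $\hat G$-lift $(\hat P,f)$ and back -- to a pair of functors, and then to check that they are mutually inverse. Since both constructions leave the underlying total space unchanged ($\hat P = T$ in each direction) and merely reinterpret the structure actions, I expect the correspondence to be in fact an isomorphism of categories; proving the weaker statement claimed in the theorem then reduces to bookkeeping. Throughout I may assume the object-level facts whose verifications are indicated in the excerpt: that $\hat p \cdot \hat g \df \kappa(\hat g^{-1} \otimes \hat p)$ defines a principal $\hat G$-action, that the resulting projection is a submersion, and that the reconstructed $\kappa$ of the reverse construction lands in the correct fibres.

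First I would define the two functors on morphisms and observe that they describe the \emph{same} underlying maps. A morphism of trivializations $\psi\maps (T_1,\kappa_1) \to (T_2,\kappa_2)$ is an $A$-equivariant bundle map over $P$ intertwining $\kappa_1$ and $\kappa_2$, while a morphism of lifts $\varphi\maps (\hat P_1,f_1) \to (\hat P_2,f_2)$ is a $\hat G$-equivariant bundle map over $X$ with $f_1 = f_2 \circ \varphi$. In the direction trivialization $\to$ lift the $\hat G$-action is built from $\kappa$ via $\hat p \cdot \hat g = \kappa(\hat g^{-1}\otimes\hat p)$, so a map intertwining the $\kappa_i$ automatically intertwines the $\hat G$-actions; and since $f_i$ is merely the bundle projection $T_i \to P$, being a map over $P$ is the same as satisfying $f_1 = f_2 \circ \varphi$. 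In the reverse direction, where $\kappa(\hat g \otimes \hat p) = \hat p \cdot \hat g^{-1}$, a $\hat G$-equivariant map respecting the $f_i$ is a map over $P$ that automatically intertwines the $\kappa_i$. Functoriality is then immediate, because each functor acts as the identity on underlying maps and composition of bundle maps is preserved.

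Next I would verify that the two round trips are the identity. Starting from $(T,\kappa)$, passing to the lift with action $\hat p \cdot \hat g = \kappa(\hat g^{-1}\otimes\hat p)$ and returning produces the trivialization with $\kappa'(\hat g \otimes \hat p) = \hat p \cdot \hat g^{-1} = \kappa(\hat g \otimes \hat p)$, so $\kappa' = \kappa$; symmetrically, starting from a lift $(\hat P,f)$ and returning recovers the original $\hat G$-action, since the reconstructed action $\kappa(\hat g^{-1}\otimes\hat p)$ equals $\hat p \cdot (\hat g^{-1})^{-1} = \hat p \cdot \hat g$. The one point requiring care is that the principal $A$-bundle structure on $T$ is reproduced faithfully: in forming the lift, $A \subset \hat G$ acts on $T$ through the inversion $i$, and re-extracting the trivialization twists by $i$ once more, so the two inversions must cancel to return the original $A$-action. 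I would pin this down using the $A$-equivariance of $\kappa$ and its unital behaviour over the diagonal of $P^{[2]}$, where $\delta(p,p)=1$ and hence $Q$ restricts to $\hat G_1 \cong A$.

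The main obstacle is exactly this last bookkeeping: keeping straight, over the fibres of $P^{[2]}$ determined by the difference map $\delta$ and the projections $\pr_1,\pr_2$, how the inverted $A$- and $\hat G$-actions appearing in the definitions of $\kappa$ and of $\hat p \cdot \hat g$ interact, so that the compatibility condition \erf{eq:actioncondition} translates precisely into associativity of the reconstructed $\hat G$-action and conversely. Once the actions are matched, both essential surjectivity and fully-faithfulness follow at once from the morphism correspondence of the second paragraph, which identifies the hom-sets of the two categories; the remaining smoothness and local-triviality checks for the reconstructed bundles are the routine verifications already recorded in the text.
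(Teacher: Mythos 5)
Your proposal is correct and takes essentially the same route as the paper: the paper (following Murray) uses exactly these two constructions, notes that they leave the underlying total space and projection unchanged, and verifies the round trips -- in particular the double-inversion cancellation in the $A$-action that you flag is settled precisely by the $A$-equivariance of $\kappa$ together with $\kappa(1_{\hat G}\otimes t)=t$ over the diagonal, which follows from the compatibility condition \erf{eq:actioncondition}. Your further observation that the correspondence is in fact a strict isomorphism of categories, not merely an equivalence, is a correct (and harmless) strengthening of the stated result.
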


\begin{example}
\label{ex:spinliftinggerbe}
In the situation of Example \ref{ex:spin1}, the lifting bundle gerbe is denoted $\mathcal{S}_{LM}$ and called the \emph{spin lifting gerbe}. 
Its Dixmier-Douady class satisfies
\begin{equation*}
\mathrm{dd}(\mathcal{S}_{LM}) + \lambda_{LM} = 0\text{.}
\end{equation*} 
Theorem \ref{th:lifting} implies an equivalence:
\begin{equation*}
\bigset{7em}{Spin structures on $LM$}
\cong 
\bigset{10em}{Trivializations of the spin lifting gerbe $\mathcal{S}_{LM}$}\text{.}
\end{equation*}
Under this equivalence, a spin structure $(\inf S,\sigma)$ corresponds to a trivialization $(T,\kappa)$ of $\mathcal{S}_{LM}$ whose principal $\ueins$-bundle $T$ is the one of Lemma \ref{lem:ueinsbundle}.
\end{example}

\subsection{Transgression and Regression}
\label{sec:fusion}

In this section we explain the role of fusion products from a more general perspective. Based on Definition \ref{def:fusionproduct}, fusion bundles with structure group $A$ over the loop space $LX$ of a  smooth manifold $X$ form a category that we denote by $\fusbun A{LX}$. 
These categories are monoidal and natural with respect to looped maps, i.e., if $f\maps  X \to Y$ is a smooth map between diffeological spaces, and $Lf\maps  LX \to LY$ denotes the induced map on loop spaces, pullback is a functor
\begin{equation}
\label{eq:fusionpullback}
Lf^{*}\maps  \fusbun A{LY} \to \fusbun A{LX}\text{.}
\end{equation}

Basically, fusion bundles are those bundles over $LX$ that correspond to bundle gerbes over $X$. This can be seen in both ways. Firstly,  fusion products furnish a \emph{regression functor}
\begin{equation}
\label{eq:regressionfunctor}
\un_x\maps  \fusbun  A{LX} \to  \grb AX
\end{equation}
that lands in the 2-category of bundle gerbes over $X$, see \cite[Section 5.1]{waldorf10}. It is defined for connected manifolds $X$ and depends on the choice of a base point $x\in X$ (up to canonical, natural equivalence).
This functor is weak (i.e., it has non-trivial compositor 2-morphisms), but  induces a honest functor into the  category $\hc 1 \grb AX$ obtained from the 2-category $\grb AX$ by identifying 2-isomorphism 1-morphisms. We remark two evident properties:

\begin{lemma}
\label{lem:regprop}
The functor $\un_x$ has the following properties:
\begin{enumerate}[(i)]

\item 
it is monoidal.

\item
it is natural with respect to smooth, base-point-preserving maps.

\end{enumerate}
\end{lemma}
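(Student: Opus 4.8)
The plan is to unwind the explicit construction of the regression functor $\un_x$ from \cite[Section 5.1]{waldorf10} and then observe that both assertions drop out of the fact that the construction is assembled entirely from pullbacks and the fusion product. Recall that $\un_x(\inf E)$ is the bundle gerbe whose surjective submersion is the endpoint evaluation $\ev\maps \px X x \to X$ on the space of paths starting at the base point $x$, whose principal $A$-bundle over the double fibre product is $\lop^{*}\inf E$ (where $\lop\maps \px X x^{[2]} \to LX$ sends a pair $(\gamma_1,\gamma_2)$ of such paths with common end point to the loop $\gamma_1 \lop \gamma_2$), and whose bundle gerbe product over the triple fibre product is the fusion product $\lambda$ evaluated on the triples $(\gamma_1,\gamma_2,\gamma_3) \in \px X x^{[3]} \subset PX^{[3]}$. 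I would first record this description carefully, including the action of $\un_x$ on morphisms of fusion bundles.

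For (i), I would note that $\un_x$ uses the same surjective submersion $\ev$ for every input, so forming a tensor product leaves the base data untouched. The bundle of $\un_x(\inf E_1 \otimes \inf E_2)$ over $\px X x^{[2]}$ is $\lop^{*}(\inf E_1 \otimes \inf E_2) = \lop^{*}\inf E_1 \otimes \lop^{*}\inf E_2$, and the fusion product of a tensor product is by construction $\lambda_1 \otimes \lambda_2$, so the gerbe product is the tensor of the two individual products. This exhibits a canonical identification $\un_x(\inf E_1 \otimes \inf E_2) \cong \un_x(\inf E_1) \otimes \un_x(\inf E_2)$ over the identity refinement, with the trivial fusion bundle mapping to the trivial gerbe. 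The remaining task is to verify that these identifications respect the gerbe products over $\px X x^{[3]}$ and obey the monoidal coherence constraints; both are immediate because every comparison morphism in sight is an identity of pullback bundles, so associativity and unit coherence are inherited from those of the tensor product of $A$-bundles.

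For (ii), given a base-point-preserving smooth map $f\maps X \to Y$ with $f(x)=y$, I would use that $f$ restricts to a map $\px f x\maps \px X x \to \px Y y$ of based path spaces satisfying $\ev^{Y} \circ \px f x = f \circ \ev^{X}$; this is precisely a morphism of surjective submersions covering $f$, hence the datum along which the pullback gerbe $f^{*}\un_y(\inf E)$ is to be compared with $\un_x(Lf^{*}\inf E)$. On double fibre products one has $\lop \circ (\px f x)^{[2]} = Lf \circ \lop$, so the bundle of $\un_x(Lf^{*}\inf E)$ is $\lop^{*}Lf^{*}\inf E = ((\px f x)^{[2]})^{*}\lop^{*}\inf E$, i.e. the pullback of the bundle of $\un_y(\inf E)$; and the gerbe product of $\un_x(Lf^{*}\inf E)$ is the fusion product of $Lf^{*}\inf E$, which by naturality of fusion products under looped maps \erf{eq:fusionpullback} equals the pullback of the fusion product of $\inf E$. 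Assembling these identities yields a canonical isomorphism $\un_x(Lf^{*}\inf E) \cong f^{*}\un_y(\inf E)$, which I would then check is natural in $\inf E$ and compatible with composition of maps.

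The main obstacle I anticipate is bookkeeping rather than mathematical depth: since $\un_x$ is only a weak $2$-functor, both \emph{monoidal} and \emph{natural} must be read together with the appropriate canonical coherence data, and the genuine content lies in verifying that the constituent identifications of pullback bundles are compatible with the bundle gerbe products over the triple fibre products $\px X x^{[3]}$. These compatibilities reduce to the defining associativity of the fusion products (Definition \ref{def:fusionproduct}) and their functoriality \erf{eq:fusionpullback}, so no new computation is required; one simply has to organize the commuting squares of pullbacks correctly. A minor point I would make explicit is that fixing $y=f(x)$ removes the base-point ambiguity in $\un_{(-)}$, so that the naturality statement is canonical and not merely up to the equivalences relating different choices of base point.
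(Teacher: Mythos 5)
Your proof is correct and is precisely the justification the paper intends: the paper states this lemma without proof (introducing it as \quot{two evident properties} of the construction cited from \cite[Section 5.1]{waldorf10}), and your unwinding — regression uses the based path space $P_xX$ with endpoint evaluation for every input, its bundle and gerbe product are the pullback along $\lop$ of the fusion bundle and its fusion product, so monoidality follows from compatibility of pullback with tensor products and naturality from $\lop \circ (P_xf)^{[2]} = Lf \circ \lop$ together with \erf{eq:fusionpullback} — is exactly that intended argument. The only glossed point, that the comparison maps are canonical refinement morphisms rather than literal identities (e.g.\ $\un_x(\inf E_1)\otimes\un_x(\inf E_2)$ has submersion $P_xX\times_X P_xX$, and $f^{*}\un_y(\inf E)$ has submersion $X\times_Y P_yY$), is harmless since you correctly note that everything is read with coherence data, respectively in $\hc 1 \grb AX$.
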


Secondly, in the other direction, there is a \emph{transgression functor}
\begin{equation}
\label{eq:transgression}
\tr\maps  \hc 1 \grbcon AX \to \fusbun A{LX}
\end{equation}
defined on the  category of $A$-bundle gerbes with connections over $X$.
It has been introduced by Brylinski and McLaughlin \cite{brylinski4} and lifted to fusion bundles in \cite[Section 4.2]{waldorf10}. On the level of characteristic classes, it satisfies
\begin{equation}
\label{eq:transgressionclasses}
\mathrm{c}_1(\tr_{\mathcal{G}}) = - \tau(\mathrm{dd}(\mathcal{G}))\text{.}
\end{equation}

We shall describe some details of the transgression functor following \cite{waldorf5,waldorf10}. 
If $\mathcal{G}$ is a bundle gerbe with connection over $X$, the fibre of $\tr_\mathcal{G}$ over a loop $\tau \in LX$ is
\begin{equation}
\label{eq:lgfibre}
\tr_\mathcal{G}|_{\tau} \df \hc 0 \trivcon{\tau^{*}\mathcal{G}}\text{,}
\end{equation} 
i.e. it consists of isomorphism classes of connection-preserving trivializations of $\tau^{*}\mathcal{G}$. A connection-preserving isomorphism $\mathcal{A}\maps \mathcal{G}_1 \to \mathcal{G}_2$ induces a  bundle morphism given by
\begin{equation}
\label{eq:transmorph}
\tr_{\mathcal{A}}\maps  \tr_{\mathcal{G}_1} \to \tr_{\mathcal{G}_2}\maps  \mathcal{T} \mapsto \mathcal{T} \circ \tau^{*}\mathcal{A}^{-1}\text{.}
\end{equation}

The lift of this construction into the category of fusion bundles over $LX$ is established by recognizing a fusion product $\lambda_{\mathcal{G}}$ on the bundle $\tr_{\mathcal{G}}$. 
Let us briefly recall how $\lambda_{\mathcal{G}}$ is characterized. We denote by $\iota_1,\iota_2\maps  [0,1] \to S^1$ the inclusion of the interval into the left and the right half of the circle. Let $(\gamma_1,\gamma_2,\gamma_3)$ be a triple of paths with a common initial point $x$ and a common end point $y$, and let $\mathcal{T}_{ij}$ be trivializations of the pullback of $\mathcal{G}$ to the loops $\gamma_i \lop \gamma_j$, for  $(ij)=(12),(23),(13)$. Then,
the relation
\begin{equation}
\label{eq:fusiondef}
\lambda_{\mathcal{G}}(\mathcal{T}_{12} \otimes \mathcal{T}_{23}) = \mathcal{T}_{13}
\end{equation}
holds if and only if there exist 2-isomorphisms
\begin{equation*}
\phi_1\maps  \iota_1^{*}\mathcal{T}_{12} \Rightarrow \iota_1^{*}\mathcal{T}_{13}
\quomma
\phi_2\maps  \iota_2^{*}\mathcal{T}_{12} \Rightarrow \iota_1^{*}\mathcal{T}_{23}
\quand
\phi_3\maps  \iota_2^{*}\mathcal{T}_{23} \Rightarrow \iota_2^{*}\mathcal{T}_{13}
\end{equation*}
between trivializations of the pullbacks of $\mathcal{G}$ to the paths $\gamma_1$, $\gamma_2$, and $\gamma_3$, respectively, such that their restrictions to the two common points $x$ and $y$ satisfy the cocycle condition
$\phi_1 = \phi_3 \circ \phi_2$.

\begin{example}
The orientation bundle $\inf O_{LM}$ of the loop space, which was mentioned in Section \ref{sec:orientations}, is the transgression of the spin lifting gerbe $\mathcal{G}_{FM}$ of $M$, see Example \ref{ex:spinliftingM}. This explains the existence of a fusion product on $\inf O_{LM}$, independently of its discovery by Stolz and Teichner \cite{stolz3}.
\end{example}

The regression functor $\un_x$ and the transgression functor $\tr$ are inverse to each other, in a way that has various formulations; for the purpose of this article we need:

\begin{theorem}
\label{th:transgressionregression}
The diagram
\begin{equation*}
\alxydim{@R=1.6cm}{ & \fusbun A{LX} \ar[dr]^-{\un_x} & \\ \hc 1 \grbcon AX \ar[ur]^-{\tr} \ar[rr] &&  \hc 1 \grb AX }
\end{equation*}
of functors, which has on the bottom the functor that forgets connections, is commutative up to a canonical natural equivalence. 
\end{theorem}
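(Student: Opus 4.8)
The plan is to construct, for every bundle gerbe with connection $\mathcal{G}$ over $X$, a canonical isomorphism $\eta_{\mathcal{G}}\maps \un_x(\tr_{\mathcal{G}}) \to \mathcal{G}$ in $\hc 1 \grb AX$ (a $2$-isomorphism class of invertible $1$-morphisms of bundle gerbes, with the connection on $\mathcal{G}$ forgotten), and then to check that these components are natural in $\mathcal{G}$. Working in the homotopy $1$-categories is essential: passing to $\hc 1$ on the gerbe side collapses the compositor $2$-morphisms of the \emph{weak} regression functor $\un_x$ and lets the comparison be verified only up to $2$-isomorphism, which is exactly what turns $\un_x \circ \tr$ and the connection-forgetting functor into honest functors that one can compare by a natural transformation.

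First I would unwind both constructions over a common refinement. Let $\ev\maps P_x X \to X$ be the endpoint evaluation on the based path space of paths starting at $x$. By the definition of $\un_x$ from \erf{eq:regressionfunctor}, the regression $\un_x(\tr_{\mathcal{G}})$ is the bundle gerbe with surjective submersion $\ev$ whose $A$-bundle over a pair $(\beta_1,\beta_2) \in P_x X^{[2]}$ is the transgression fibre $\hc 0 \trivcon{(\beta_1 \lop \beta_2)^{*}\mathcal{G}}$ of \erf{eq:lgfibre}, with gerbe multiplication induced by the fusion product $\lambda_{\mathcal{G}}$. Writing $\pi\maps Y \to X$ and $L \to Y^{[2]}$ for the surjective submersion and the $A$-bundle of $\mathcal{G}$, I would realise $\eta_{\mathcal{G}}$ as a $1$-morphism over the common refinement $Z \df P_x X \times_X Y$ of the two surjective submersions; its defining $A$-bundle over $Z$ and its intertwiner over $Z^{[2]}$ are extracted from the tautological datum carried by a point of the transgression fibre, namely a connection-preserving trivialization of $\mathcal{G}$ along the loop $\beta_1 \lop \beta_2$, restricted along the two half-circle inclusions $\iota_1,\iota_2\maps [0,1] \to S^1$ so as to relate the fibres of $L$ at the chosen lifts in $Y$.

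The technical heart, and the step I expect to be the main obstacle, is matching the two gerbe multiplications. On the one side stands the fusion product $\lambda_{\mathcal{G}}$, which by its characterization \erf{eq:fusiondef} is governed by the cocycle relation $\phi_1 = \phi_3 \circ \phi_2$ among $2$-isomorphisms of trivializations split along $\iota_1,\iota_2$; on the other stands the genuine gerbe product $\mu$ of $\mathcal{G}$ over triple fibre products. Reconciling them demands a careful bookkeeping of the path-splitting at the common initial point $x$ and the common endpoint, and of how the three half-circle restrictions assemble over a triple $(\beta_1,\beta_2,\beta_3)$ of based paths; this is precisely the cocycle computation underlying the transgression--regression inverse-equivalence of \cite{waldorf10,waldorf11}, which I would either cite directly or reproduce in the form needed here. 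It is also at this point that forgetting the connection is forced: since $\un_x$ produces no connection, $\eta_{\mathcal{G}}$ can only be an isomorphism of \emph{bare} bundle gerbes, and the connection of $\mathcal{G}$ is consumed in defining $\tr_{\mathcal{G}}$ through connection-preserving trivializations.

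Finally I would verify naturality of $\eta$ in the gerbe variable. For a morphism $\mathcal{A}\maps \mathcal{G}_1 \to \mathcal{G}_2$ in $\hc 1 \grbcon AX$, transgression yields the bundle morphism $\tr_{\mathcal{A}}$ of \erf{eq:transmorph} by post-composition of trivializations, and one checks directly from the fibrewise formulas \erf{eq:lgfibre} and \erf{eq:transmorph} that $\eta_{\mathcal{G}_2} \circ \un_x(\tr_{\mathcal{A}})$ and $\mathcal{A} \circ \eta_{\mathcal{G}_1}$ agree in $\hc 1 \grb AX$. Canonicity, monoidality, and independence of the base point then follow from Lemma \ref{lem:regprop} together with the pullback-naturality \erf{eq:fusionpullback} of transgression, which show that $\eta$ is compatible with pullback along base-point-preserving maps and with the canonical equivalences relating regression functors for different base points. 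Assembling these steps exhibits $\{\eta_{\mathcal{G}}\}$ as the required canonical natural equivalence.
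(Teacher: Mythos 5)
The paper does not actually prove this theorem: it is stated as a fact, and the remark immediately following it defers to the main results of \cite{waldorf10,waldorf11}. Your proposal is therefore not competing with an internal argument; it reconstructs, in outline, the proof that lives in those references, and at its technical heart --- the canonical isomorphism $\un_x(\tr_{\mathcal{G}}) \cong \mathcal{G}$, i.e.\ the reconciliation of the fusion product $\lambda_{\mathcal{G}}$ with the gerbe product of $\mathcal{G}$ --- you explicitly fall back on citing or reproducing exactly those results. In substance, then, your proof and the paper's rest on the same foundation; your added scaffolding (componentwise construction of $\eta_{\mathcal{G}}$ over a common refinement, the naturality check against \erf{eq:transmorph}, base-point independence via Lemma \ref{lem:regprop} and \erf{eq:fusionpullback}) is the right shape for promoting that citation to the stated natural equivalence, and your observation that passing to $\hc 1$ is what makes the comparison of honest functors possible matches the paper's own remark about the weakness of $\un_x$.

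One imprecision you should repair before writing this out: over $Z \df P_xX \times_X Y$ a point is a single based path $\beta$ together with a lift $y \in Y$ of its endpoint, so no loop $\beta_1 \lop \beta_2$ is available there. The transgression fibres $\hc 0 \trivcon{(\beta_1 \lop \beta_2)^{*}\mathcal{G}}$ of \erf{eq:lgfibre} occur only over $P_xX^{[2]}$, hence they enter the intertwiner of $\eta_{\mathcal{G}}$ over $Z \times_X Z$, not its $A$-bundle over $Z$. That bundle has to be built from (isomorphism classes of) connection-preserving trivializations of $\beta^{*}\mathcal{G}$ over the interval, anchored at $y$ over the endpoint; the half-circle restrictions along $\iota_1,\iota_2$ then mediate between such path trivializations and the loop trivializations precisely when one verifies compatibility with the two multiplications --- which is the bookkeeping you correctly identify as the crux.
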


I remark that transgression and regression can be turned into a honest equivalence of categories, by either  including the connection on the  loop space side or dropping the connections on the gerbes, see the main results of  \cite{waldorf10,waldorf11}.

\subsection{Multiplicative Gerbes and Fusion Extensions}

\label{sec:fusionextensions}

In this section we  explain how  fusion extensions (Definition \ref{def:fusionextension}) can be obtained by transgression of multiplicative gerbes. Let $A$ be an abelian Lie group. We recall from  \cite[Definition 1.3]{waldorf5} that a \emph{multiplicative $A$-bundle gerbe with connection} over a  Lie group $G$ is a triple $(\mathcal{G},\rho,\mathcal{M},\alpha)$ consisting of an $A$-bundle gerbe $\mathcal{G}$ with connection over $G$  together with a 2-form $\rho\in \Omega^2(G\times G,\mathfrak{a})$ with values in the Lie algebra $\mathfrak{a}$ of $A$,  a connection-preserving isomorphism
\begin{equation}
\label{eq:multstr}
\mathcal{M}\maps  \pr_1^{*}\mathcal{G} \otimes \pr_2^{*}\mathcal{G}
\to m^{*}\mathcal{G} \otimes \mathcal{I}_{\rho}
\end{equation}
between gerbes over $G \times G$, and a connection-preserving transformation
$\alpha$
over $G \times G \times G$, that serves as an associator for the multiplication \erf{eq:multstr} and satisfies
the   pentagon axiom. In \erf{eq:multstr} we have denoted by $\mathcal{I}_{\rho}$ the trivial bundle gerbe equipped with the  curving 2-form $\rho$. That \erf{eq:multstr} is connection-preserving implies for $H\df\mathrm{curv}(\mathcal{G}) \in \Omega^3(G)$ the identity 
\begin{equation}
\label{eq:id1}
H_{g_1g_2} = H_{g_1}  + H_{g_2}- \mathrm{d}\rho_{g_1,g_2}
\end{equation}
and the existence of $\alpha$ implies the identity
\begin{equation}
\label{eq:id2}
\rho_{g_1,g_2}  + \rho_{g_1g_2,g_3}  =  \rho_{g_2,g_3} +  \rho_{g_1,g_2g_3}\text{.}
\end{equation}

Let us first explain how the central extension is produced. The transgression of $\mathcal{G}$ is a principal $A$-bundle $\inf G \df \tr_{\mathcal{G}}$ over $LG$. Next, the transgression of $\mathcal{M}$ is a bundle isomorphism
\begin{equation*}
\tr_{\mathcal{M}}\maps \pr_1^{*}\inf G \otimes \pr_2^{*}\inf G \to m^{*}\inf G \otimes \tr_{\mathcal{I}_{\rho}}\text{.}
\end{equation*}
As described in \cite[Section 3.1]{waldorf5} the $\ueins$-bundle $\tr_{\mathcal{I}_{\rho}}$ has a canonical trivialization
\begin{equation}
\label{eq:cantriv}
t_{\rho}\maps  \tr_{\mathcal{I}_{\rho}} \to \trivlin\text{.}
\end{equation}
Together, we obtain a bundle isomorphism
\begin{equation}
\label{eq:transmult}
\alxydim{@C=1.5cm}{\pr_1^{*}\inf G \otimes \pr_2^{*}\inf G \ar[r]^-{\tr_{\mathcal{M}}} & m^{*}\inf G \otimes \tr_{\mathcal{I}_{\rho}} \ar[r]^-{\id \otimes t_{\rho}} & m^{*}\inf G\text{.}}
\end{equation}
It defines a smooth map $\tilde m\maps  \inf G \times \inf G \to \inf G$ that covers the multiplication of $LG$ along the projection $\inf G \to LG$. The transgression of the associator $\alpha$ guarantees that $\tilde m$ is associative. Then, the principal $A$-bundle $\inf G$ together with this product becomes a central extension
\begin{equation*}
1 \to A \to \inf G \to LG \to 1
\end{equation*}
of Fréchet Lie groups  \cite[Theorem 3.1.7]{waldorf5}.

We notice that  $\inf G = \tr_{\mathcal{G}}$ is a fusion bundle, see \erf{eq:transgression}. The bundle morphism \erf{eq:transmult} is fusion-preserving, since the isomorphism $\mathcal{M}$ transgresses to a fusion-preserving bundle morphism, and the trivialization \erf{eq:cantriv} is fusion-preserving \cite[Lemma 3.6]{waldorf13}. This shows that the fusion product of $\inf G$ is multiplicative
in the sense of Definition \ref{def:fusionextension}. Summarizing, we have shown:

\begin{theorem}
\label{th:fusionextension}
Let $\mathcal{G}$ be a multiplicative $A$-bundle gerbe with connection over $G$. Then, $\tr_{\mathcal{G}}$ is a fusion extension of $LG$ by $A$.
\end{theorem}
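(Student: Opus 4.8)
The plan is to read off the two pieces of a fusion extension (Definition \ref{def:fusionextension}) --- a central extension of Fr\'echet Lie groups and a multiplicative fusion product on its underlying $A$-bundle --- directly from the transgression machinery of Section~\ref{sec:fusion}. The underlying $A$-bundle $\inf G \df \tr_{\mathcal{G}}$ over $LG$ is the output of the transgression functor \erf{eq:transgression}, which by construction takes values in $\fusbun A{LG}$; hence $\inf G$ automatically carries a fusion product $\lambda_{\mathcal{G}}$ even before any group structure is imposed. It therefore remains only to produce the multiplication and to check that it is compatible with $\lambda_{\mathcal{G}}$ in the sense of \erf{eq:multfus}.

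First I would build the central extension. Transgressing the multiplicative isomorphism $\mathcal{M}$ gives a bundle isomorphism $\tr_{\mathcal{M}}$, and composing it with the canonical trivialization $t_{\rho}$ of \erf{eq:cantriv} yields the isomorphism \erf{eq:transmult}; this is precisely a smooth fibrewise multiplication $\tilde m\maps \inf G \times \inf G \to \inf G$ covering the multiplication of $LG$. Transgressing the associator $\alpha$, whose pentagon identity encodes the coherence, shows $\tilde m$ is associative, while the identities \erf{eq:id1} and \erf{eq:id2} forced by $\mathcal{M}$ and $\alpha$ are exactly what make this step work. I would then invoke \cite[Theorem 3.1.7]{waldorf5} to conclude that $\inf G$ equipped with the product $\tilde m$ is a central extension $1 \to A \to \inf G \to LG \to 1$ of Fr\'echet Lie groups.

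The substantive step, and the one I expect to be the main obstacle, is to verify that $\lambda_{\mathcal{G}}$ is \emph{multiplicative}. Unwinding \erf{eq:multfus}, this says exactly that the multiplication $\tilde m$ is a fusion-preserving morphism of bundles over $LG$. I would prove this by factoring $\tilde m$ through \erf{eq:transmult} and checking fusion-preservation factor by factor: the transgression $\tr_{\mathcal{M}}$ of a gerbe isomorphism is automatically fusion-preserving, because transgression is a functor landing in $\fusbun A{LG}$, and the canonical trivialization $t_{\rho}$ is fusion-preserving by \cite[Lemma 3.6]{waldorf13}. Since a composite of fusion-preserving morphisms is again fusion-preserving, so is $\tilde m$. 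The only genuine bookkeeping is to translate this abstract morphism-level statement into the elementwise identity \erf{eq:multfus} over triples in $PG^{[3]}$, taking care that the fusion products induced on $\pr_1^{*}\inf G \otimes \pr_2^{*}\inf G$ and on $m^{*}\inf G$ are those determined by $\lambda_{\mathcal{G}}$. Once this compatibility is in place, both clauses of Definition \ref{def:fusionextension} hold and the theorem follows.
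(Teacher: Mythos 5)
Your proposal is correct and follows essentially the same route as the paper: build the extension by transgressing $\mathcal{M}$, composing with the canonical trivialization $t_{\rho}$ to get \erf{eq:transmult}, invoking \cite[Theorem 3.1.7]{waldorf5} for the Fr\'echet central extension, and then verifying multiplicativity of the fusion product by observing that $\tr_{\mathcal{M}}$ is fusion-preserving (transgression lands in fusion bundles) and $t_{\rho}$ is fusion-preserving by \cite[Lemma 3.6]{waldorf13}. The paper's own argument is exactly this factor-by-factor check of \erf{eq:transmult}, so no comparison beyond this is needed.
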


Suppose that $G$ is a compact, connected, simple and simply-connected Lie group. Let $\generator \in \h^3(G,\Z)$  be a  generator. There is a canonical multiplicative bundle gerbe with connection, the \emph{basic bundle gerbe} $\gbas$, whose Dixmier-Douady class is $\generator$, see \cite[Example 1.5]{waldorf5} and \cite{Waldorf}.  Since
\begin{equation*}
\mathrm{c}_1(\tr_{\gbas}) \stackerf{eq:transgressionclasses}{=} -\tau(\mathrm{dd}(\gbas)) = -\tau(\generator)
\end{equation*}  
we have according to our sign convention \erf{eq:signconvention}:

\begin{corollary}
\label{co:fusionuniversal}
Suppose that $G$ is a compact, connected, simple and simply-connected Lie group. Then, 
$\tr_{\gbas}$ is the universal central extension $\widetilde{LG}$. In particular, the universal central extension of $LG$ is a fusion extension.
\end{corollary}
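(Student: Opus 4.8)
The plan is to read the statement off from the two ingredients already in place — Theorem \ref{th:fusionextension} and the Chern-class computation displayed immediately above the corollary — the only genuinely new point being the recognition of the resulting central extension as the \emph{universal} one. First I would apply Theorem \ref{th:fusionextension} to the basic bundle gerbe $\gbas$: since $\gbas$ is a multiplicative $\ueins$-bundle gerbe with connection over $G$, its transgression $\tr_{\gbas}$ is a fusion extension of $LG$ by $\ueins$, and in particular a central extension
\begin{equation*}
1 \to \ueins \to \tr_{\gbas} \to LG \to 1
\end{equation*}
of Fr\'echet Lie groups. Once this extension is identified with $\widetilde{LG}$, the closing \quot{in particular} clause is automatic, since the fusion product furnished by Theorem \ref{th:fusionextension} then exhibits $\widetilde{LG}$ as a fusion extension.

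To perform the identification I would record, as in the line preceding the corollary, that the first Chern class of $\tr_{\gbas}$ as a principal $\ueins$-bundle over $LG$ is $\mathrm{c}_1(\tr_{\gbas}) = -\tau(\mathrm{dd}(\gbas)) = -\tau(\generator)$, using \erf{eq:transgressionclasses} and $\mathrm{dd}(\gbas)=\generator$. The decisive classical input for a compact, connected, simple, simply-connected $G$ is twofold. On the one hand the transgression $\tau\maps \h^3(G,\Z) \to \h^2(LG,\Z)$ is an isomorphism of groups each isomorphic to $\Z$ — this uses that $G$ is $2$-connected, so that $LG \cong G \times \Omega G$ and $\tau$ restricts to the suspension isomorphism $\h^3(G,\Z) \cong \h^2(\Omega G,\Z)$ — whence $-\tau(\generator)$ is a generator of $\h^2(LG,\Z)$. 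On the other hand, by Pressley--Segal \cite{pressley1}, central extensions of $LG$ by $\ueins$ are classified, as central extensions, by their first Chern class, and the universal extension $\widetilde{LG}$ is the one whose class is the generator fixed by the convention $\tau(\generator) + \mathrm{c}_1(\widetilde{LG}) = 0$ (the analogue of \erf{eq:signconvention}). Since $\tr_{\gbas}$ and $\widetilde{LG}$ are then central extensions of $LG$ by $\ueins$ with the same first Chern class $-\tau(\generator)$, the classification forces $\tr_{\gbas} = \widetilde{LG}$.

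The hard part will be this final step: justifying that the first Chern class determines a central extension of $LG$ \emph{as a central extension}, and not merely as a topological $\ueins$-bundle. This is exactly where the Pressley--Segal classification does the real work, pinning the extension down by its integer level, which under $\tau^{-1}$ corresponds to the generator of $\h^3(G,\Z)$. Everything else reduces to the standard transgression facts recalled above and to the bookkeeping of orientations and signs needed to match the convention \erf{eq:signconvention}.
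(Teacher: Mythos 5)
Your proof is correct and follows essentially the same route as the paper: apply Theorem \ref{th:fusionextension} to $\gbas$, compute $\mathrm{c}_1(\tr_{\gbas}) = -\tau(\generator)$ via \erf{eq:transgressionclasses}, and identify the result with $\widetilde{LG}$ through the sign convention \erf{eq:signconvention}. The only difference is that you spell out the Pressley--Segal classification of central extensions of $LG$ by their first Chern class, and the fact that $\tau(\generator)$ generates $\h^2(LG,\Z)$, both of which the paper leaves implicit in the phrase \quot{according to our sign convention}.
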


In the remainder of this section we show that the abstractly defined fusion extension $\tr_{\mathcal{G}}$ and the explicitly constructed fusion extension $\wzwmodel$ from Section \ref{sec:fusionspinstructures} are isomorphic, under the assumption that $(\mathcal{G},\rho,\mathcal{M},\alpha)$ is a multiplicative bundle gerbe  with connection over a compact, simple, connected, simply-connected Lie group $G$, and $\wzwmodel$ is constructed using the  forms $H\df\mathrm{curv}(\mathcal{G})$ and $\rho$. Note that the two identities \erf{eq:id1} and \erf{eq:id2} from the multiplicative structure are precisely those required in the construction of $\inf L$.

The isomorphism between $\inf L$ and $\tr_{\mathcal{G}}$ is defined by
\begin{equation*}
\varphi \maps  \inf L \to \tr_{\mathcal{G}}\maps  (\phi,z) \mapsto \partial\mathcal{T} \cdot z \cdot \exp \left ( -2\pi\im \int_{D^2} \omega \right )\text{.}
\end{equation*}
Here, $\mathcal{T}\maps  \phi^{*}\mathcal{G} \to \mathcal{I}_{\omega}$ is an arbitrarily chosen trivialization of $\phi^{*}\mathcal{G}$ over $D^2$ and $\partial \mathcal{T}$ denotes its restriction to the boundary; the latter is a trivialization of $\partial\phi^{*}\mathcal{G}$ over $S^1$, i.e. an element in $\tr_{\mathcal{G}}$ over the loop $\partial\phi$, see \erf{eq:lgfibre}. The map $\varphi$ is evidently fibre-preserving and $\ueins$-equivariant; hence, a bundle isomorphism. 

Let us show that $\varphi$ is well-defined. 
If $\mathcal{T}'\maps  \phi^{*}\mathcal{G} \to \mathcal{I}_{\omega'}$ is a different choice of a trivialization, then there exists a principal $\ueins$-bundle $P$ with connection over $D^2$ with $\mathrm{curv}(P) = \omega' - \omega$ and $\partial\mathcal{T}' = \partial\mathcal{T} \cdot \mathrm{Hol}_{P}(S^1)$ as elements in $\tr_{\mathcal{G}}|_{\partial\phi}$, see \cite[Section 3.1]{waldorf5} for some more details. Since
\begin{equation*}
\mathrm{Hol}_P(\partial D^2) = \exp \left ( 2\pi\im \int_{D^2} \mathrm{curv}(P) \right )\text{,}
\end{equation*}
the two contributions cancel. 
Suppose, on the other hand,  that $(\phi',z')$ is an equivalent representative of the element in $\wzwmodel$, i.e., $\partial\phi = \partial\phi'$
and $z = z' \cdot \mathrm{e}^{2\pi\im S_{\mathrm{WZ}}(\Phi)}$. We may choose a trivialization $\widetilde{\mathcal{T}}\maps  \Phi^{*}\mathcal{G} \to \mathcal{I}_{\tilde \omega}$ and use its restrictions $\mathcal{T}$ and $\mathcal{T}'$  to the two hemispheres in the definition of $\varphi$. Now, well-definedness follows from the fact that
\begin{equation*}
\exp \left (2\pi\im S_{\mathrm{WZ}}(\Phi) \right ) = \exp \left (2\pi\im \int_{D^3} \tilde \Phi^{*}H \right ) = \exp\left ( 2\pi\im \int_{S^2} \tilde \omega \right )\text{,} 
\end{equation*}
the latter equality being a consequence of the integrality of $H$ and the relation $\Phi^{*}H \eq \mathrm{curv}(\Phi^{*}\mathcal{G}) \eq \mathrm{d}\tilde \omega$ (as for any trivialization).

\begin{proposition}
\label{prop:equivext}
The bundle isomorphism $\varphi$ is a group homomorphism and  fusion-preserving, and thus defines an equivalence between the fusion extensions $\tr_{\mathcal{G}}$ and $\wzwmodel$. 
\end{proposition}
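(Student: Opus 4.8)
The plan is to check the two asserted properties of $\varphi$ one at a time. In both cases the strategy is the same: evaluate each side on suitably chosen surface trivializations, let the holonomy factor $\exp(-2\pi\im\int\omega)$ built into $\varphi$ absorb the curving integrals, and reduce the claim to a clean geometric identity expressing that \emph{restriction to the boundary} intertwines the relevant structure map. The curving/integrality bookkeeping already used to prove that $\varphi$ is well-defined will be the main computational tool throughout.

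\emph{Homomorphism property.} Given representatives $(\phi_1,z_1)$ and $(\phi_2,z_2)$, I would fix trivializations $\mathcal{T}_i\maps \phi_i^{*}\mathcal{G}\to\mathcal{I}_{\omega_i}$ over $D^2$ and assemble from them a distinguished trivialization $\mathcal{T}_{12}$ of $(\phi_1\phi_2)^{*}\mathcal{G}$ by composing $\mathcal{T}_1\otimes\mathcal{T}_2$ with the inverse of the pullback $(\phi_1,\phi_2)^{*}\mathcal{M}$ of the multiplicative isomorphism \erf{eq:multstr}. Because $\mathcal{M}$ shifts the curving by $\rho$, the trivialization $\mathcal{T}_{12}$ has curving $\omega_1+\omega_2-(\phi_1,\phi_2)^{*}\rho$. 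Evaluating $\varphi$ on the Mickelsson product by means of $\mathcal{T}_{12}$, the explicit $\rho$-integral of the Mickelsson product and the $\rho$-contribution hidden in $\int_{D^2}\omega_{12}$ cancel, so that $\varphi$ of the product equals $\partial\mathcal{T}_{12}\cdot z_1 z_2\cdot\exp(-2\pi\im\int_{D^2}\omega_1)\exp(-2\pi\im\int_{D^2}\omega_2)$. By $\ueins$-equivariance of $\tilde m$ the same scalar factors out of $\tilde m(\varphi(\phi_1,z_1),\varphi(\phi_2,z_2))$, so the homomorphism property reduces to the single identity $\tilde m(\partial\mathcal{T}_1,\partial\mathcal{T}_2)=\partial\mathcal{T}_{12}$. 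This is exactly the statement that the transgressed multiplication \erf{eq:transmult}, i.e. $\tr_{\mathcal{M}}$ followed by the canonical trivialization $t_\rho$ of \erf{eq:cantriv}, sends the two boundary trivializations to the boundary of $\mathcal{T}_{12}$; it follows by unwinding the formula \erf{eq:transmorph} for $\tr_{\mathcal{M}}$ on $\partial\mathcal{T}_1\otimes\partial\mathcal{T}_2=\partial(\mathcal{T}_1\otimes\mathcal{T}_2)$ together with the fact that $t_\rho$ is normalized to absorb precisely the $\mathcal{I}_\rho$-part.

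\emph{Fusion-preservation.} Starting from $(\phi_{12},z_{12})$, $(\phi_{23},z_{23})$ and a chosen $\phi_{13}$ as in \erf{eq:explicitfusionproduct}, I fix trivializations $\mathcal{T}_{ij}\maps \phi_{ij}^{*}\mathcal{G}\to\mathcal{I}_{\omega_{ij}}$ over $D^2$ and glue them, along the trisection of $S^2$ used to define $\Psi$, into a trivialization $\widetilde{\mathcal{T}}\maps \Psi^{*}\mathcal{G}\to\mathcal{I}_{\tilde\omega}$ over $S^2$. The same integrality argument that proves $\varphi$ well-defined gives $S_{\mathrm{WZ}}(\Psi)=\int_{S^2}\tilde\omega$, which decomposes over the three sectors, with the orientation-reversing identifications of $\phi_{12}$ and $\phi_{23}$, as $\int_{D^2}\omega_{13}-\int_{D^2}\omega_{12}-\int_{D^2}\omega_{23}$. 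Pulling the $\ueins$-scalars out of $\lambda_{\mathcal{G}}$ and cancelling the Wess--Zumino factor against these curving integrals, the fusion-preservation equation reduces to $\lambda_{\mathcal{G}}(\partial\mathcal{T}_{12}\otimes\partial\mathcal{T}_{23})=\partial\mathcal{T}_{13}$. I would then verify this via the characterization \erf{eq:fusiondef}: the restrictions of $\widetilde{\mathcal{T}}$ to the three longitudes of $S^2$ — which map to the paths $\gamma_1,\gamma_2,\gamma_3$ — furnish the required $2$-isomorphisms $\phi_1,\phi_2,\phi_3$, and their restrictions to the two poles $x,y$ satisfy the cocycle condition $\phi_1=\phi_3\circ\phi_2$ by construction.

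\emph{Main obstacle.} The cancellations of the $\rho$- and $\omega$-integrals are routine; the genuine work lies in the two reduced identities $\tilde m(\partial\mathcal{T}_1,\partial\mathcal{T}_2)=\partial\mathcal{T}_{12}$ and $\lambda_{\mathcal{G}}(\partial\mathcal{T}_{12}\otimes\partial\mathcal{T}_{23})=\partial\mathcal{T}_{13}$. Each asserts that an abstractly defined transgression operation — the multiplication built from $\tr_{\mathcal{M}}$ and $t_\rho$, respectively the fusion product $\lambda_{\mathcal{G}}$ defined through the existence of compatible $2$-isomorphisms over paths — is computed concretely by restricting to the boundary a trivialization assembled over a surface. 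Proving them rests on the detailed descriptions of $\tr_{\mathcal{M}}$, $t_\rho$ and $\lambda_{\mathcal{G}}$ in \cite{waldorf5,waldorf10}, and the only real care needed is in matching the orientation conventions of the trisection of $S^2$ with those built into the fusion product.
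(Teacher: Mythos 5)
Your first half (the homomorphism property) is correct and is essentially the paper's own argument: you build $\mathcal{T}_{12}$ from $\mathcal{T}_1 \otimes \mathcal{T}_2$ and $(\phi_1,\phi_2)^{*}\mathcal{M}^{-1}$, observe that the curving picks up the term $-(\phi_1,\phi_2)^{*}\rho$, and reduce everything to the boundary identity $\tilde m(\partial\mathcal{T}_1,\partial\mathcal{T}_2)=\partial\mathcal{T}_{12}$, which is exactly how the paper concludes by comparing \erf{eq:transmorph} and \erf{eq:transmult} together with the canonical trivialization $t_\rho$ of \erf{eq:cantriv}.

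The fusion half, however, has a genuine gap at its very first step: you fix trivializations $\mathcal{T}_{ij}\maps \phi_{ij}^{*}\mathcal{G}\to\mathcal{I}_{\omega_{ij}}$ \emph{independently} and then propose to glue them along the trisection into a trivialization $\widetilde{\mathcal{T}}$ of $\Psi^{*}\mathcal{G}$ over $S^2$. Such a gluing does not exist in general. Gluing requires connection-preserving 2-isomorphisms over the three longitudes whose restrictions to the two poles satisfy a cocycle condition, and by the characterization \erf{eq:fusiondef} the existence of exactly this data is \emph{equivalent} to the identity $\lambda_{\mathcal{G}}(\partial\mathcal{T}_{12}\otimes\partial\mathcal{T}_{23})=\partial\mathcal{T}_{13}$ that you are trying to prove; your argument is therefore circular. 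Moreover, for independent choices the identity genuinely fails: replacing $\mathcal{T}_{13}$ by $\mathcal{T}_{13}\otimes L$ for a line bundle with connection $L$ over $D^2$ multiplies $\partial\mathcal{T}_{13}$ by the holonomy $\mathrm{Hol}_L(\partial D^2)$, which can be an arbitrary element of $\ueins$, while the left-hand side is unchanged. The repair is to reverse the order of choices, which is what the paper does: first choose a single trivialization $\mathcal{T}\maps \Psi^{*}\mathcal{G}\to\mathcal{I}_{\omega}$ over all of $S^2$ (possible since $\mathrm{dd}(\Psi^{*}\mathcal{G})=0$), and then \emph{define} the $\mathcal{T}_{ij}$ as its restrictions to the three sectors. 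For these particular trivializations the 2-isomorphisms in \erf{eq:fusiondef} are identities, so the fusion identity holds on the nose; and since $\varphi$ was shown, before the Proposition, to be independent of the choice of trivialization in each slot, computing $\varphi$ with these restricted $\mathcal{T}_{ij}$ is legitimate. With this reversal, your curving and orientation bookkeeping (the relation $\mathrm{e}^{2\pi\im S_{\mathrm{WZ}}(\Psi)}=\mathrm{e}^{2\pi\im\int_{S^2}\omega}$ and the sector decomposition of $\int_{S^2}\omega$) goes through verbatim and completes the proof.
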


\begin{proof}
In order to see that $\varphi$ preserves the group structure, consider two elements $(\phi_1,z_1)$ and $(\phi_2,z_2)$ in $\wzwmodel$, and recall that 
\begin{equation*}
(\phi_1,z_1) \cdot (\phi_2,z_2) = (\phi_1\phi_2, z_1z_2 \cdot \exp \left (- 2\pi\im \int_{D^2} (\phi_1,\phi_2)^{*}\rho \right )  )\text{.}
\end{equation*}
Let $\mathcal{T}_1\maps  \phi_1^{*}\mathcal{G} \to \mathcal{I}_{\omega_1}$ and $\mathcal{T}_2\maps  \phi_2^{*}\mathcal{G} \to \mathcal{I}_{\omega_2}$ be trivializations. We define a new trivialization $\mathcal{T}_{12}\maps (\phi_1\phi_2)^{*}\mathcal{G} \to \mathcal{I}_{\omega_{12}}$ as the composite
\begin{equation*}
\alxydim{@C=1.5cm@R=1.6cm}{(\phi_1\phi_2)^{*}\mathcal{G} \ar@{=}[r] & (\phi_1,\phi_2)^{*}m^{*}\mathcal{G} \otimes \mathcal{I}_{(\phi_1,\phi_2)^{*}\rho} \otimes \mathcal{I}_{-(\phi_1,\phi_2)^{*}\rho} \ar[d]^-{(\phi_1,\phi_2)^{*}\mathcal{M}^{-1} \otimes \id} \\& \phi_1^{*}\mathcal{G} \otimes \phi_2^{*}\mathcal{G}  \mathcal{I}_{-(\phi_1,\phi_2)^{*}\rho} \ar[r]_{\mathcal{T}_1 \otimes \mathcal{T}_2 \otimes \id} &  \mathcal{I}_{\omega_1 + \omega_2 - (\phi_1,\phi_2)^{*}\rho} }
\end{equation*}
Restricting to the boundary $S^1 = \partial D^2$, we have $\partial\mathcal{T}_{12} = \partial\mathcal{T}_1 \cdot \partial\mathcal{T}_2$ in the group structure of the Fréchet Lie group $\tr_{\mathcal{G}}$, compare \erf{eq:transmorph} and \erf{eq:transmult}. Hence,
\begin{eqnarray*}
\varphi(\phi_1,z_1) \cdot \varphi(\phi_2,z_2) &=& \partial\mathcal{T}_{1} \cdot \partial\mathcal{T}_{2} \cdot z_1z_2 \cdot \exp \left ( - 2\pi\im \int_{D^2} \omega_1 + \omega_2 \right ) 
\\
&=&
\partial\mathcal{T}_{12} \cdot z_1z_2 \cdot \exp \left ( - 2\pi\im \int_{D^2} \omega_{12} + (\phi_1,\phi_2)^{*}\rho \right ) 
\\
&=&
\varphi(\phi_1\phi_2,z_1z_2) \cdot \exp \left ( - 2\pi\im \int_{D^2} (\phi_1,\phi_2)^{*}\rho \right ) 
\\
&=&
\varphi((\phi_1,z_1) \cdot (\phi_2,z_2))\text{.}
\end{eqnarray*}
This shows that $\varphi$ is a group homomorphism. 

In order to see that $\varphi$ is fusion-preserving we assume that $(\gamma_1,\gamma_2,\gamma_3)\in PG^{[3]}$, that $\phi_{ij}\maps  D^2 \to G$ are smooth maps with $\partial\phi_{ij}=\gamma_i \lop \gamma_j$, and that  $\Psi\maps S^2 \to G$ is constructed from the latter ones as described in Section \ref{sec:fusionspinstructures}, so that
\begin{equation*}
\lambda_{\wzwmodel}((\phi_{12},z_{12}) \otimes (\phi_{23},z_{23})) = (\phi_{13},z_{12}  z_{23}\cdot \mathrm{e}^{2\pi\im S_{\mathrm{WZ}}(\Psi)})\text{.}
\end{equation*}
Let us choose a trivialization $\mathcal{T}\maps  \Psi^{*}\mathcal{G} \to \mathcal{I}_{\omega}$, and restrict to trivializations $\mathcal{T}_{ij}\maps \phi_{ij}^{*}\mathcal{G} \to \mathcal{I}_{\omega_{ij}}$. In this situation, Definition \erf{eq:fusiondef} of the fusion product $\lambda_{\mathcal{G}}$ on $\tr_{\mathcal{G}}$ shows that 
\begin{equation*}
\lambda_{\mathcal{G}}(\partial\mathcal{T}_{12} \otimes \partial\mathcal{T}_{23}) = \partial\mathcal{T}_{13}\text{.}
\end{equation*}
Thus,
\begin{eqnarray*}
\lambda_{\mathcal{G}}(\varphi(\phi_{12},z_{12}) \otimes \varphi(\phi_{23},z_{23})) &=& \partial\mathcal{T}_{13} \cdot z_{12}z_{23} \cdot \exp \left ( - 2\pi\im \int_{D^2} (\omega_{12} + \omega_{23}) \right ) 
\\
&=& \partial\mathcal{T}_{13} \cdot z_{12}z_{23}\cdot \exp \left ( - 2\pi\im \int_{D^2} (\omega_{13} - \omega) \right ) 
\\
&=& \partial\mathcal{T}_{13} \cdot z_{12}z_{23}\cdot \mathrm{e}^{2\pi\im S_{\mathrm{WZ}}(\Psi)}\cdot \exp \left ( - 2\pi\im \int_{D^2} \omega_{13}  \right ) 
\\
&=& \varphi(\phi_{13},z_{12} z_{23}\cdot \mathrm{e}^{2\pi\im S_{\mathrm{WZ}}(\Psi)})
\text{.}
\end{eqnarray*}
This shows that $\varphi$ is fusion-preserving. 
\end{proof}

\subsection{Fusion Lifts}

\label{sec:liftingfusionext}

In this section we  develop a general theory for lifting problems along fusion extensions, which is a general framework for the fusion spin structures of Definition \ref{def:fusionspinstructure}.

We consider a connected Lie group $G$, an abelian Lie group $A$,  a fusion extension
\begin{equation*}
1 \to A \to \inf G \to LG \to 1\text{,}
\end{equation*}
and a principal $G$-bundle $E$ over a smooth manifold $M$. The fusion product on $\inf G$ will be denoted by $\lambda_{\inf G}$. We recall that $LE$ is a Fréchet principal $LG$-bundle over $LM$. 

\begin{definition}
\label{def:fusionlift}
A \emph{fusion lift} of the structure group of $LE$ from $LG$ to $\inf G$ is a $\inf G$-lift $(\inf S,\sigma)$  together with a fusion product $\lambda_{\inf S}$ on the associated principal $A$-bundle $\sigma\maps  \inf S \to LE$, such that the $\inf G$-action on $\inf S$ is fusion-preserving:
\begin{equation*}
\lambda_{\inf E}(q_{12}\cdot \beta_{12} \otimes q_{23} \cdot \beta_{23}) = \lambda_{\inf E}(q_{12} \otimes q_{23}) \cdot \lambda_{\inf G}(\beta_{12} \otimes \beta_{23})
\end{equation*}
for all $q_{ij}\in \inf S_{\alpha_i\lop\alpha_j}$, $\beta_{ij} \in \inf G_{\gamma_i \lop \gamma_j}$, and all $(\alpha_1,\alpha_2,\alpha_3) \in PE^{[3]}$ and $(\gamma_1,\gamma_2,\gamma_3) \in PG^{[3]}$. 
\end{definition}

Here, by \emph{associated $A$-bundle} we mean the bundle defined analogously to the one of Lemma \ref{lem:ueinsbundle}, which has the total space $\inf S$ and the $A$ action induced by the group homomorphism
\begin{equation*}
\alxydim{}{A \ar[r]^-{i} & A \ar@{^(->}[r] & \inf G\text{.} }
\end{equation*}
A morphism between fusion lifts $(\inf S_1,\sigma_1,\lambda_{\inf S_1})$ and $(\inf S_2,\sigma_2,\lambda_{\inf S_2})$ is called \emph{fusion-preserving}, if the induced morphism $\inf S_1 \to \inf S_2$ of principal $A$-bundles over $LE$ preserves the fusion products $\lambda_1$ and $\lambda_2$. Fusion lifts form a category denoted by $\fusstruc {\inf G}{LE}$.

\begin{example}
\label{ex:fusionspinstructures}
In this notation,  the fusion spin structures introduced in Definition \ref{def:fusionspinstructure} form precisely the category $\fusstruc {\lspinhat n}{LFM}$. 
\end{example}

In the following we describe fusion lifts in terms of lifting bundle gerbes, and equip, for this purpose, the ordinary lifting bundle gerbe $\mathcal{G}_{LE}$ for lifting the looped bundle $LE$ from $LG$ to $\inf G$ with an additional  structure.
\begin{definition}
\label{def:internalfusionproduct}
Let $\mathcal{G}$ be an $A$-bundle gerbe over $LM$ whose surjective submersion is the looping of a surjective submersion $\pi\maps Y \to M$. Let  $P$ denote its principal $A$-bundle over $LY^{[2]}$ and $\mu$ denote its bundle gerbe product.  An \emph{internal fusion product} on $\mathcal{G}$ is a fusion product on  $P$ such that  $\mu$ is fusion-preserving. 
\end{definition}

The condition that $\mu$ is fusion-preserving makes sense since $\mu$ is a morphism
\begin{equation*}
\mu\maps  \pr_{12}^{*}P \otimes \pr_{23}^{*}P \to \pr_{13}^{*}P
\end{equation*}
between fusion bundles over $LY^{[3]}$. 
In order to spell it out explicitly, we must consider three paths
 $\gamma_1,\gamma_2,\gamma_3 \in PY^{[3]}$ with a common initial point in $Y^{[3]}$ and a common end point in $Y^{[3]}$. The composition of a path $\gamma_k$ with one of the projections $\pr_{i}\maps  Y^{[3]} \to PY$ gives a path $\gamma_k^{i} \in PY$, and these give in turn loops which we denote by $\tau_{ij} \df \gamma^{1}_i \lop \gamma^{1}_j \in LY$, $\tau'_{ij}= \gamma^{2}_i \lop \gamma^{2}_j \in LY$, and $\tau''_{ij}= \gamma^{3}_i \lop \gamma^{3}_j \in LY$. Now, the condition is that
\begin{equation*}
\mu(\lambda(q_{12} \otimes q_{23}) \otimes \lambda(q_{12}' \otimes q_{23}')) = \lambda(\mu(q_{12} \otimes q_{12}') \otimes \mu(q_{23} \otimes q_{23}'))
\end{equation*}
for all $q_{ij} \in P_{(\tau_{ij},\tau_{ij}')}$ and $q_{ij}' \in P_{(\tau_{ij}',\tau_{ij}'')}$.

There are two important examples of bundle gerbes with internal fusion products. One is when the $\mathcal{G}$ is the transgression of a bundle 2-gerbe over $M$ -- this will be explained in Section \ref{sec:bundle2gerbes}. The other example is  when $\mathcal{G}$  is the lifting bundle gerbe $\mathcal{G}_{LE}$  associated to the problem of lifting the structure group of $LE$ from $LG$ to the fusion extension $\inf G$.

Indeed, in this case the surjective submersion of $\mathcal{G}_{LE}$ is the looping of the bundle projection $E \to M$, and
the principal $A$-bundle $P=L\diffr^{*}\inf G$ is the pullback of a fusion bundle along a looped map, and thus a fusion bundle, see \erf{eq:fusionpullback}. We will denote this internal fusion product on $\mathcal{G}_{LE}$ by $\lambda_{LE}$. 

In order to check that the bundle gerbe product  \erf{bgprod} is fusion-preserving, we consider paths $\gamma_1,\gamma_2,\gamma_3 \in PE^{[3]}$ as above
, and write $q_{ij} = (\tau_{ij},\tau_{ij}',\beta_{ij})$ and $q_{ij}' = (\tau'_{ij},\tau_{ij}'',\beta'_{ij})$ for elements $\beta_{ij} \in \inf G_{\gamma_{ij}}$ and $\beta_{ij}'\in \inf G_{\gamma_{ij}'}$, where $\gamma_{ij} \df L\diffr(\tau_{ij},\tau_{ij}') \in LG$ and $\gamma_{ij}' \df L\diffr(\tau_{ij}',\tau_{ij}'') \in LG$. Note that now
\begin{equation}
\label{eq:exprpull}
\lambda_{LE}(q_{12} \otimes q_{23}) = \lambda_{LE}((\tau_{12},\tau_{12}',\beta_{12}) \otimes (\tau_{23},\tau_{23}',\beta_{23})) = (\tau_{13},\tau_{13}',\lambda_{\inf G}(\beta_{12} \otimes \beta_{23}))\text{,}
\end{equation}
and similarly for the primed elements.
Then we calculate:
\begin{eqnarray*}
&&\hspace{-3cm}\mu(\lambda_{LE}(q_{12} \otimes q_{23}) \otimes \lambda_{LE}(q_{12}' \otimes q_{23}'))\\ &\stackerf{eq:exprpull}{=}&\mu((\tau_{13},\tau_{13}',\lambda_{\inf G}(\beta_{12} \otimes \beta_{23})) \otimes (\tau'_{13},\tau_{13}'',\lambda_{\inf G}(\beta_{12}' \otimes \beta_{23}')))
\\&\stackerf{bgprod}{=}&
(\tau_{13},\tau_{13}'',\lambda_{\inf G}(\beta_{12}' \otimes \beta_{23}') \cdot \lambda_{\inf G}(\beta_{12} \otimes \beta_{23}))
\\&\stackerf{eq:multfus}{=}&
(\tau_{13},\tau_{13}'',\lambda_{\inf G}(\beta_{12}'\beta_{12} \otimes \beta_{23}'\beta_{23}) )
\\&\stackerf{eq:exprpull}{=}&
\lambda_{LE}((\tau_{12},\tau_{12}'',\beta_{12}'\beta_{12}) \otimes (\tau_{23},\tau_{23}'',\beta_{23}'\beta_{23}))
\\&\stackerf{bgprod}{=}&
\lambda_{LE}(\mu(q_{12} \otimes q_{12}') \otimes \mu(q_{23} \otimes q_{23}'))\text{.}
\end{eqnarray*}
This shows that $\mu$ is fusion-preserving.
Summarizing, we have defined an internal fusion product on the lifting bundle gerbe $\mathcal{G}_{LE}$.

\begin{example}
\label{ex:spinliftingfusion}
The spin lifting gerbe $\mathcal{S}_{LM}$ over the loop space of a spin manifold (see Example \ref{ex:spinliftinggerbe}) is equipped with an internal fusion product.
\end{example}

\begin{remark}
In contrast to an internal fusion product, which is only defined for a particular class of bundle gerbes over $LM$,
an \emph{external fusion product} on a general bundle gerbe $\mathcal{G}$ over  $LM$ is an isomorphism
\begin{equation*}
\mathcal{L}\maps  e_{12}^{*}\mathcal{G} \otimes e_{23}^{*}\mathcal{G} \to e_{13}^{*}\mathcal{G}
\end{equation*}
between bundle gerbes over $PM^{[3]}$ together with a transformation
\begin{equation*}
\alxydim{@C=2cm@R=1.6cm}{e_{12}^{*}\mathcal{G} \otimes e_{23}^{*}\mathcal{G} \otimes e_{34}^{*}\mathcal{G} \ar[r]^-{\id \otimes \pr_{234}^{*}\mathcal{L}} \ar[d]_{\pr_{123}^{*}\mathcal{L} \otimes \id} & e_{12}^{*}\mathcal{G} \otimes e_{24}^{*}\mathcal{G} \ar@{=>}[dl]|*+{\lambda} \ar[d]^{\pr_{124}^{*}\mathcal{L}} \\ e_{13}^{*}\mathcal{G} \otimes e_{34}^{*}\mathcal{G} \ar[r]_-{\pr_{134}^{*}\mathcal{L}} & e_{14}^{*}\mathcal{G}}
\end{equation*}
over $PM^{[4]}$ that satisfies the pentagon identity over $\p M^{[5]}$. Here, $e_{ij}\maps  PM^{[2]} \to LM$ is the composition of the projection $\pr_{ij}\maps  PM^{[3]} \to PM^{[2]}$ with the map $\lop\maps  PM^{[2]} \to LM$. I remark that an internal fusion product determines an external one;  a general discussion of external fusion products is beyond the scope of this article.  
\end{remark}

If a bundle gerbe $\mathcal{G}$ is equipped with an internal fusion product $\lambda$, we can  consider trivializations that \quot{respect} the fusion product in the following way:

\begin{definition}
Let $\mathcal{G}$ be a bundle gerbe over $LM$ whose surjective submersion is the looping of a surjective submersion $\pi\maps Y \to M$. A \emph{fusion product} on a trivialization $\mathcal{T} = (T,\kappa)$ of $\mathcal{G}$ is a fusion product $\lambda_T$ on the principal $A$-bundle $T$ over $LY$. It is called \emph{compatible} with an internal fusion product $\lambda$ on $\mathcal{G}$ if $\kappa$ is a fusion-preserving bundle morphism. \end{definition}

A morphism $\varphi\maps  (T_1,\kappa_1,\lambda_{1})\to (T_2,\kappa_2,\lambda_2)$ between trivializations with compatible fusion products is an ordinary morphism $\varphi\maps  (T_1,\kappa_1) \to (T_2,\kappa_2)$ between the trivializations that is additionally fusion-preserving.  
The category of trivializations with compatible fusion product is denoted by $\trivfus {\mathcal{G},\lambda}$.

\begin{theorem}
\label{th:fusionlifting}
The equivalence of Theorem \ref{th:lifting} between trivializations of $\mathcal{G}_{LE}$ and $\inf G$-lifts of $LE$ extends to an equivalence in the fusion setting:
\begin{equation*}
\trivfus{\mathcal{G}_{LE},\lambda_{LE}} \cong \fusstruc{\inf G}{LE}\text{.}
\end{equation*}
\end{theorem}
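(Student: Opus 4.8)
The plan is to show that the two mutually inverse constructions of Theorem~\ref{th:lifting} already carry the fusion data back and forth, so that the only thing to check is that the compatibility condition for trivializations matches the fusion-preservation condition for lifts. First I would recall that, in passing from a trivialization $(T,\kappa)$ to a lift and back, the underlying bundle $T$ of the trivialization is identified with the associated principal $A$-bundle $\inf S$ of the lift: the $A$-action twisted by the inversion $i$ appearing in Lemma~\ref{lem:ueinsbundle} is exactly the $A$-action used in Murray's reconstruction $T\df\hat P$. Consequently a fusion product $\lambda_T$ on the $A$-bundle $T$ over $LE$ \emph{is literally the same datum} as a fusion product $\lambda_{\inf S}$ on the associated $A$-bundle $\inf S$ over $LE$, and the pulled-back fusion products on $\pr_1^{*}T$ and $\pr_2^{*}T$ over $LE^{[2]}$ are those of \erf{eq:fusionpullback}. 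Nothing new has to be constructed; the content of the theorem is entirely that the two extra axioms coincide.

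The key step is to translate \quot{$\kappa$ is fusion-preserving} into \quot{the $\inf G$-action is fusion-preserving}. Here I would use the relation $\kappa(\hat g\otimes t)=t\cdot\hat g^{-1}$ from Murray's construction, together with the fact recorded in \erf{eq:exprpull} that the internal fusion product $\lambda_{LE}$ on $Q=L\diffr^{*}\inf G$ is just $\lambda_{\inf G}$ on the $\inf G$-component. Writing $q_{ij}\in Q$ for the elements represented by $\beta_{ij}\in\inf G$ and $t_{ij}\in T=\inf S$, the fusion-preservation of $\kappa$ over $LE^{[2]}$ becomes
\begin{equation*}
\lambda_T(t_{12}\cdot\beta_{12}^{-1}\otimes t_{23}\cdot\beta_{23}^{-1})=\lambda_T(t_{12}\otimes t_{23})\cdot\lambda_{\inf G}(\beta_{12}\otimes\beta_{23})^{-1}\text{,}
\end{equation*}
whereas the condition of Definition~\ref{def:fusionlift}, after the substitution $\beta_{ij}\mapsto\beta_{ij}^{-1}$, reads
\begin{equation*}
\lambda_T(t_{12}\cdot\beta_{12}^{-1}\otimes t_{23}\cdot\beta_{23}^{-1})=\lambda_T(t_{12}\otimes t_{23})\cdot\lambda_{\inf G}(\beta_{12}^{-1}\otimes\beta_{23}^{-1})\text{.}
\end{equation*}
Thus the two conditions agree precisely when $\lambda_{\inf G}(\beta_{12}^{-1}\otimes\beta_{23}^{-1})=\lambda_{\inf G}(\beta_{12}\otimes\beta_{23})^{-1}$.

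The hard part — though it is short — is this last identity, which I expect to be the only genuine point to verify. It follows from multiplicativity \erf{eq:multfus} of the fusion product on the group $\inf G$: taking $q'_{ij}=\beta_{ij}^{-1}$ (group inverse) gives $\lambda_{\inf G}(\beta_{12}\otimes\beta_{23})\cdot\lambda_{\inf G}(\beta_{12}^{-1}\otimes\beta_{23}^{-1})=\lambda_{\inf G}(1\otimes 1)$, and a second application of \erf{eq:multfus} with all arguments the identity shows $\lambda_{\inf G}(1\otimes 1)$ is idempotent, hence the unit of $\inf G$. This yields the required relation, so $\kappa$ is fusion-preserving if and only if the $\inf G$-action is. Finally I would observe that a morphism of trivializations and the corresponding morphism of lifts share the \emph{same} underlying $A$-bundle map over $LE$, so one is fusion-preserving exactly when the other is; hence Murray's equivalence restricts to a bijection on the fusion-preserving morphisms and delivers the claimed equivalence $\trivfus{\mathcal{G}_{LE},\lambda_{LE}}\cong\fusstruc{\inf G}{LE}$.
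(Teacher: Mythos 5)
Your proof is correct and follows essentially the same route as the paper's: both reduce the theorem to the observation that the fusion datum is literally the same on the two sides, and that, via the action formula $q\cdot\beta=\kappa(\beta^{-1}\otimes q)$, the two compatibility conditions match up through the identity $\lambda_{\inf G}(\beta_{12}^{-1}\otimes\beta_{23}^{-1})=\lambda_{\inf G}(\beta_{12}\otimes\beta_{23})^{-1}$. The only difference is that you actually derive this identity from multiplicativity \erf{eq:multfus} (pairing with inverses plus idempotency of $\lambda_{\inf G}(1\otimes 1)$), whereas the paper simply asserts it as a consequence of \erf{eq:multfus}.
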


\begin{proof}
We recall that  the equivalence of Theorem \ref{th:lifting} sends a trivialization $(T,\kappa)$ to the principal $\inf G$-bundle $\inf E \df T$ with projection $T \to LE \to LM$ and $\inf G$-action given by 
\begin{equation*}
q \cdot \beta \df \kappa(\beta^{-1} \otimes q)\text{.}
\end{equation*}
The \emph{additional structure} we want to take into account is the same on both sides: a fusion product $\lambda_{T}$ on the principal $A$-bundle $T \to LE$. It remains to check that the \emph{conditions} are equivalent: on the left hand side the condition that $\kappa$ is fusion-preserving,  and on the right hand side the condition that the $\inf G$-action on $T$ is fusion-preserving in the sense of Definition \ref{def:fusionlift}. 
Suppose first that $\kappa$ is fusion-preserving. Then,
\begin{eqnarray*}
\lambda_{\inf E}(q_{12}\cdot \beta_{12} \otimes q_{23} \cdot \beta_{23}) &=& 
\lambda_{\inf E}(\kappa(\beta_{12}^{-1} \otimes q_{12})\otimes \kappa(\beta_{23}^{-1} \otimes q_{23})) 
\\
&=&
\kappa(\lambda_{\inf G}(\beta_{12}^{-1} \otimes \beta_{23}^{-1}) \otimes \lambda_{\inf E}(q_{12} \otimes q_{23}))
\\
&=&
\kappa(\lambda_{\inf G}(\beta_{12} \otimes \beta_{23})^{-1} \otimes \lambda_{\inf E}(q_{12} \otimes q_{23}))
\\
&=&
\lambda_{\inf E}(q_{12} \otimes q_{23}) \cdot \lambda_{\inf G}(\beta_{12} \otimes \beta_{23})\text{;}
\end{eqnarray*}
this shows that the $\inf G$-action is fusion-preserving. 
In the middle we  have used that the multiplicativity \erf{eq:multfus} of $\lambda_{\inf G}$ implies $\lambda_{\inf G}(\beta_{12} \otimes \beta_{23})^{-1}= \lambda_{\inf G}(\beta_{12}^{-1} \otimes \beta_{23}^{-1})$.
The converse can be proved similarly. 
The condition for morphisms is evidently the same on both sides.
\end{proof}

Applied to fusion spin structures on loop spaces, Theorem \ref{th:fusionlifting} implies via Examples \ref{ex:fusionspinstructures} and  \ref{ex:spinliftingfusion}:

\begin{corollary}
\label{co:fusionspinlifting}
There is an equivalence
\begin{equation*}
\bigset{8em}{Fusion spin structures on $LM$}
\cong
\bigset{11em}{Trivializations of the spin lifting gerbe $\mathcal{S}_{LM}$ with compatible fusion product}\text{.}
\end{equation*}
\end{corollary}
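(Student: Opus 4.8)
The plan is to obtain the corollary as a direct specialization of Theorem \ref{th:fusionlifting}. First I would set $G \df \spin n$, $A \df \ueins$, let $\inf G \df \lspinhat n$ be the universal central extension equipped with its canonical fusion product (Theorem \ref{th:fusionextensionobserve}), and take $E \df FM$ to be the spin-oriented frame bundle. With these choices the looped bundle $LE$ becomes $LFM$, and the ordinary lifting bundle gerbe $\mathcal{G}_{LE}$ becomes the spin lifting gerbe $\mathcal{S}_{LM}$ of Example \ref{ex:spinliftinggerbe}. The general construction preceding Definition \ref{def:internalfusionproduct} then endows $\mathcal{S}_{LM}$ with an internal fusion product $\lambda_{LFM}$, which is precisely the one asserted to exist in Example \ref{ex:spinliftingfusion}.

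Next I would identify the two sides of the equivalence furnished by Theorem \ref{th:fusionlifting} under this specialization. On the right-hand side, $\fusstruc{\inf G}{LE} = \fusstruc{\lspinhat n}{LFM}$, and Example \ref{ex:fusionspinstructures} records that this category is exactly the category of fusion spin structures on $LM$ in the sense of Definition \ref{def:fusionspinstructure}. On the left-hand side, $\trivfus{\mathcal{G}_{LE},\lambda_{LE}} = \trivfus{\mathcal{S}_{LM},\lambda_{LFM}}$ is by definition the category of trivializations of the spin lifting gerbe carrying a fusion product compatible with its internal fusion product. Substituting both identifications into the conclusion of Theorem \ref{th:fusionlifting} yields the claimed equivalence of categories.

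Since the corollary is a pure specialization, there is no genuinely hard analytic step; the only point I would make explicit is that the abstract definitions unfold to the concrete ones. Concretely, the fusion-lift condition of Definition \ref{def:fusionlift} reduces verbatim to the fusion-preserving condition in Definition \ref{def:fusionspinstructure} once $\lambda_{\inf G}$ is taken to be the fusion product $\lambda$ on $\lspinhat n$, and the internal fusion product $\lambda_{LFM}$ produced by the general pullback construction agrees with the one referenced in Example \ref{ex:spinliftingfusion}. Both verifications are immediate from the constructions, so the entire content of the corollary lies in Theorem \ref{th:fusionlifting} together with the two cited examples.
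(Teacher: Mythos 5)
Your proposal is correct and follows exactly the paper's own route: the paper derives the corollary directly from Theorem \ref{th:fusionlifting} via Examples \ref{ex:fusionspinstructures} and \ref{ex:spinliftingfusion}, i.e.\ the same specialization $G=\spin n$, $A=\ueins$, $\inf G=\lspinhat n$, $E=FM$ that you spell out. Your added remark that the abstract fusion-lift condition of Definition \ref{def:fusionlift} unfolds verbatim to Definition \ref{def:fusionspinstructure} is precisely the content of Example \ref{ex:fusionspinstructures}, so nothing is missing.
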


\setsecnumdepth 2

\section{Transgression of String Structures}

\label{sec:transgressionstringstructures}

In this section we prepare another important tool for the proof of our main result: we discuss string structures in the setting of bundle 2-gerbes. 

\subsection{Bundle 2-Gerbes and String Structures}

\label{sec:trivializationsof2gerbes}
\label{sec:bundle2gerbes}

We start with recalling some basic definitions.

\begin{definition}[{{\cite[Definition 5.3]{stevenson2}}}]
\label{def:bundle2gerbe}
A \emph{bundle 2-gerbe} over $M$ is a surjective submersion $\pi \maps Y \to M$ together with a bundle gerbe $\mathcal{P}$  over $Y^{[2]}$, an isomorphism 
\begin{equation*}
\mathcal{M}\maps  \pr_{12}^{*}\mathcal{P} \otimes \pr_{23}^{*}\mathcal{P} \to \pr_{13}^{*}\mathcal{P}
\end{equation*}
of bundle gerbes over $Y^{[3]}$, and a transformation 
\begin{equation*}
\alxydim{@C=2cm@R=1.6cm}{\pr_{12}^{*}\mathcal{P} \otimes \pr_{23}^{*}\mathcal{P} \otimes \pr_{34}^{*}\mathcal{P} \ar[r]^-{\pr_{123}^{*}\mathcal{M} \otimes \id} \ar[d]_{\id \otimes \pr i_{234}^{*}\mathcal{M}} & \pr_{13}^{*}\mathcal{P} \otimes \pr_{34}^{*}\mathcal{P} \ar@{=>}[dl]|*+{\mu} \ar[d]^{\pr_{134}^{*}\mathcal{M}} \\ \pr_{12}^{*}\mathcal{P} \otimes \pr_{24}^{*}\mathcal{P} \ar[r]_-{\pr_{124}^{*}\mathcal{M}} & \pr_{14}^{*}\mathcal{P}}
\end{equation*}
over $Y^{[4]}$ that satisfies the  pentagon axiom.
\end{definition}

The isomorphism $\mathcal{M}$ is called \emph{bundle 2-gerbe product} and the transformation $\mu$ is called  \emph{associator}. The pentagon axiom implies the cocycle condition for a certain degree three cocycle on $M$ with values in $\ueins$, which defines -- via the exponential sequence -- a class 
\begin{equation*}
\mathrm{cc}(\mathbb{G}) \in \h^4(M,\Z)\text{;}
\end{equation*}
see \cite[Proposition 7.2]{stevenson2} for the details.

\begin{definition}[{{\cite[Definition 11.1]{stevenson2}}}]
\label{deftriv}
Let $\mathbb{G}=(Y,\pi,\mathcal{P},\mathcal{M},\mu)$ be a bundle 2-gerbe over $M$. A \emph{trivialization} of $\mathbb{G}$ is a bundle gerbe $\mathcal{S}$ over $Y$, together with an  isomorphism
\begin{equation*}
\mathcal{A}\maps  \mathcal{P} \otimes \pr_2^{*}\mathcal{S} \to \pr_1^{*}\mathcal{S}
\end{equation*}
of bundle gerbes over $Y^{[2]}$ and a connection-preserving transformation
\begin{equation*}
\alxydim{@C=2cm@R=1.6cm}{\pr_{12}^{*}\mathcal{P} \otimes \pr_{23}^{*}\mathcal{P} \otimes \pr_3^{*}\mathcal{S} \ar[r]^-{\id \otimes \pr_{23}^{*}\mathcal{A}} \ar[d]_{\mathcal{M} \otimes \id} & \pr_{12}^{*}\mathcal{P} \otimes \pr_{2}^{*}\mathcal{S} \ar@{=>}[dl]|*+{\sigma} \ar[d]^{\pr_{12}^{*}\mathcal{A}} \\ \pr_{13}^{*}\mathcal{P} \otimes \pr_{3}^{*}\mathcal{S} \ar[r]_-{\pr_{13}^{*}\mathcal{A}} & \pr_1^{*}\mathcal{S}}
\end{equation*}
over $Y^{[3]}$ that satisfies a compatibility condition with the associator $\mu$.
\end{definition}

As one expects, the characteristic class $\mathrm{cc}(\mathbb{G}) \in \h^4(M,\Z)$ of $\mathbb{G}$ vanishes if and only if $\mathbb{G}$ admits a trivialization \cite[Proposition 11.2]{stevenson2}. 
An example of a bundle 2-gerbe that will be important later is the \emph{Chern-Simons 2-gerbe} $\mathbb{CS}_P(\mathcal{G})$ \cite{carey4} -- it is associated to a principal $G$-bundle $P$ over $M$ and a multiplicative bundle gerbe $\mathcal{G}$ over $G$. Its construction goes as follows: 
\begin{itemize}

\item 
The surjective submersion is the bundle projection $P \to M$.  

\item
The bundle gerbe $\mathcal{P}$ over $P^{[2]}$ is $\mathcal{P} \df         \diffr^{*}\mathcal{G}$, where $\diffr$ is the difference map \erf{eq:differencemap}.
\item
We consider the map $\diffr'\maps  P^{[3]} \to G^2$ defined by $(p,p') \cdot \diffr'(p,p',p'') = (p',p'')$, and obtain the required bundle 2-gerbe product by pullback of the multiplicative structure \erf{eq:multstr}:
\begin{equation*}
\diffr^{\prime\ast}\mathcal{M}\maps  \pr_{12}^{*}\mathcal{P} \otimes \pr_{23}^{*}\mathcal{P} \to \pr_{13}^{*}\mathcal{P}\text{.}
\end{equation*}

\item
The transformation $\alpha$ gives via pullback along the analogous map $\diffr''\maps  P^{[4]} \to G^3$ the  associator. 
\end{itemize}

In this article, we will use the Chern-Simons 2-gerbe in order to obtain a geometrical notion of string structures on a spin manifold $M$. For this purpose, we consider $P=FM$, the spin-oriented frame bundle of $M$, and $\mathcal{G} = \gbas$, the basic bundle gerbe over $\spin n$, whose Dixmier-Douady class represents the fixed generator $\generator \in \h^3(\spin n,\Z)$. We write $\mathbb{CS}_M \df \mathbb{CS}_P(\gbas)$ for simplicity. We have \cite[Theorem 1.1.3]{waldorf8}:
\begin{equation*}
\mathrm{cc}(\mathbb{CS}_M) = \frac{1}{2}p_1(M)\text{.}
\end{equation*}
In particular, we see that $M$ is a string manifold if and only if $\mathbb{CS}_M$ admits a trivialization. This motivates the following definition:

\begin{definition}[{{\cite[Definition 1.1.5]{waldorf8}}}]
\label{def:stringstructure}
Let $M$ be a spin manifold. A \emph{string structure on $M$} is a
 trivialization of $\mathbb{CS}_M$. 
\end{definition}

So, a string structure on $M$ is a triple $(\mathcal{S},\mathcal{A},\sigma)$ consisting of a bundle gerbe $\mathcal{S}$ over $FM$, of an isomorphism $\mathcal{A}\maps  \diffr^{*}\gbas \otimes  \pr_2^{*}\mathcal{S} \to \pr_1^{*}\mathcal{S}$ between bundle gerbes over $FM^{[2]}$, and of a transformation $\sigma$ over $FM^{[3]}$.

Definition \ref{def:stringstructure} of a string structure has many nice features, some of which are described in \cite{waldorf8}. A particular feature, which we need in in the proof of the main theorem, is that Definition \ref{def:stringstructure} reproduces the topological notion of a \emph{string class}: a cohomology class $\xi\in \h^3(FM,\Z)$ that restricts on each fibre to the generator $\generator \in \h^3(\spin n,\Z)$.  Indeed, we have:

\begin{proposition}
[{{\cite[Theorem 1.1.4]{waldorf8}}}]
\label{th:stringclasses}
The map
\begin{equation*}
\bigset{3.9cm}{Isomorphism classes of string structures on $M$} \to 
\bigset{2.3cm}{String classes on $FM$}: (\mathcal{S},\mathcal{A},\sigma) \mapsto -\mathrm{dd}(\mathcal{S} )
\end{equation*}
that sends a trivialization to minus the Dixmier-Douady class of the bundle gerbe $\mathcal{S}$, is a bijection.
\end{proposition}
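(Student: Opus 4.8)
The plan is to realize both sides as torsors over $\h^3(M,\Z)$ and to recognize $(\mathcal{S},\mathcal{A},\sigma) \mapsto -\mathrm{dd}(\mathcal{S})$ as an equivariant map between them, which forces it to be a bijection; the degenerate case in which both sides are empty is disposed of along the way. First I would check that the map is well-defined, i.e. that $-\mathrm{dd}(\mathcal{S})$ restricts on each fibre of $\pi\maps FM \to M$ to the generator $\generator$. Applying $\mathrm{dd}$ to the defining isomorphism $\mathcal{A}\maps \diffr^{*}\gbas \otimes \pr_2^{*}\mathcal{S} \to \pr_1^{*}\mathcal{S}$ gives
\begin{equation*}
\diffr^{*}\generator = \pr_1^{*}\mathrm{dd}(\mathcal{S}) - \pr_2^{*}\mathrm{dd}(\mathcal{S}) \in \h^3(FM^{[2]},\Z)\text{.}
\end{equation*}
Pulling this back along $j\maps \spin n \to FM^{[2]}\maps g \mapsto (p_0,p_0g)$, for a fixed $p_0$ in the fibre over $m$, and using $\diffr \circ j = \id$, that $\pr_1 \circ j$ is constant, and that $\pr_2 \circ j$ is the fibre inclusion, one obtains $\mathrm{dd}(\mathcal{S})|_{\text{fibre}} = -\generator$, so $-\mathrm{dd}(\mathcal{S})$ is a string class.

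Next I would set up the two torsor structures. On the string-structure side, tensoring a trivialization with the pullback $\pi^{*}\mathcal{K}$ of a bundle gerbe $\mathcal{K}$ over $M$ yields a new trivialization and changes the class by $\pi^{*}\mathrm{dd}(\mathcal{K})$; this gives an action of the group $\h^3(M,\Z)$ of isomorphism classes of bundle gerbes over $M$. On the string-class side, I would run the Serre spectral sequence of $\spin n \to FM \to M$, whose $E_2$-page is $\h^p(M,\h^q(\spin n,\Z))$ with the fibre cohomology concentrated in degrees $q=0,3$ since $\spin n$ is $2$-connected, and whose transgression $d_4$ carries $\generator$ to $\tfrac{1}{2}p_1(M)$. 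This yields three things at once: a string class exists if and only if $\tfrac{1}{2}p_1(M)=0$, i.e. $M$ is string (so that both sides are empty together otherwise, and the map is vacuously bijective); the edge homomorphism $\pi^{*}\maps \h^3(M,\Z) \to \h^3(FM,\Z)$ is injective in degree three; and its image is exactly the subgroup of classes restricting trivially to the fibre. Hence the string classes form a torsor over $\pi^{*}\h^3(M,\Z) \cong \h^3(M,\Z)$, and the map of the proposition intertwines the two actions up to a sign.

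It then remains to prove surjectivity and injectivity for string $M$. For surjectivity, I would start from any fixed string structure $(\mathcal{S}_0,\mathcal{A}_0,\sigma_0)$, which exists because $\mathrm{cc}(\mathbb{CS}_M)=\tfrac{1}{2}p_1(M)=0$; given a target string class $\xi$, the difference $\xi + \mathrm{dd}(\mathcal{S}_0)$ restricts trivially to the fibre and is therefore $\pi^{*}\kappa$ for a unique $\kappa$, and tensoring $\mathcal{S}_0$ with a bundle gerbe of class $-\kappa$ realizes $\xi$. For injectivity, suppose $\mathrm{dd}(\mathcal{S}_1)=\mathrm{dd}(\mathcal{S}_2)$; I would form the bundle gerbe $\mathcal{S}_1 \otimes \mathcal{S}_2^{*}$ over $FM$ and use $\mathcal{A}_1,\mathcal{A}_2$ together with $\sigma_1,\sigma_2$ to equip it with descent data over the simplicial manifold $FM^{[\bullet]}$. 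By the descent ($2$-stack) property of bundle gerbes this datum comes from a bundle gerbe $\mathcal{K}$ over $M$ with $\pi^{*}\mathcal{K} \cong \mathcal{S}_1 \otimes \mathcal{S}_2^{*}$ compatibly with all structure; then $\pi^{*}\mathrm{dd}(\mathcal{K})=0$, injectivity of $\pi^{*}$ gives $\mathrm{dd}(\mathcal{K})=0$, and so $(\mathcal{S}_1,\mathcal{A}_1,\sigma_1) \cong (\mathcal{S}_2,\mathcal{A}_2,\sigma_2)$.

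The step I expect to be the main obstacle is precisely this descent argument (equivalently, transitivity of the torsor action): one must verify that the quotient of two trivializations of $\mathbb{CS}_M$ is the pullback of a genuine bundle gerbe from $M$, and that the coherence data $\sigma_1,\sigma_2$, constrained by their compatibility with the associator $\mu$, assemble into valid descent data. This uses the gluing theorem for the $2$-stack of bundle gerbes and some careful diagram bookkeeping; by contrast, the fibrewise computation and the spectral-sequence input are routine.
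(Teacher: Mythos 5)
Your proposal is correct and follows essentially the same route as the paper's source: the paper does not reprove this proposition but quotes it from \cite{waldorf8} (Theorem 1.1.4 there), proving here only well-definedness as Lemma \ref{lem:stringclass}, and your fibrewise computation (pulling back $\mathcal{A}$ along $g \mapsto (p,pg)$, where $\diffr$ composes to the identity, $\pr_1$ to a constant map, and $\pr_2$ to the fibre inclusion) is precisely that lemma's proof. The remainder of your argument --- realizing both sides as $\h^3(M,\Z)$-torsors, via descent of $\mathcal{S}_1 \otimes \mathcal{S}_2^{*}$ along $FM \to M$ using the $2$-stack property of bundle gerbes on one side and the Serre spectral sequence of the $2$-connected fibre on the other, then concluding by sign-twisted equivariance --- is also the strategy of the cited proof, and you correctly single out the descent/gluing step, where the coherence data $\sigma_1,\sigma_2$ must be checked to assemble into descent data compatible with the trivialization structure, as the one substantive point.
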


Let us recall one  aspect of Proposition \ref{th:stringclasses}, namely the fact that the map is well-defined. This follows from the following lemma, which will be used later.

\begin{lemma}
\label{lem:stringclass}
Let $\mathcal{S}$ be a bundle gerbe over $FM$, and let $\mathcal{A}\maps  \diffr^{*}\gbas \otimes \pr_2^{*}\mathcal{S} \to \pr_1^{*}\mathcal{S}$ be an isomorphism between bundle gerbes over $FM^{[2]}$. Then, $-\mathrm{dd}(\mathcal{S})$ is a string class.
\end{lemma}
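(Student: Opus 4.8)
The plan is to verify directly the two properties that define a string class. Since $\mathcal{S}$ is a bundle gerbe over $FM$, its Dixmier-Douady class lies in $\h^3(FM,\Z)$, and hence so does $-\mathrm{dd}(\mathcal{S})$; thus the only substantial point is to show that $-\mathrm{dd}(\mathcal{S})$ restricts on each fibre of $FM \to M$ to the fixed generator $\generator \in \h^3(\spin n,\Z)$.

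The isomorphism $\mathcal{A}$ is the whole input. Taking Dixmier-Douady classes, the existence of $\mathcal{A}$ forces an equality of classes over $FM^{[2]}$, and using additivity of $\mathrm{dd}$ under tensor products, naturality under pullback, and $\mathrm{dd}(\gbas)=\generator$, I obtain
\[
\diffr^{*}\generator + \pr_2^{*}\mathrm{dd}(\mathcal{S}) = \pr_1^{*}\mathrm{dd}(\mathcal{S})
\]
in $\h^3(FM^{[2]},\Z)$, equivalently $\diffr^{*}\generator = \pr_1^{*}\mathrm{dd}(\mathcal{S}) - \pr_2^{*}\mathrm{dd}(\mathcal{S})$.

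The next step is to pull this identity back to a single fibre. Fix $p_0 \in FM$ lying over $m \in M$ and consider the embedding $\iota\maps \spin n \to FM^{[2]}$, $g \mapsto (p_0, p_0\cdot g)$. By the defining property $p\cdot\diffr(p,p')=p'$ of the difference map one has $\diffr \circ \iota = \id_{\spin n}$; moreover $\pr_1 \circ \iota$ is the constant map at $p_0$, while $\pr_2 \circ \iota = j$ is the inclusion $j\maps \spin n \cong F_m \hookrightarrow FM$ of the fibre through $p_0$. Pulling the displayed identity back along $\iota$, and using that pullback along a constant map annihilates classes of positive degree, I get
\[
\generator = - j^{*}\mathrm{dd}(\mathcal{S}) = j^{*}(-\mathrm{dd}(\mathcal{S}))\text{,}
\]
which is precisely the fibre condition.

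I do not expect a genuine obstacle: the argument is pure functoriality of the Dixmier-Douady class. The only points requiring minor care are (i) the direction convention for $\diffr$, which must be matched so that $\diffr\circ\iota=\id$ rather than $g\mapsto g^{-1}$, and (ii) the fact that the identification $F_m\cong\spin n$ depends on the choice of $p_0$, so a priori the restriction is determined only up to a translation of $\spin n$; but since $\spin n$ is connected every such translation is homotopic to the identity and acts trivially on $\h^3(\spin n,\Z)$, so the fibre restriction is independent of the choices and equals $\generator$. Hence $-\mathrm{dd}(\mathcal{S})$ is a string class.
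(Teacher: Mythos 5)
Your proposal is correct and follows essentially the same route as the paper: the paper pulls back the isomorphism $\mathcal{A}$ along the map $s_p\maps \spin n \to FM^{[2]}\maps g \mapsto (p,pg)$ (your $\iota$) and then takes Dixmier-Douady classes, using exactly the identities $\diffr \circ s_p = \id$, $\pr_2 \circ s_p = \iota_p$, and the triviality of the pullback along the constant map $\pr_1 \circ s_p$; your only deviation is taking classes over $FM^{[2]}$ first and pulling back second, which is equivalent by naturality.
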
  

\begin{proof}
Let $p\in FM$ be a point and $\iota_p\maps  \spin n \to FM\maps  g \mapsto pg$ be the associated inclusion of the structure group of $FM$ in the fibre of $p$. Consider the map $s_p\maps  \spin n \to FM^{[2]}\maps g\mapsto (p,pg)$, which satisfies $\pr_2 \circ s_p = \iota_p$ and $\diffr \circ s_p = \id$, and for which $c_p \df \pr_1 \circ s_p$ is the constant map with value $p$. We obtain an isomorphism
\begin{equation*}
s_p^{*}\mathcal{A} \maps  \gbas \otimes \iota_p^{*}\mathcal{S} \to c_p^{*}\mathcal{S}\text{.}
\end{equation*}
Since the pullback of a bundle gerbe along a constant map is trivial, it has vanishing Dixmier-Douady class, we get
\begin{equation*}
\generator + \iota_p^{*}\mathrm{dd}(\mathcal{S}) = 0\text{.}
\end{equation*}
Hence, $-\mathrm{dd}(\mathcal{S})$ is a string class. 
\end{proof}

Another nice feature of Definition \ref{def:stringstructure} is that it provides a basis to include \emph{string connections}. The following definition summarizes the relevant notions concerning connections on bundle 2-gerbes.

\begin{definition}
\label{def:connection2gerbe}
\label{def:compatibleconnection}
Let $\mathbb{G}=(Y,\pi,\mathcal{P},\mathcal{M},\mu)$ be a bundle 2-gerbe over $M$. 
\begin{enumerate}[(i)]
\item 
A \emph{connection} on  $\mathbb{G}$ consists of a 3-form $C \in \Omega^3(Y)$, and of a connection on the bundle gerbe $\mathcal{P}$ such that
\begin{equation*}
\pi_2^{*}C - \pi_1^{*}C = \mathrm{curv}(\mathcal{P})\text{,}
\end{equation*}
and such that  $\mathcal{M}$ and $\mu$ is connection-preserving.

\item
Suppose $\mathbb{G}$ is equipped with a connection, and  $\mathbb{T}=(\mathcal{S},\mathcal{A},\sigma)$ is a trivialization of $\mathbb{G}$. A \emph{compatible connection} on $\mathbb{T}$ is a connection on the bundle gerbe $\mathcal{S}$ such that $\mathcal{A}$ and $\sigma$ are connection-preserving.

\end{enumerate}
\end{definition}

The Chern-Simons 2-gerbe $\mathbb{CS}_P(\mathcal{G})$ associated to a principal $G$-bundle $P$ over $M$ and a multiplicative bundle gerbe $\mathcal{G}$ over $G$ can be equipped with a connection that only depends on two parameters: a connection on the bundle $P$, and a multiplicative connection on the bundle gerbe $\mathcal{G}$, as recalled in Section \ref{sec:fusionextensions}, such that the two differential forms $H = \mathrm{curv}(\mathcal{G})$ and $\rho$ are the canonical ones, see \erf{eq:H} and \erf{eq:rho}. In the following we recall this construction on the basis of \cite[Section 3.2]{waldorf5}. 
\begin{itemize}
\item 
The 3-form is the Chern-Simons 3-form $CS(A) \in \Omega^3(P)$ associated to $A$. 

\item
The connection on the bundle gerbe $\mathcal{P}=\diffr^{*}\mathcal{G}$ over $P^{[2]}$ is given by the connection on $\diffr^{*}\mathcal{G}$ shifted by the 2-form
\begin{equation*}
\omega \df-  \left \langle   \diffr^{*} \bar\theta \wedge \pr_1^{*}A  \right \rangle \in
\Omega^2(P^{[2]})\text{,}
\end{equation*}
i.e. we have $\mathcal{P} = \diffr^{*}\mathcal{G} \otimes \mathcal{I}_{\omega}$ as bundle gerbes with connection. The reason for the shift is that otherwise the required identity for connections (see Definition \ref{def:connection2gerbe}) would not be satisfied. 

\item
That the bundle 2-gerbe product $\diffr^{\prime*}\mathcal{M}$ is connection-preserving for the shifted connections follows from the identity 
\begin{equation}
\label{eq:identityrhoomega}
\diffr^{\prime*}\rho + \pr_{12}^{*} \omega - \pr_{13}^{*}\omega + \pr_{23}^{*}\omega = 0\text{.}
\end{equation}
The associator is just the pullback of the connection-preserving transformation $\alpha$, and so connection-preserving.
\end{itemize}

In the situation of a spin manifold $M$, the Chern-Simons 2-gerbe $\mathbb{CS}_M=\mathbb{CS}_{FM}(\gbas)$ carries a canonical connection, defined by the Levi-Cevita connection on $M$ and the canonical connection on  the basic bundle gerbe $\gbas$, see \cite[Theorem 1.2.1]{waldorf9}.

\begin{definition}[{{\cite[Definition 1.2.2]{waldorf8}}}]
Let $\mathbb{T}$ be a string structure. A \emph{string connection} on $\mathbb{T}$ is a connection  on  $\mathbb{T}$ that is compatible with the canonical connection  on $\mathbb{CS}_M$. 
\end{definition}

This concept of a string connection reproduces \cite[Theorem 1.2.3]{waldorf8} the original definition of Stolz and Teichner \cite{stolz1}, which was formulated in terms of trivializations of Chern-Simons theory. One important statement about string connections that we will need later is the following theorem.

\begin{theorem}[{{\cite[Theorem 1.3.4]{waldorf8}}}]
\label{th:stringconnection}
Every string structure admits a string connection.
\end{theorem}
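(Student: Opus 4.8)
The plan is to prove this as a general existence statement for compatible connections on trivializations of bundle 2-gerbes, the only special feature of $\mathbb{CS}_M$ being the canonical connection recalled before Definition \ref{def:connection2gerbe}. Write $\mathbb{G}=\mathbb{CS}_M$ with underlying submersion $\pi\maps FM \to M$, bundle gerbe $\mathcal{P}=\diffr^{*}\gbas$ carrying its shifted connection, connection-preserving product $\mathcal{M}$, and connection-preserving associator $\mu$; let $\mathbb{T}=(\mathcal{S},\mathcal{A},\sigma)$ be the given string structure. Since every bundle gerbe admits a connection, I may choose \emph{some} connection on the bundle gerbe $\mathcal{S}$ to begin with. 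With this choice the isomorphism $\mathcal{A}\maps \mathcal{P}\otimes \pr_2^{*}\mathcal{S} \to \pr_1^{*}\mathcal{S}$ over $FM^{[2]}$ will generally fail to be connection-preserving; its failure is recorded by a unique 2-form $\varepsilon\in\Omega^2(FM^{[2]})$, in the sense that $\mathcal{A}$ is connection-preserving as an isomorphism $\mathcal{P}\otimes\pr_2^{*}\mathcal{S}\to\pr_1^{*}\mathcal{S}\otimes\mathcal{I}_{\varepsilon}$. The whole argument is then an obstruction computation aimed at correcting the connection on $\mathcal{S}$ so as to make $\varepsilon$, and subsequently the defect of $\sigma$, vanish.

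Two elementary facts organize everything. First, the space of connections on $\mathcal{S}$ is affine: tensoring with $\mathcal{I}_{B}$ for $B\in\Omega^2(FM)$ shifts the curving of $\mathcal{S}$ and changes the defect of $\mathcal{A}$ by $\pr_1^{*}B-\pr_2^{*}B$. Second, the simplicial de Rham complex $\Omega^{p}(FM)\to\Omega^{p}(FM^{[2]})\to\Omega^{p}(FM^{[3]})\to\cdots$, with differential $\Delta=\sum_i(-1)^i\pr_i^{*}$, is exact in positive simplicial degree, a contracting homotopy being built from local sections of $\pi$ and a partition of unity on $M$. The key structural input is that the existence of the coherence transformation $\sigma$, together with the connection-preservation of $\mathcal{M}$, forces the two composite isomorphisms over $FM^{[3]}$ that $\sigma$ relates to have equal defect 2-forms; this is precisely the cocycle condition $\Delta\varepsilon=\pr_{12}^{*}\varepsilon-\pr_{13}^{*}\varepsilon+\pr_{23}^{*}\varepsilon=0$. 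Exactness then yields $B\in\Omega^2(FM)$ with $\pr_1^{*}B-\pr_2^{*}B=-\varepsilon$, and replacing the connection on $\mathcal{S}$ by its tensor with $\mathcal{I}_{B}$ makes $\mathcal{A}$ connection-preserving.

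With $\mathcal{A}$ now connection-preserving, the transformation $\sigma$ over $FM^{[3]}$ carries a residual defect, this time a 1-form $\eta\in\Omega^1(FM^{[3]})$; the compatibility of $\sigma$ with the connection-preserving associator $\mu$ forces $\eta$ to be $\Delta$-closed over $FM^{[4]}$. I would kill $\eta$ using the remaining gauge freedom of the connection on $\mathcal{S}$, namely its connection $1$-form data, which can be modified by a form pulled back from $FM$ without disturbing the curving condition already achieved — again invoking exactness of the simplicial complex one degree lower. The result is a connection on $\mathcal{S}$ for which both $\mathcal{A}$ and $\sigma$ are connection-preserving, i.e. a compatible connection on $\mathbb{T}$ in the sense of Definition \ref{def:connection2gerbe}; since $\mathbb{CS}_M$ carries its canonical connection, this is exactly a string connection.

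The hard part will be the bookkeeping behind the two cocycle claims: tracking precisely how the defect 2-form of $\mathcal{A}$ and the defect 1-form of $\sigma$ transform under the connection-preserving $\mathcal{M}$ and $\mu$ and under the trivialization's compatibility condition, and confirming that they land in $\ker\Delta$. A secondary technicality is making the ``adjust the connection on $\mathcal{S}$'' steps rigorous at the level of the internal surjective submersion of $\mathcal{S}$, where the curving and the bundle connection live on different fibre products; but these are routine once the affine structure of gerbe connections and the exactness of the fundamental complex are in hand.
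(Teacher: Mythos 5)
Your proposal cannot be compared against an in-paper argument, because the paper does not prove Theorem \ref{th:stringconnection} at all: it is imported verbatim from the cited reference (Waldorf, \emph{String connections and Chern-Simons theory}, Theorem 1.3.4). Judged on its own merits, your strategy --- pick any connection on $\mathcal{S}$, measure defect forms of $\mathcal{A}$ and $\sigma$, and remove them using the exactness of Murray's fundamental complex $\Omega^p(M) \to \Omega^p(FM)\to\Omega^p(FM^{[2]})\to\cdots$ --- is the right kind of argument and is in the spirit of the cited proof. But two of your steps are genuinely wrong as stated.

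First, the cocycle claim $\Delta\varepsilon=0$ is not justified and is false in general. To speak of $\mathcal{A}$ being connection-preserving at all, you must choose connection data on $\mathcal{A}$ itself (a $1$-isomorphism of bundle gerbes carries its own principal $\ueins$-bundle, which needs a connection); your proposal never introduces this choice, and once it is made, $\varepsilon$ is well defined only up to exact $2$-forms, since shifting that connection by the pullback of $\gamma\in\Omega^1(FM^{[2]})$ shifts $\varepsilon$ by $d\gamma$. More importantly, the existence of the \emph{plain} (not connection-preserving) transformation $\sigma$ forces only $\Delta\varepsilon=d\eta$, where $\eta\in\Omega^1(FM^{[3]})$ is the connection defect of $\sigma$ --- never $\Delta\varepsilon=0$. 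A plain $2$-isomorphism cannot detect exact discrepancies: the trivial line bundle with connection $d+\im\gamma$ gives a connection-preserving isomorphism $\mathcal{I}_0 \to \mathcal{I}_{d\gamma}$ which is plainly $2$-isomorphic to the identity although $d\gamma\neq0$. Hence your step ``exactness then yields $B$'' has nothing to apply to. Second, the gauge freedom you invoke to kill $\eta$ does not exist: if $W \to FM$ is the surjective submersion of $\mathcal{S}$, changing the connection $1$-form of $\mathcal{S}$'s bundle over $W^{[2]}$ by a form pulled back from $FM$ destroys compatibility with $\mathcal{S}$'s own bundle gerbe product, because the three-term alternating sum $\pr_{12}^{*}-\pr_{13}^{*}+\pr_{23}^{*}$ of a pullback from the base equals that pullback, not zero (only the two-term sum entering the curving condition vanishes).

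Both gaps close if you run the two corrections in the opposite order, with the connection on $\mathcal{A}$'s bundle as part of the data. The compatibility of $\sigma$ with the connection-preserving associator $\mu$ (Definition \ref{deftriv}) gives $\Delta\eta=0$ over $FM^{[4]}$ on the nose; exactness yields $\gamma\in\Omega^1(FM^{[2]})$ with $\Delta\gamma=\eta$, and shifting $\mathcal{A}$'s connection by $-\gamma$ makes $\sigma$ connection-preserving while changing $\varepsilon$ by $-d\gamma$. Since $d\eta=\Delta\varepsilon$, the new defect satisfies $\Delta\varepsilon=0$; now exactness yields $B\in\Omega^2(FM)$ with $\Delta B=\varepsilon$, and replacing $\mathcal{S}$ by $\mathcal{S}\otimes\mathcal{I}_{-B}$ (a curving shift, which leaves $\eta$ untouched) makes $\mathcal{A}$ connection-preserving as well, producing a compatible connection in the sense of Definition \ref{def:connection2gerbe}. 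With this reordering and the explicit choice of connection data on $\mathcal{A}$, your argument becomes a correct proof.
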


\subsection{Transgression}

\label{sec:transgression}
\label{sec:transgressionof2gerbes}

We define  transgression for (certain) bundle 2-gerbes and their trivializations. Let $\mathbb{G}$ be a bundle 2-gerbe with connection over $M$, consisting of a surjective submersion $\pi\maps Y \to M$, a curving $C \in \Omega^3(Y)$, a bundle gerbe $\mathcal{P}$ with connection over $Y^{[2]}$, and a connection-preserving bundle 2-gerbe product $\mathcal{M}$ with connection-preserving associator $\mu$.

The restriction we impose on the admissible bundle 2-gerbes is that their surjective submersion $\pi\maps Y \to M$ is \emph{loopable}, i.e., $L\pi\maps LY \to LY$ is again a surjective submersion. Then, we define the following  bundle gerbe $\tr_{\mathbb{G}}$ over $LM$. Its surjective submersion is $L\pi\maps LY \to LM$.
The principal $\ueins$-bundle over $LY^{[2]}$  is $\tr_{\mathcal{P}}$. Since transgression for bundle gerbes is functorial, natural and monoidal, the transgression $\tr_{\mathcal{M}}$ is a bundle gerbe product. It is associative since the associator $\mu$ of $\mathbb{G}$ transgresses to an equality.

\begin{proposition}
\label{prop:transgression2gerbesign}
On the level of characteristic classes, 
\begin{equation*}
\mathrm{dd}(\tr_{\mathbb{G}}) = - \tau(\mathrm{cc}(\mathbb{G}))\text{.}
\end{equation*}
\end{proposition}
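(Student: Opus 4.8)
The plan is to bootstrap the degree-one-lower identity \erf{eq:transgressionclasses} for bundle gerbes up to the present statement for bundle 2-gerbes. The guiding observation is that both characteristic classes involved are obtained by the same descent procedure from the characteristic class of the next-lower constituent object, and that transgression commutes with this procedure.

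First I would recall the descent description of the two classes. For the surjective submersion $\pi\maps Y \to M$ one has the simplicial manifold $Y^{[\bullet]}$ and the associated descent spectral sequence with $E_1^{p,q}=\h^q(Y^{[p+1]},\Z)$, whose first differential is the alternating sum of pullbacks along the face maps $Y^{[p+2]} \to Y^{[p+1]}$. The bundle gerbe product forces $\mathrm{c}_1(P) \in \h^2(Y^{[2]},\Z)$ to be a simplicial cocycle, and the Dixmier--Douady class $\mathrm{dd} \in \h^3(M,\Z)$ is precisely the class it determines under descent \cite{Murray}. In the very same fashion, for a bundle 2-gerbe $\mathbb{G}$ the product $\mathcal{M}$ forces $\mathrm{dd}(\mathcal{P}) \in \h^3(Y^{[2]},\Z)$ to be a simplicial cocycle, and $\mathrm{cc}(\mathbb{G}) \in \h^4(M,\Z)$ is the class it determines under descent \cite[Proposition 7.2]{stevenson2}. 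Denote by $D_{\pi}$ this descent operation, defined on simplicial cocycles, so that $D_{\pi}(\mathrm{dd}(\mathcal{P}))=\mathrm{cc}(\mathbb{G})$.

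Next I would pass to loop spaces. Since taking loops commutes with fibre products, $L(Y^{[p]})=(LY)^{[p]}$, so the simplicial manifold attached to $L\pi\maps LY \to LM$ is the looping of the one attached to $\pi$. By construction $\tr_{\mathbb{G}}$ has surjective submersion $L\pi$, principal $\ueins$-bundle $\tr_{\mathcal{P}}$ over $(LY)^{[2]}$, and bundle gerbe product $\tr_{\mathcal{M}}$; hence $\mathrm{dd}(\tr_{\mathbb{G}})=D_{L\pi}(\mathrm{c}_1(\tr_{\mathcal{P}}))$, where $D_{L\pi}$ is the descent operation for $L\pi$. The transgression homomorphism $\tau\maps \h^{q}(-,\Z) \to \h^{q-1}(L-,\Z)$, $x \mapsto \int_{S^1}\ev^{*}x$, is natural for smooth maps, hence commutes with every face map; consequently it carries simplicial cocycles to simplicial cocycles and intertwines the two descents, $D_{L\pi} \circ \tau = \tau \circ D_{\pi}$, lowering the cohomological degree by one.

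Finally I would combine these ingredients. Applying \erf{eq:transgressionclasses} to the bundle gerbe $\mathcal{P}$ over the base $Y^{[2]}$ gives $\mathrm{c}_1(\tr_{\mathcal{P}}) = -\tau(\mathrm{dd}(\mathcal{P}))$ in $\h^2((LY)^{[2]},\Z)$. Descending both sides then yields
\begin{equation*}
\mathrm{dd}(\tr_{\mathbb{G}}) = D_{L\pi}(\mathrm{c}_1(\tr_{\mathcal{P}})) = -D_{L\pi}(\tau(\mathrm{dd}(\mathcal{P}))) = -\tau(D_{\pi}(\mathrm{dd}(\mathcal{P}))) = -\tau(\mathrm{cc}(\mathbb{G}))\text{,}
\end{equation*}
which is the claim. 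I expect the main obstacle to be the intertwining relation $D_{L\pi} \circ \tau = \tau \circ D_{\pi}$: one must check that fibre integration $\int_{S^1}$ is genuinely compatible with the connecting homomorphisms of the two simplicial spectral sequences under the identification $L(Y^{[p]})=(LY)^{[p]}$, and confirm that the single surviving sign is exactly the one carried over from \erf{eq:transgressionclasses} rather than its negative. As a consistency check the corresponding identity of real (de Rham) classes can be seen at once, since $\mathrm{curv}(\tr_{\mathbb{G}})$ is the fibre integral over $S^1$ of the 2-gerbe curvature $\mathrm{curv}(\mathbb{G}) \in \Omega^4(M)$; the integral refinement, however, requires the descent argument.
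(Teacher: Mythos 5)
Your route is genuinely different from the paper's: the paper proves the identity only \emph{up to torsion}, by equipping $\tr_{\mathbb{G}}$ with a connection whose curving is $-\int_{S^1}\ev^{*}C$ and comparing curvatures --- exactly the de Rham argument that you relegate to a final \quot{consistency check} --- whereas you aim at the full integral statement. Unfortunately your integral argument has a genuine gap: the \quot{descent operation} $D_{\pi}$ does not exist as a map on cohomology classes. A $d_1$-cocycle $x\in\h^q(Y^{[2]},\Z)$ determines at best a class on the $E_2$-page of the descent spectral sequence, hence (if it survives) only an element of the subquotient $F^1\h^{q+1}(M,\Z)/F^2\h^{q+1}(M,\Z)$; it does not determine a class in $\h^{q+1}(M,\Z)$. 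The actual characteristic classes are pinned down by the geometric coherence data, not by the cohomology class of the constituent object, so $\mathrm{cc}(\mathbb{G})$ is simply not a function of $\mathrm{dd}(\mathcal{P})$. Concretely, let $\pi\maps Y\to M$ be the disjoint union of a good open cover, let $\mathcal{P}$ be the trivial bundle gerbe and $\mathcal{M}$ the identity isomorphism; an associator is then a $\ueins$-valued function on $Y^{[4]}$, i.e.\ a \v Cech 3-cochain, the pentagon axiom is the cocycle condition, and $\mathrm{cc}(\mathbb{G})$ is the class of that cocycle in $\h^4(M,\Z)$, which can be \emph{arbitrary} while $\mathrm{dd}(\mathcal{P})=0$ throughout. (The same phenomenon one degree lower shows that $\mathrm{dd}$ of a bundle gerbe is not a function of $\mathrm{c}_1$ of its constituent bundle: twist the product on the trivial bundle by a \v Cech 2-cocycle.) Consequently the middle terms in your displayed chain, $D_{L\pi}(\mathrm{c}_1(\tr_{\mathcal{P}}))$ and $D_{L\pi}(\tau(\mathrm{dd}(\mathcal{P})))=\tau(D_{\pi}(\mathrm{dd}(\mathcal{P})))$, are undefined, and the intertwining relation you flag as \quot{the main obstacle} cannot even be formulated at the level of classes.

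What does survive is the associated-graded part of your idea: since $\tau$ is natural and $L(Y^{[p]})=(LY)^{[p]}$, fibre integration over $S^1$ induces a morphism of descent spectral sequences of bidegree $(0,-1)$, so your argument proves the identity modulo $F^2\h^3(LM,\Z)$. But this filtration step is not torsion in general, so this is neither the full statement nor even the paper's up-to-torsion version. To repair the argument one must descend to the cochain level: represent $\mathbb{G}$ by a cocycle in the \v Cech--simplicial double complex, using the product $\mathcal{M}$ and the associator $\mu$ (this is how $\mathrm{cc}(\mathbb{G})$ is actually defined in \cite[Proposition 7.2]{stevenson2}), realize transgression as a chain map of such double complexes, and track cocycles rather than classes. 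That is genuine work --- essentially the kind of comparison carried out geometrically in \cite{Nikolausa} --- and not a formal consequence of naturality; alternatively, one can accept the paper's weaker curvature argument, which suffices for everything the article uses.
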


\begin{proof}
The statement has no actual relevance  for this article, so that it may be enough to verify it up to torsion. This can be done by identifying a connection on the bundle gerbe $\tr_{\mathbb{G}}$: it comes from the  connections  on the transgressed bundles that we have ignored in the construction. Because of the sign in \erf{eq:transgressionclasses}, the  curving on $\tr_{\mathbb{G}}$ is $- \int_{S^1} \ev^{*}C$. Hence, the curvature of $\tr_{\mathbb{G}}$ is minus the transgression of the curvature of $\mathbb{G}$.
\end{proof}

As $\tr_{\mathcal{P}}$ is a fusion bundle and $\tr_{\mathcal{M}}$ is fusion-preserving, it is evident that the bundle gerbe $\tr_{\mathbb{G}}$ is equipped with an internal fusion product  (Definition \ref{def:internalfusionproduct}), which we denote by $\lambda_{\mathbb{G}}$.

\label{sec:transgressionoftrivializations}

If $\mathbb{T}=(\mathcal{S},\mathcal{A},\sigma)$ is a trivialization of $\mathbb{G}$ with compatible connection, we define a trivialization $\tr_{\mathbb{T}}$ of $\tr_{\mathbb{G}}$. The bundle gerbe $\mathcal{S}$ with connection transgresses to a principal $\ueins$-bundle $\tr_{\mathcal{S}}$ over $LY$. Since transgression is functorial, natural, and monoidal, the 1-morphism
\begin{equation*}
\mathcal{A}\maps  \mathcal{P} \otimes \pi_2^{*}\mathcal{S} \to \pi_1^{*}\mathcal{S}
\end{equation*}
over $Y^{[2]}$ transgresses to the required bundle morphism over $LY^{[2]}$, and the 2-isomorphism $\sigma$ implies the compatibility condition. 

Since $\tr_{\mathcal{S}}$ is a fusion bundle, the trivialization $\tr_{\mathbb{T}}$ carries a fusion product,  and since $\tr_{\mathcal{A}}$ is fusion-preserving, it is compatible with the internal fusion product $\lambda_{\mathbb{G}}$ of $\tr_{\mathbb{G}}$.
We obtain a functor
\begin{equation}
\label{eq:transgressionfunctor}
\tr \maps  \hc 1 \trivcon {\mathbb{G}} \to \trivfus{\tr_{\mathbb{G}},\lambda_{\mathbb{G}}}\text{.}
\end{equation} 
In the following we transgress the Chern-Simons 2-gerbe and use the functor
\erf{eq:transgressionfunctor} to transgress (geometric) string structures.

\begin{theorem}[{{\cite[Proposition 6.2.1]{Nikolausa}}}]
\label{th:liftingtransgression}
Let $\mathcal{G}$ be a multiplicative bundle gerbe with connection over a Lie group $G$, and let $P$ be a principal $G$-bundle over $M$ with connection. Let $\mathbb{CS}_P(\mathcal{G})$  be the associated Chern-Simons 2-gerbe with connection. Let $\mathcal{G}_{LP}$ be the lifting gerbe associated to the problem of lifting the structure group of $LP$ from $LG$ to the central extension $\tr_{\mathcal{G}}$. Then, there is a canonical isomorphism
\begin{equation*}
\varphi \maps  \tr_{\mathbb{CS}_P(\mathcal{G})} \to \mathcal{G}_{LP}
\end{equation*}
between bundle gerbes over $LM$.
\end{theorem}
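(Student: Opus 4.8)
The plan is to construct $\varphi$ from the \emph{naturality of transgression} together with the canonical trivializations of transgressed trivial gerbes already exploited in Section \ref{sec:fusionextensions}, comparing the two bundle gerbes componentwise. Both gerbes have the same surjective submersion: that of $\cs P{\mathcal{G}}$ is $\pi\maps P \to M$, which loops to $L\pi\maps LP \to LM$, and this is exactly the surjective submersion of the lifting gerbe $\mathcal{G}_{LP}$. As looping commutes with fibre products over $M$, we have $LP^{[k]} = L(P^{[k]})$, so both gerbes live over the same simplicial manifold, and it remains to identify the $\ueins$-bundles over $LP^{[2]}$ and the products over $LP^{[3]}$.

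For the Chern-Simons 2-gerbe with its canonical connection the bundle gerbe over $P^{[2]}$ is $\mathcal{P} = \diffr^{*}\mathcal{G}\otimes\mathcal{I}_{\omega}$, so monoidality of transgression gives $\tr_{\mathcal{P}} \cong \tr_{\diffr^{*}\mathcal{G}} \otimes \tr_{\mathcal{I}_{\omega}}$. Naturality of transgression along the looped map $L\diffr\maps LP^{[2]} \to LG$ yields a canonical isomorphism $\tr_{\diffr^{*}\mathcal{G}} \cong L\diffr^{*}\tr_{\mathcal{G}}$, and by construction (Theorem \ref{th:fusionextension} and Section \ref{sec:fusionextensions}) the central extension $\inf G$ is $\tr_{\mathcal{G}}$, so $L\diffr^{*}\tr_{\mathcal{G}} = Q$ is precisely the $\ueins$-bundle of $\mathcal{G}_{LP}$. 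The remaining factor is killed by the canonical trivialization $t_{\omega}\maps \tr_{\mathcal{I}_{\omega}} \to \trivlin$ of \erf{eq:cantriv}, which exists for any 2-form. Composing these isomorphisms defines the desired bundle isomorphism $\varphi\maps \tr_{\mathcal{P}} \to Q$ over $LP^{[2]}$.

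The heart of the proof is to check that $\varphi$ intertwines the two bundle gerbe products. The product of $\tr_{\cs P{\mathcal{G}}}$ is $\tr_{\diffr^{\prime*}\mathcal{M}}$, while that of $\mathcal{G}_{LP}$ is the map $\mu$ of \erf{bgprod}, which is built from the group law $\tilde m$ of $\inf G$ and hence, by \erf{eq:transmult}, is $\tr_{\mathcal{M}}$ post-composed with $t_{\rho}$. Using $\pr_1 \circ \diffr' = \diffr \circ \pr_{12}$, $\pr_2 \circ \diffr' = \diffr \circ \pr_{23}$ and $m \circ \diffr' = \diffr \circ \pr_{13}$, naturality identifies $\tr_{\diffr^{\prime*}\mathcal{M}}$ with $L\diffr^{\prime*}\tr_{\mathcal{M}}$. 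Collecting the curving 2-forms produced by transgressing $\mathcal{P} = \diffr^{*}\mathcal{G}\otimes\mathcal{I}_{\omega}$ along the three face maps, the comparison of $\tr_{\diffr^{\prime*}\mathcal{M}}$ with $\mu$ reduces to the coincidence of the canonical trivialization attached to $\pr_{12}^{*}\omega + \pr_{23}^{*}\omega$ with the one attached to $\pr_{13}^{*}\omega + \diffr^{\prime*}\rho$; this is exactly the identity \erf{eq:identityrhoomega} together with the naturality $L\diffr^{\prime*}t_{\rho} = t_{\diffr^{\prime*}\rho}$ of the canonical trivializations. I expect this 2-form bookkeeping to be the main obstacle.

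Finally, associativity and the coherence over $LP^{[4]}$ are automatic. The associator of $\cs P{\mathcal{G}}$ is the pullback $\diffr^{\prime\prime*}\alpha$ of the associator of the multiplicative gerbe $\mathcal{G}$, which transgresses to an equality, exactly as in the construction of $\tr_{\mathbb{G}}$; on the other side the associativity of $\tilde m$, and hence of $\mu$, arises from the very same transgressed $\alpha$, as in \cite[Theorem 3.1.7]{waldorf5}. Thus $\varphi$ respects all the structure and is an isomorphism of bundle gerbes over $LM$; canonicity is manifest, since every step uses only the naturality of transgression and the canonical trivializations $t_{\omega}$ and $t_{\rho}$.
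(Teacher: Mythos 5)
Your proposal is correct and follows essentially the same route as the paper's own construction: the same surjective submersion $LP \to LM$, the same identification of the $\ueins$-bundles over $LP^{[2]}$ via monoidality and naturality of transgression composed with the canonical trivialization $t_{\omega}$, and the same reduction of the compatibility of the two bundle gerbe products to the identity \erf{eq:identityrhoomega} together with naturality of the canonical trivializations. The only (harmless) addition is your explicit remark about associativity over $LP^{[4]}$, which the paper leaves implicit since each product is already associative as part of the respective bundle gerbe structure.
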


The claimed isomorphism has been constructed in \cite[Proposition 6.2.1]{Nikolausa}; since we need it explicitly below we recall this construction.
\begin{itemize}
\item 
Both bundle gerbes have the same surjective submersion, $LP \to LM$. 

\item
Next we look at the principal $\ueins$-bundles over $LP^{[2]}.$   The one of $\tr_{\mathbb{CS}_P(\mathcal{G})}$ is $\tr_{\mathcal{P}}$, where $\mathcal{P} \eq \diffr^{*}\mathcal{G} \otimes \mathcal{I}_{\omega}$, while the one   of  $\mathcal{G}_{LP}$ is $P \df L\diffr^{*}\tr_{\mathcal{G}}$. Naturality of transgression and the canonical trivialization $t_{\omega}$ of $\tr_{\mathcal{I}_{\omega}}$  provide an isomorphism
\begin{equation*}
\alxydim{@C=2cm}{\tr_{\mathcal{P}} \cong L\diffr^{*}\tr_{\mathcal{G}} \otimes \tr_{\mathcal{I}_{\omega}} \ar[r]^-{\id \otimes t_{\omega}} & L\diffr^{*}\tr_{\mathcal{G}}=P} 
\end{equation*} 
between principal $\ueins$-bundles over $LP^{[2]}$ which gives our map $\varphi$. 

\item
Finally we have to look at the bundle gerbe products.
We claim that the diagram
\begin{equation*}
\alxydim{@=1.6cm}{\pr_{12}^{*}\tr_{\mathcal{P}} \otimes \pr_{23}^{*}\tr_{\mathcal{P}} \ar[d]_{\pr_{12}^{*}\varphi \otimes \pr_{23}^{*}\varphi} \ar[rr]^-{\tr_{\mathcal{M}'}} && \pr_{13}^{*}\tr_{\mathcal{P}} \ar[d]^{\pr_{13}^{*}\varphi} \\ \pr_{12}^{*}P \otimes \pr_{23}^{*}P \ar[rr]_-{\tilde m} && \pr_{13}^{*}P}
\end{equation*}
is commutative, which shows that the isomorphism $\varphi$ exchanges the bundle gerbe products. Indeed, substituting the  definitions gives the diagram
\begin{equation*}
\alxydim{@=1.6cm}{\ar[d]_{\id \otimes t_{\pr_{12}^{*}\omega+\pr_{23}^{*}\omega}}   \tr_{\diffr^{\prime *}(\pr_{1}^{*}\mathcal{G} \otimes \pr_2^{*}\mathcal{G})}
\otimes \tr_{\mathcal{I}_{\pr_{12}^{*}\omega + \pr_{23}^{*}\omega}}   \ar[rr]^-{\tr_{\diffr^{\prime *}\mathcal{M} \otimes
\id}} && \tr_{\diffr^{\prime *}m^{*}\mathcal{G}} \otimes \tr_{\mathcal{I}_{\pr_{13}^{*}\omega}} \ar[d]^{\id \otimes t_{\pr_{13}^{*}\omega}} 
\\
L\diffr'^{*}\tr_{\pr_{1}^{*}\mathcal{G} \otimes \pr_2^{*}\mathcal{G}} \ar[r]_-{L\diffr^{\prime *}\tr_{\mathcal{M}}} & L\diffr^{\prime *}(\tr_{m^{*}\mathcal{G} \otimes \mathcal{I}_{\rho}}) \ar[r]_-{\id \otimes t_{\diffr^{\prime*}\rho}}  & L\diffr^{\prime *}\tr_{m^{*}\mathcal{G}}}
\end{equation*}
which is commutative due to the identity \erf{eq:identityrhoomega}. 
\end{itemize}
Summarizing, we have constructed an isomorphism $\varphi\maps  \tr_{\mathbb{CS}_P(\mathcal{G})} \to \mathcal{G}_{LP}$ of bundle gerbes over $LM$.

In the situation of a spin manifold $M$ we obtain an isomorphism
\begin{equation*}
\varphi: \tr_{\mathbb{CS}_M} \to \mathcal{S}_{LM}
\end{equation*}
between the Chern-Simons 2-gerbe $\mathbb{CS}_M \eq \mathbb{CS}_{FM}(\gbas)$ and the spin lifting gerbe $\mathcal{S}_{LM}$, see Example \ref{ex:spinliftinggerbe}. With Proposition \ref{prop:transgression2gerbesign}, we obtain  on the level of characteristic classes:
\begin{equation*}
\textstyle\tau(\frac{1}{2}p_1(M))=\lambda_{LM}\text{.}
\end{equation*}
Next we come to a new, crucial property of the isomorphism $\varphi$.

\begin{proposition}
\label{prop:fusionpreserving}
The  isomorphism $\varphi$ of Theorem \ref{th:liftingtransgression} respects the internal fusion products.
\end{proposition}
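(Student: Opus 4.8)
The plan is to exploit the explicit description of $\varphi$ recalled just above the statement and to verify that it is fusion-preserving factor by factor. By construction, the internal fusion product $\lambda_{\mathbb{CS}_P(\mathcal{G})}$ on $\tr_{\mathbb{CS}_P(\mathcal{G})}$ is the transgression fusion product carried by the bundle $\tr_{\mathcal{P}}$ over $LP^{[2]}$, where $\mathcal{P} \eq \diffr^{*}\mathcal{G} \otimes \mathcal{I}_{\omega}$; and the internal fusion product $\lambda_{LP}$ on $\mathcal{G}_{LP}$ is the fusion product on $L\diffr^{*}\tr_{\mathcal{G}}$ obtained by pulling back the fusion bundle $\tr_{\mathcal{G}}$ along the looped difference map $L\diffr$, as in \erf{eq:fusionpullback}. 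Recalling that $\varphi$ factors as
\begin{equation*}
\tr_{\mathcal{P}} \;\cong\; \tr_{\diffr^{*}\mathcal{G}} \otimes \tr_{\mathcal{I}_{\omega}} \;\cong\; L\diffr^{*}\tr_{\mathcal{G}} \otimes \tr_{\mathcal{I}_{\omega}} \;\cong\; L\diffr^{*}\tr_{\mathcal{G}}\text{,}
\end{equation*}
where the three isomorphisms are, respectively, the monoidality of transgression, the naturality of transgression, and $\id \otimes t_{\omega}$, it suffices to show that each of these three maps respects the relevant fusion products.

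Two of the three factors are handled by results already invoked in the construction of $\tr_{\mathcal{G}}$ in Section \ref{sec:fusionextensions}. The transgression functor into fusion bundles is monoidal in a fusion-preserving way, so the first isomorphism preserves fusion products \cite[Section 4.2]{waldorf10}; and the canonical trivialization $t_{\omega}$ of $\tr_{\mathcal{I}_{\omega}}$ is fusion-preserving by \cite[Lemma 3.6]{waldorf13} — exactly as was used to establish that \erf{eq:transmult} is fusion-preserving in the proof of Theorem \ref{th:fusionextension}. Hence the first and third isomorphisms pose no difficulty.

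The remaining, and main, point is the naturality isomorphism $\tr_{\diffr^{*}\mathcal{G}} \cong L\diffr^{*}\tr_{\mathcal{G}}$: I must show that the transgression fusion product on $\tr_{\diffr^{*}\mathcal{G}}$ agrees, under this identification, with the pullback along $L\diffr$ of the transgression fusion product on $\tr_{\mathcal{G}}$. This is precisely the statement that transgression to fusion bundles is natural with respect to looped smooth maps in a fusion-preserving manner. I would verify it directly from the defining relation \erf{eq:fusiondef}: for a triple of paths in $P^{[2]}$ with common endpoints, the associated loops $\gamma_i \lop \gamma_j$ are carried by $L\diffr$ to the loops $\diffr(\gamma_i) \lop \diffr(\gamma_j)$ in $G$; trivializations of $(\diffr^{*}\mathcal{G})|_{\gamma_i\lop\gamma_j}$ correspond canonically to trivializations of $\mathcal{G}$ over their images; and the half-circle $2$-isomorphisms $\phi_1,\phi_2,\phi_3$ together with the cocycle condition $\phi_1 \eq \phi_3 \circ \phi_2$ that characterize \erf{eq:fusiondef} transfer bijectively across the identification, so the two fusion products coincide. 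Once this naturality is in place, $\varphi$ is a composite of three fusion-preserving isomorphisms and is therefore itself fusion-preserving, proving the claim. The only genuine obstacle is this naturality step; it is, however, an intrinsic feature of the transgression-to-fusion-bundles construction of \cite{waldorf10} and requires nothing beyond unwinding \erf{eq:fusiondef}.
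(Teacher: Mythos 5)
Your proposal is correct and follows essentially the same route as the paper: the paper's proof likewise identifies the two internal fusion products as $\lambda_{\mathcal{P}}$ on $\tr_{\mathcal{P}}$ and the $L\diffr$-pullback of $\lambda_{\mathcal{G}}$, cites \cite[Lemma 3.6]{waldorf13} for the fusion-preservation of $t_{\omega}$, and concludes that $\varphi$ is a composite of fusion-preserving isomorphisms. The only difference is that you spell out the naturality step $\tr_{\diffr^{*}\mathcal{G}} \cong L\diffr^{*}\tr_{\mathcal{G}}$ via \erf{eq:fusiondef}, which the paper leaves implicit as part of the established framework that transgression is natural with respect to looped maps as a functor into fusion bundles.
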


\begin{proof}
We recall that the internal fusion product of $\tr_{\mathbb{CS}_P}(\mathcal{G})$ is the fusion product $\lambda_{\mathcal{P}}$ on the $\ueins$-bundle $\tr_{\mathcal{P}}$, while the fusion product of $\mathcal{G}_{LP}$ is the pullback of the fusion product $\lambda_{\mathcal{G}}$ of $\tr_{\mathcal{G}}$ along $L\diffr$. By  \cite[Lemma 3.6]{waldorf13} the trivialization $t_{\omega}$ is fusion-preserving. Thus, $\varphi$ is a composition of fusion-preserving isomorphisms, and hence fusion-preserving.
\end{proof}

\setsecnumdepth 1

\section{Proof of Theorem \ref{th:main}}

\label{sec:conclusion}

In this section we prove the main theorem of this article: the assertion that $M$ is string if and only if $LM$ is fusion spin.  
By Corollary \ref{co:fusionspinlifting}, $LM$ is fusion spin if and only if the spin lifting gerbe $\mathcal{S}_{LM}$ has a trivialization with compatible fusion product.

Suppose first that $M$ is string, so that there exists a string structure $\mathbb{T}$ (Definition \ref{def:stringstructure}). By Theorem \ref{th:stringconnection} $\mathbb{T}$ admits a string connection, together giving a trivialization of $\mathbb{CS}_M$ with compatible connection.  Its transgression is a trivialization $\tr_{\mathbb{T}}$ of $\tr_{\mathbb{CS}_M}$ with compatible fusion product, see Section \ref{sec:transgression}. Since the isomorphism $\tr_{\mathbb{CS}_M} \cong \mathcal{S}_{LM}$  of Theorem \ref{th:liftingtransgression} preserves the internal fusion products (Proposition \ref{prop:fusionpreserving}), $\tr_{\mathbb{T}}$ induces  a trivialization of  $\mathcal{S}_{LM}$ with compatible fusion product.

Conversely, suppose $\mathcal{S}_{LM}$ has a trivialization $(T,\kappa)$ with compatible fusion product $\lambda_T$. Let $p\in FM$ be a point. We may assume that $M$ is connected, otherwise we proceed with each connected component of $M$ separately. Then, $FM$ is also connected. Thus we have a well-defined regression functor
\begin{equation*}
\un_p\maps  \ufusbun {LFM} \to  \ugrb {FM}
\end{equation*}
and obtain a bundle gerbe $\mathcal{S} \df \un_p(T,\lambda_T)$ over $FM$. In $FM^{[2]}$ we choose the base point $(p,p)$, so that both projections $\pr_1,\pr_2\maps  FM^{[2]} \to FM$ are base point-preserving. Now, the fusion-preserving bundle morphism $\kappa$ over $LFM^{[2]}$ regresses to an isomorphism
\begin{equation*}
\un_{(p,p)}(P \otimes \pr_2^{*}T) \to \un_{(p,p)}(\pr_1^{*}T) 
\end{equation*}
between bundle gerbes over $FM^{[2]}$, where $P$ is the principal $\ueins$-bundle of the spin lifting gerbe. Going through its construction using the model $\lspinhat n = \tr_{\gbas}$, we find that   $P =L\diffr^{*}\tr_{\gbas}$. Then we use that there is a (canonical) isomorphism $\un_1(\tr_{\gbas}) \cong \mathcal{G}_{bas}$ (see Theorem \ref{th:transgressionregression}).  We get an isomorphism
\begin{equation*}
\un_{(p,p)}(P) = \un_{(p,p)}(L\diffr^{*}\tr_{\gbas})= \diffr^{*}\un_1(\gbas)\cong \diffr^{*}\gbas\text{.}
\end{equation*}
Using that regression is monoidal and functorial (see Lemma \ref{lem:regprop}), we end up with an isomorphism
\begin{equation*}
\mathcal{A}\maps \diffr^{*}\gbas \otimes \pr_2^{*}\mathcal{S} \to \pr_1^{*}\mathcal{S}\text{.}
\end{equation*}
By Lemma \ref{lem:stringclass}  $-\mathrm{dd}(\mathcal{S})$ is a string class; thus, $M$ is string.

\kobib{../../bibliothek/tex}

\end{document}